\documentclass[reqno,11pt]{amsart}
\textwidth=14.5cm \oddsidemargin=1cm \evensidemargin=1cm
\usepackage{amsmath}
\usepackage{amsxtra}
\usepackage{amscd}
\usepackage{amsthm}
\usepackage{amsfonts}
\usepackage{amssymb}
\usepackage{eucal}
\usepackage{scalerel}
\usepackage{verbatim}
\usepackage[matrix,arrow,curve]{xy}

\usepackage{a4wide}

\usepackage[T1]{fontenc}

\usepackage{xr-hyper}
\usepackage[
pdftex,
bookmarks=false,
colorlinks=true,
debug=true,
pdfnewwindow=true]{hyperref}


\theoremstyle{plain}
\newtheorem{Thm}[equation]{Theorem}
\newtheorem{Cor}[equation]{Corollary}
\newtheorem{Lem}[equation]{Lemma}
\newtheorem{Prop}[equation]{Proposition}
\newtheorem{Conj}[equation]{Conjecture}

\theoremstyle{definition}
\newtheorem{Def}[equation]{Definition}

\theoremstyle{remark}

\newtheorem{Rem}[equation]{Remark}

\errorcontextlines=0
\numberwithin{equation}{section}
\renewcommand{\rm}{\normalshape}

\newif\ifShowLabels
\ShowLabelstrue
\newdimen\theight
\def\TeXref#1{%
    \leavevmode\vadjust{\setbox0=\hbox{{\tt
        \quad\quad  {\small \rm #1}}}%
    \theight=\ht0
    \advance\theight by \lineskip
    \kern -\theight \vbox to
    \theight{\rightline{\rlap{\box0}}%
    \vss}%
    }}%

\ShowLabelsfalse



\newenvironment{thm}[1]%
    { \begin{Thm} \label{T:#1}  \ifShowLabels \TeXref{T:#1} \fi }%
    { \end{Thm} }

\renewcommand{\th}[1]{\begin{thm}{#1} \sl }
\renewcommand{\eth}{\end{thm} }

\newenvironment{lemma}[1]%
    { \begin{Lem} \label{L:#1}  \ifShowLabels \TeXref{L:#1} \fi }%
    { \end{Lem} }
\newcommand{\lem}[1]{\begin{lemma}{#1} \sl}
\newcommand{\elem}{\end{lemma}}

\newenvironment{propos}[1]%
    { \begin{Prop} \label{P:#1}  \ifShowLabels \TeXref{P:#1} \fi }%
    { \end{Prop} }
\newcommand{\prop}[1]{\begin{propos}{#1}\sl }
\newcommand{\eprop}{\end{propos}}

\newenvironment{corol}[1]%
    { \begin{Cor} \label{C:#1}  \ifShowLabels \TeXref{C:#1} \fi }%
    { \end{Cor} }
\newcommand{\cor}[1]{\begin{corol}{#1} \sl }
\newcommand{\ecor}{\end{corol}}

\newenvironment{defeni}[1]%
    { \begin{Def} \label{D:#1}  \ifShowLabels \TeXref{D:#1} \fi }%
    { \end{Def} }
\newcommand{\defe}[1]{\begin{defeni}{#1} \sl }
\newcommand{\edefe}{\end{defeni}}

\newenvironment{remark}[1]%
    { \begin{Rem} \label{R:#1}  \ifShowLabels \TeXref{R:#1} \fi }%
    { \end{Rem} }
\newcommand{\rem}[1]{\begin{remark}{#1}}
\newcommand{\erem}{\end{remark}}

\newenvironment{conjec}[1]%
    { \begin{Conj} \label{Co:#1}  \ifShowLabels \TeXref{Co:#1} \fi }%
    { \end{Conj} }
\renewcommand{\conj}[1]{\begin{conjec}{#1} \sl }
\newcommand{\econj}{\end{conjec}}

\newcommand{\eq}[1]%
    { \ifShowLabels \TeXref{E:#1} \fi
       \begin{equation} \label{E:#1} }
\newcommand{\eeq}{ \end{equation} }

\newcommand{\prf}{ \begin{proof} }
\newcommand{\epr}{ \end{proof} }

\usepackage[dvipsnames]{xcolor}


\newcommand\nc{\newcommand}
\nc{\unl}{\underline}
\nc{\ol}{\overline}
\nc{\on}{\operatorname}

\nc{\BA}{{\mathbb{A}}}
\nc{\BC}{{\mathbb{C}}}
\nc{\BD}{{\mathbb{D}}}
\nc{\BF}{{\mathbb{F}}}
\nc{\BG}{{\mathbb{G}}}
\nc{\BM}{{\mathbb{M}}}
\nc{\BN}{{\mathbb{N}}}
\nc{\BO}{{\mathbb{O}}}
\nc{\BQ}{{\mathbb{Q}}}
\nc{\BP}{{\mathbb{P}}}
\nc{\BR}{{\mathbb{R}}}
\nc{\BZ}{{\mathbb{Z}}}
\nc{\BS}{{\mathbb{S}}}
\nc{\BK}{{\mathbb{K}}}
\nc{\BW}{{\mathbb{W}}}

\nc{\CA}{{\mathcal{A}}} \nc{\CB}{{\mathcal{B}}} \nc{\CalC}{{\mathcal
C}} \nc{\CalD}{{\mathcal D}} \nc{\CE}{{\mathcal{E}}}
\nc{\CF}{{\mathcal{F}}} \nc{\CG}{{\mathcal{G}}}
\nc{\CH}{{\mathcal{H}}} \nc{\CI}{{\mathcal{I}}}
\nc{\CK}{{\mathcal{K}}} \nc{\CL}{{\mathcal{L}}}
\nc{\CM}{{\mathcal{M}}} \nc{\CN}{{\mathcal{N}}}
\nc{\CO}{{\mathcal{O}}} \nc{\CP}{{\mathcal{P}}}
\nc{\CQ}{{\mathcal{Q}}} \nc{\CR}{{\mathcal{R}}}
\nc{\CS}{{\mathcal{S}}} \nc{\CT}{{\mathcal{T}}}
\nc{\CU}{{\mathcal{U}}} \nc{\CV}{{\mathcal{V}}}
\nc{\CW}{{\mathcal{W}}} \nc{\CX}{{\mathcal{X}}}
\nc{\CY}{{\mathcal{Y}}} \nc{\CZ}{{\mathcal{Z}}}

\nc{\fa}{{\mathfrak{a}}}
\nc{\fb}{{\mathfrak{b}}}
\nc{\fg}{{\mathfrak{g}}}
\nc{\fgl}{{\mathfrak{gl}}}
\nc{\fh}{{\mathfrak{h}}}
\nc{\fj}{{\mathfrak{j}}}
\nc{\fl}{{\mathfrak{l}}}
\nc{\fm}{{\mathfrak{m}}}
\nc{\fn}{{\mathfrak{n}}}
\nc{\fu}{{\mathfrak{u}}}
\nc{\fp}{{\mathfrak{p}}}
\nc{\frr}{{\mathfrak{r}}}
\nc{\fs}{{\mathfrak{s}}}
\nc{\ft}{{\mathfrak{t}}}
\nc{\fw}{{\mathfrak{w}}}
\nc{\fz}{{\mathfrak{z}}}

\nc{\fA}{{\mathfrak{A}}}
\nc{\fB}{{\mathfrak{B}}}
\nc{\fD}{{\mathfrak{D}}}
\nc{\fE}{{\mathfrak{E}}}
\nc{\fF}{{\mathfrak{F}}}
\nc{\fG}{{\mathfrak{G}}}
\nc{\fI}{{\mathfrak{I}}}
\nc{\fJ}{{\mathfrak{J}}}
\nc{\fK}{{\mathfrak{K}}}
\nc{\fL}{{\mathfrak{L}}}
\nc{\fM}{{\mathfrak{M}}}
\nc{\fN}{{\mathfrak{N}}}
\nc{\frP}{{\mathfrak{P}}}
\nc{\fQ}{{\mathfrak Q}}
\nc{\fR}{{\mathfrak R}}
\nc{\fS}{{\mathfrak S}}
\nc{\fT}{{\mathfrak{T}}}
\nc{\fU}{{\mathfrak{U}}}
\nc{\fW}{{\mathfrak{W}}}
\nc{\fY}{{\mathfrak{Y}}}
\nc{\fZ}{{\mathfrak{Z}}}

\nc{\ba}{{\mathbf{a}}}
\nc{\bb}{{\mathbf{b}}}
\nc{\bc}{{\mathbf{c}}}
\nc{\bd}{{\mathbf{d}}}
\nc{\be}{{\mathbf{e}}}
\nc{\bi}{{\mathbf{i}}}
\nc{\bj}{{\mathbf{j}}}
\nc{\bn}{{\mathbf{n}}}
\nc{\bp}{{\mathbf{p}}}
\nc{\bq}{{\mathbf{q}}}
\nc{\bu}{{\mathbf{u}}}
\nc{\bv}{{\mathbf{v}}}
\nc{\bw}{{\mathbf{w}}}
\nc{\bx}{{\mathbf{x}}}
\nc{\by}{{\mathbf{y}}}
\nc{\bz}{{\mathbf{z}}}

\nc{\bA}{{\mathbf{A}}}
\nc{\bB}{{\mathbf{B}}}
\nc{\bC}{{\mathbf{C}}}
\nc{\bD}{{\mathbf{D}}}
\nc{\bE}{{\mathbf{E}}}
\nc{\bI}{{\mathbf{I}}}
\nc{\bK}{{\mathbf{K}}}
\nc{\bH}{{\mathbf{H}}}
\nc{\bM}{{\mathbf{M}}}
\nc{\bN}{{\mathbf{N}}}
\nc{\bO}{{\mathbf{O}}}
\nc{\bQ}{{\mathbf Q}}
\nc{\bS}{{\mathbf{S}}}
\nc{\bT}{{\mathbf{T}}}
\nc{\bV}{{\mathbf{V}}}
\nc{\bW}{{\mathbf{W}}}
\nc{\bX}{{\mathbf{X}}}
\nc{\bP}{{\mathbf{P}}}
\nc{\bY}{{\mathbf{Y}}}
\nc{\bZ}{{\mathbf{Z}}}

\nc{\sA}{{\mathsf{A}}}
\nc{\sB}{{\mathsf{B}}}
\nc{\sC}{{\mathsf{C}}}
\nc{\sD}{{\mathsf{D}}}
\nc{\sF}{{\mathsf{F}}}
\nc{\sK}{{\mathsf{K}}}
\nc{\sM}{{\mathsf{M}}}
\nc{\sO}{{\mathsf{O}}}
\nc{\sQ}{{\mathsf{Q}}}
\nc{\sP}{{\mathsf{P}}}
\nc{\sT}{{\mathsf{T}}}
\nc{\sV}{{\mathsf{V}}}
\nc{\sW}{{\mathsf{W}}}
\nc{\sX}{{\mathsf{X}}}
\nc{\sZ}{{\mathsf{Z}}}

\nc{\sfb}{{\mathsf{b}}}
\nc{\sfc}{{\mathsf{c}}}
\nc{\sd}{{\mathsf{d}}}
\nc{\sg}{{\mathsf{g}}}
\nc{\sk}{{\mathsf{k}}}
\nc{\sfl}{{\mathsf{l}}}
\nc{\sfp}{{\mathsf{p}}}
\nc{\sr}{{\mathsf{r}}}
\nc{\st}{{\mathsf{t}}}
\nc{\sfu}{{\mathsf{u}}}
\nc{\sw}{{\mathsf{w}}}
\nc{\sz}{{\mathsf{z}}}
\nc{\sx}{{\mathsf{x}}}
\nc{\SX}{{\mathsf{X}}}
\nc{\sfv}{{\mathsf{v}}}

\nc{\bLambda}{{\boldsymbol{\Lambda}}}
\nc{\vv}{{\boldsymbol{v}}}
\nc{\Fl}{{{\mathcal F}\ell}}
\nc{\Gr}{{\on{Gr}}}
\nc{\CHH}{{\CH\!\!\CH}}
\nc{\lambdavee}{{\lambda^{\!\scriptscriptstyle\vee}}}
\nc{\alphavee}{\alpha^{\!\scriptscriptstyle\vee}}
\nc{\rhovee}{{\rho^{\!\scriptscriptstyle\vee}}}
\newcommand\iso{\,\vphantom{j^{X^2}}\smash{\overset{\sim}{\vphantom{\rule{0pt}{0.20em}}\smash{\longrightarrow}}}\,}
\nc{\oQM}{\vphantom{j^{X^2}}\smash{\overset{\circ}{\vphantom{\vstretch{0.7}{A}}\smash{\QM}}}}
\nc{\oZ}{{}^\dagger\!\vphantom{j^{X^2}}\smash{\overset{\circ}{\vphantom{\vstretch{0.7}{A}}\smash{Z}}}}
\nc{\odZ}{{}^\dagger\!\vphantom{j^{X^2}}\smash{\overset{\circ}{\vphantom{\vstretch{0.7}{A}}\smash{\mathfrak Z}}}^{c',c}}
\nc{\bdZ}{{}^\dagger\!\vphantom{j^{X^2}}\smash{\overset{\bullet}{\vphantom{\vstretch{0.7}{A}}\smash{\mathfrak Z}}}^{c',c}}
\nc{\oS}{\vphantom{j^{X^2}}\smash{\overset{\circ}{\vphantom{\vstretch{0.7}{A}}\smash{S}}}}
\nc{\buM}{\vphantom{j^{X^2}}\smash{\overset{\bullet}{\vphantom{\vstretch{0.7}{A}}\smash{M}}}}
\nc{\dW}{{}^\dagger\ol\CW{}}
\nc{\hW}{{}^\dagger\hat\CW{}}
\nc{\wW}{{}^\dagger\widetilde\CW{}}
\nc{\dZ}{{}^\dagger\!\fZ^{c',c}}
\nc{\dZc}{{}^\dagger\!\fZ^{c,c}}
\nc{\tZ}{{}^\dagger\!\tilde{Z}{}}
\nc{\hZ}{{}^\dagger\!\hat{Z}{}}

%
%
%
%

\nc{\ssl}{\mathfrak{sl}} \nc{\gl}{\mathfrak{gl}}
\nc{\wt}{\widetilde} \nc{\Sym}{\mathrm{Sym}} \nc{\Res}{\mathrm{Res}}
\nc{\sE}{{\mathsf{E}}} \nc{\bs}{{\mathbf{s}}}
\nc{\trig}{\mathrm{trig}} \nc{\rat}{\mathrm{rat}}
\nc{\sign}{\mathrm{sign}} \nc{\sL}{{\mathsf{L}}}
\nc{\fv}{{\mathfrak{v}}} \nc{\ad}{\mathrm{ad}}
\nc{\spsi}{{\mathsf{\psi}}} \nc{\sh}{{\mathsf{h}}}
\nc{\rtt}{\mathrm{rtt}} \nc{\qdet}{\mathrm{qdet}} \nc{\pt}{{\operatorname{pt}}}
\nc{\M}{\mathrm{M}} \nc{\Ker}{\mathrm{Ker}} \nc{\ssc}{\mathrm{sc}}
\nc{\loc}{\mathrm{loc}} \nc{\fra}{\mathrm{frac}}
\nc{\ddj}{\mathrm{DJ}} \nc{\End}{\mathrm{End}} \nc{\ev}{\mathrm{ev}}
\nc{\GL}{\mathrm{GL}} \nc{\SSym}{\mathrm{SSym}} \nc{\odd}{\mathrm{odd}}
\nc{\gr}{\mathrm{gr}} \nc{\Rees}{\mathrm{Rees}} \nc{\even}{\mathrm{even}}


\begin{document}
\title[Type $A$ shuffle superalgebras]
{Shuffle algebra realizations of type $A$ super Yangians and
 quantum affine superalgebras for all Cartan data}

\author[Alexander Tsymbaliuk]{Alexander Tsymbaliuk}
\address{A.T.:  Yale University, Department of Mathematics, New Haven, CT 06511, USA}
\email{sashikts@gmail.com}

\begin{abstract}
We introduce super Yangians of $\gl(V),\ssl(V)$ (in the new Drinfeld realization)
associated to all Dynkin diagrams. We show that all of them are isomorphic
to the super Yangians introduced by M.~Nazarov in~\cite{na}, by identifying them
with the corresponding RTT super Yangians. However, their ``positive halves'' are not
pairwise isomorphic, and we obtain the shuffle algebra realizations for all of those
in spirit of~\cite{t}. We adapt the latter to the trigonometric setup by obtaining
the shuffle algebra realizations of the ``positive halves'' of type $A$ quantum loop
superalgebras associated to arbitrary Dynkin diagrams.
\end{abstract}
\maketitle


\section{Introduction}


\subsection{Summary}
\

Recall that a novel feature of Lie superalgebras (in contrast to Lie algebras) is that
they admit several non-isomorphic Dynkin diagrams.
The isomorphism of the Lie superalgebras corresponding to different Dynkin diagrams
of finite/affine type has been obtained by V.~Serganova in the Appendix to~\cite{lss}.
Likewise, one may define various quantizations of universal enveloping superalgebras
starting from different Dynkin diagrams, and establishing their isomorphism is a non-trivial
question. In the case of quantum finite/affine superalgebras, this question has been
addressed $20$ years ago by H.~Yamane~\cite{y}. However, an answer to a similar question
for super Yangians seems to be missing in the literature.

In this short note, we study type $A$ super Yangians and their shuffle realizations.
We define those in the Drinfeld realization, generalizing the construction of~\cite{s}
for a distinguished Dynkin diagram. Following~\cite{bk, g2, p2}, we obtain their RTT
realization and thus identify all of them with the super Yangian of M.~Nazarov~\cite{na}.
We also describe their centers, following~\cite{g2}.

However, the ``positive halves'' of these algebras, denoted by $Y^+_\hbar$,
do essentially depend on the choice of Dynkin diagrams. In the second part of
this note, we obtain the shuffle algebra realizations of all such $Y^+_\hbar$
and their Drinfeld-Gavarini dual $\bY^+_\hbar$. We also establish
the trigonometric (aka $q$-deformed) counterparts of these results.

This note is a companion to~\cite{t}
(the shuffle realizations were announced in~\cite[\S8.2]{t}).


\subsection{Outline of the paper}
\

$\bullet$
In Section~\ref{ssec Drinfeld super Yangian gl}, we introduce the Drinfeld super
Yangians $Y(\gl(V))$ associated with arbitrary Dynkin diagrams of $\gl(V)$, where
$V=V_{\bar{0}}\oplus V_{\bar{1}}$ is a finite-dimensional superspace.
For the distinguished Dynkin diagram, this recovers the super Yangians
$Y_{m|n}(1)$ of~\cite{na}, due to~\cite{g2}, while for a general Dynkin diagram,
this recovers the construction of~\cite{p2}. The key result of this section,
Theorem~\ref{Drinfeld super yangians gl are iso}, establishes that $Y(\gl(V))$
is independent (up to isomorphisms) of the choice of Dynkin diagrams. The latter
may be viewed as a rational counterpart of a similar statement for quantum affine
superalgebras, due to~\cite{y}, see Remark~\ref{isomorphism of quantum affine}.

Our proof of Theorem~\ref{Drinfeld super yangians gl are iso} is crucially based
on the identification of $Y(\gl(V))$ with the RTT super Yangians $Y^\rtt(\gl(V))$
introduced in Section~\ref{ssec RTT super Yangian gl}, see
Theorem~\ref{Yangian Gauss decomposition} and Lemma~\ref{isomorphism of RTT yangians}.

In Section~\ref{ssec RTT super Yangian sl}, we introduce the RTT super Yangians
$Y^\rtt(\ssl(V))$ following the classical approach of~\cite{mno}. For
$\dim(V_{\bar{0}})\ne \dim(V_{\bar{1}})$, we obtain a decomposition
$Y^\rtt(\gl(V))\simeq Y^\rtt(\ssl(V))\otimes ZY^\rtt(\gl(V))$,
Theorem~\ref{gl vs sl and center}(a), similar to~\cite{mno}. Here,
$ZY^\rtt(\gl(V))$ denotes the center of $Y^\rtt(\gl(V))$, which is a polynomial
algebra in the coefficients of the quantum Berezinian $b(z)$ defined
in~(\ref{Berezinian}), Theorem~\ref{center of super Yangian} (for the distinguished
Dynkin diagram, $b(z)$ coincides with the quantum Berezinian of~\cite{na}, due
to~\cite[Theorem 1]{g1}). In contrast, $ZY^\rtt(\gl(V))\subset Y^\rtt(\ssl(V))$
if $\dim(V_{\bar{0}})=\dim(V_{\bar{1}})$, Theorem~\ref{gl vs sl and center}(b),
and we introduce the RTT super Yangian $Y^\rtt(A(V))$ as the corresponding
central reduction of $Y^\rtt(\ssl(V))$.

In Section~\ref{ssec super Yangian sl}, we introduce the Drinfeld super Yangians $Y(\ssl(V))$
associated with arbitrary Dynkin diagrams of $\gl(V)$, and construct superalgebra embeddings
$Y(\ssl(V))\hookrightarrow Y(\gl(V))$ and isomorphisms $Y(\ssl(V))\iso Y^\rtt(\ssl(V))$,
Theorem~\ref{sl vs gl Drinfeld}. The latter implies that super Yangians $Y(\ssl(V))$
associated with various Dynkin diagrams are pairwise isomorphic,
Theorem~\ref{isomorphism of Drinfeld yangians sl}.

In Section~\ref{ssec triangular decomposition}, we recall the PBW theorem and
the triangular decomposition for $Y(\ssl(V))$, Theorem~\ref{PBW for Yangian}
and Proposition~\ref{Triangular decomposition}.

In Section~\ref{ssec formal Yangian}, we introduce a $\BC[\hbar]$-version
$Y_\hbar(\ssl(V))$ and its Drinfeld-Gavarini dual subalgebra $\bY_\hbar(\ssl(V))$.
They can be viewed as Rees algebras~(\ref{Rees algebras}) of $Y(\ssl(V))$ with respect
to two standard filtrations on it defined via~(\ref{Gradings}), Remark~\ref{formal vs nonformal}.
The PBW Theorems for $Y_\hbar(\ssl(V))$ and $\bY_\hbar(\ssl(V))$,
Theorems~\ref{PBW for formal Yangian} and~\ref{pbw for entire gavarini yangian},
follow from~\cite[Theorem B.3, Theorem A.7]{ft2}.

$\bullet$
In Section~\ref{ssec super shuffle algebra}, we introduce the rational shuffle
(super)algebra $\bar{W}^V$, Definition~\ref{Definition of Shuffle}, which may be
viewed as a rational super counterpart of the elliptic shuffle algebras of
Feigin-Odesskii, \cite{fo1}--\cite{fo3}. It is related to the ``positive half''
$Y^+_\hbar(\ssl(V))$ of the super Yangian $Y_\hbar(\ssl(V))$ via an explicit homomorphism
$\Psi\colon Y^+_\hbar(\ssl(V))\to \bar{W}^{V}$ of Proposition~\ref{simple shuffle}.
The injectivity of $\Psi$ is established in Corollary~\ref{Injectivity of Psi}.
The key results of Section~\ref{sec shuffle for superYangians} describe the images
of $Y^+_\hbar(\ssl(V))$ and its Drinfeld-Gavarini dual subalgebra $\bY^+_\hbar(\ssl(V))$
under $\Psi$, Theorems~\ref{hard shuffle yangian},~\ref{shuffle integral form yangian}.
The latter is used to obtain a new proof of the PBW property for $\bY^+_\hbar(\ssl(V))$,
Theorem~\ref{pbw for gavarini yangian}.

In Section~\ref{ssec rank 1 case}, we establish the key result
in the simplest case $\dim(V)=2$, Theorem~\ref{rank 1 iso}.

In Section~\ref{ssec usual shuffle algebra}, we introduce our key technical
tool in the study of the shuffle algebras, the \emph{specialization maps}
$\phi_{\unl{d}}$~(\ref{specialization map}). Their two main properties are
established in Lemmas~\ref{lower degrees},~\ref{same degrees}, which immediately
imply the injectivity of $\Psi$, Corollary~\ref{Injectivity of Psi}.

In Section~\ref{ssec image description}, we finally describe the images of
$Y^+_\hbar(\ssl(V))$ and its subalgebra $\bY^+_\hbar(\ssl(V))$,
Theorems~\ref{hard shuffle yangian},~\ref{shuffle integral form yangian}.
The former consists of all \emph{good} shuffle elements,
Definition~\ref{good element yangian}, while the latter consists of all
\emph{integral} shuffle elements, Definition~\ref{integral element yangian}.
We also prove Theorem~\ref{pbw for gavarini yangian}.

$\bullet$
In Section~\ref{sec shuffle for superquantum}, we recall the definition of
$U^>_{\vv}(L\gl(V))$, the ``positive half'' of the quantum loop superalgebra of $\gl(V)$,
and obtain its shuffle algebra realization, Theorem~\ref{quantum shuffle isom}.
This provides the trigonometric counterpart of Theorem~\ref{hard shuffle yangian}
and generalizes~\cite[Theorem 5.17]{t}, where this result was established for
the distinguished Dynkin diagram of $\gl(V)$.


\subsection{Acknowledgments}
\

I am indebted to Boris Feigin, Michael Finkelberg, and Andrei Negu\c{t} for
numerous interesting discussions over the years; to Luan Bezerra and Evgeny Mukhin
for a useful correspondence on quantum affine superalgebras; to Vasily Pestun for
his invitation to IHES; to Yung-Ning Peng for bringing~\cite{p1, p2} (related to our
Sections~\ref{ssec Drinfeld super Yangian gl},~\ref{ssec RTT super Yangian gl})
to my attention after the first version of this note was posted on arXiv;
to anonymous referees for many useful suggestions.

This note was prepared during the author's visit to IHES (Bures-sur-Yvette, France)
in the summer $2019$, sponsored by the European Research Council (ERC) under the European
Union's Horizon $2020$ research and innovation program (QUASIFT grant agreement $677368$).

I gratefully acknowledge the NSF Grant DMS-1821185.


\section{Type $A$ super Yangians}\label{sec super Yangian}


\subsection{Setup and notations}\label{ssec setup}
\

Consider a superspace $V=V_{\bar{0}}\oplus V_{\bar{1}}$ with a $\BC$-basis
$\sfv_1,\ldots,\sfv_n$ such that each $\sfv_i$ is either \emph{even}
($\sfv_i\in V_{\bar{0}}$) or \emph{odd} ($\sfv_i\in V_{\bar{1}}$).
We set $n_{+}:=\dim(V_{\bar{0}}), n_{-}:=\dim(V_{\bar{1}}), n:=n_++n_-=\dim(V)$.
For $1\leq i\leq n$, define $\ol{i}\in \BZ_2$ via
  $\ol{i}=\begin{cases}
     \bar{0}, & \text{if } \sfv_i\in V_{\bar{0}}\\
     \bar{1}, & \text{if } \sfv_i\in V_{\bar{1}}
   \end{cases}$.
Consider a free $\BZ$-module $P:=\oplus_{i=1}^{n} \BZ\epsilon_i$ with the
bilinear form determined by $(\epsilon_i,\epsilon_j)=\delta_{i,j}(-1)^{\ol{i}}$
(we set $(-1)^{\bar{0}}:=1, (-1)^{\bar{1}}:=-1$).
For $1\leq i<n$, let $\alpha_i:=\epsilon_{i}-\epsilon_{i+1}\in P$ be the
simple roots of $\gl(V)$, and
  $\Delta^+:=\{\epsilon_j-\epsilon_i\}_{1\leq j<i\leq n}\subset P$
be the set of positive roots of $\gl(V)$. Let $I=\{1,2,\ldots,n-1\}$
and set $|\alpha_i|:=\ol{i}+\ol{i+1}\in \BZ_2$ for $i\in I$. Finally,
let $(c_{ij})_{i,j\in I}$  be the associated Cartan matrix, that is,
$c_{ij}:=(\alpha_i,\alpha_j)$.

For a superalgebra $A$ and its two homogeneous elements $x$ and $x'$, we define
\begin{equation}\label{super commutator and anticommutator}
  [x,x']:=xx'-(-1)^{|x|\cdot |x'|}x'x\ \ \mathrm{and}\ \
  \{x,x'\}:=xx'+(-1)^{|x|\cdot |x'|}x'x,
\end{equation}
where $|x|$ denotes the $\BZ_2$-grading of $x$ (that is, $x\in A_{|x|}$).

Given two superspaces $A=A_{\bar{0}}\oplus A_{\bar{1}}$ and
$B=B_{\bar{0}}\oplus B_{\bar{1}}$, their tensor product $A\otimes B$
is also a superspace with
  $(A\otimes B)_{\bar{0}}=A_{\bar{0}}\otimes B_{\bar{0}}\oplus A_{\bar{1}}\otimes B_{\bar{1}}$
and
  $(A\otimes B)_{\bar{1}}=A_{\bar{0}}\otimes B_{\bar{1}}\oplus A_{\bar{1}}\otimes B_{\bar{0}}$.
Furthermore, if $A$ and $B$ are superalgebras, then $A\otimes B$ is made
into a superalgebra, the \emph{graded tensor product} of the superalgebras
$A$ and $B$, via the following multiplication:
\begin{equation}\label{graded tensor product}
  (x\otimes y)(x'\otimes y')=(-1)^{|y|\cdot |x'|} (xx')\otimes (yy')
  \ \mathrm{for\ any}\ x\in A_{|x|}, x'\in A_{|x'|}, y\in B_{|y|},y'\in B_{|y'|}.
\end{equation}
We will use only graded tensor products of superalgebras throughout this paper.


\subsection{The Drinfeld super Yangian of $\gl(V)$}\label{ssec Drinfeld super Yangian gl}
\

Following~\cite{d,bk,g2,p2}, define the \emph{Drinfeld super Yangian of $\gl(V)$},
denoted by $Y(\gl(V))$, to be the associative $\BC$-superalgebra generated by
  $\{d_j^{(s)}, \wt{d}_j^{(s)}, e_i^{(r)}, f_i^{(r)}\}_{1\leq i<n, 1\leq j\leq n}^{r\geq 1,s\geq 0}$
with the $\BZ_2$-grading
  $|d_j^{(r)}|=|\wt{d}_j^{(r)}|=\bar{0},\ |e^{(r)}_i|=|f^{(r)}_i|=|\alpha_i|$,
and subject to the following defining relations:
\begin{equation}\label{Dr Yangian 0}
  d^{(0)}_j=1,\ \wt{d}^{(0)}_j=1,\
  \sum_{t=0}^r \wt{d}^{(t)}_i d^{(r-t)}_i=\delta_{r,0},
\end{equation}
\begin{equation}\label{Dr Yangian 1}
  [d^{(r)}_i,d^{(s)}_{j}]=0,
\end{equation}
\begin{equation}\label{Dr Yangian 2}
  [d^{(r)}_i,e^{(s)}_{j}]=
  (-1)^{\ol{i}}(\delta_{i,j}-\delta_{i,j+1}) \sum_{t=0}^{r-1} d^{(t)}_i e^{(r+s-t-1)}_j,
\end{equation}
\begin{equation}\label{Dr Yangian 3}
  [d^{(r)}_i,f^{(s)}_{j}]=
  (-1)^{\ol{i}}(-\delta_{i,j}+\delta_{i,j+1}) \sum_{t=0}^{r-1} f^{(r+s-t-1)}_j d^{(t)}_i,
\end{equation}
\begin{equation}\label{Dr Yangian 4}
  [e_{i}^{(r)},f_{j}^{(s)}]=
  -(-1)^{\ol{i+1}}\delta_{i,j} \sum_{t=0}^{r+s-1} \wt{d}^{(t)}_i d^{(r+s-t-1)}_{i+1},
\end{equation}
\begin{equation}\label{Dr Yangian 5}
\begin{split}
  & [e^{(r)}_i,e^{(s)}_j]=0 \ \mathrm{if}\ c_{ij}=0,\\
  & [e^{(r)}_i, e^{(s+1)}_{i+1}]-[e^{(r+1)}_i,e^{(s)}_{i+1}]=
    -(-1)^{\ol{i+1}} e^{(r)}_i e^{(s)}_{i+1},\\
  & [e^{(r)}_i,e^{(s)}_i]=
    (-1)^{\ol{i}}\sum_{t=1}^{s-1} e^{(t)}_i e^{(r+s-t-1)}_i -
    (-1)^{\ol{i}}\sum_{t=1}^{r-1} e^{(t)}_i e^{(r+s-t-1)}_i
    \ \mathrm{if}\ |\alpha_i|=\bar{0},
\end{split}
\end{equation}
\begin{equation}\label{Dr Yangian 6}
\begin{split}
  & [f^{(r)}_i,f^{(s)}_j]=0 \ \mathrm{if}\ c_{ij}=0,\\
  & [f^{(s+1)}_{i+1},f^{(r)}_i]-[f^{(s)}_{i+1},f^{(r+1)}_i]=
     -(-1)^{\ol{i+1}} f^{(s)}_{i+1} f^{(r)}_i,\\
  & [f^{(r)}_i,f^{(s)}_i]=
    (-1)^{\ol{i}}\sum_{t=1}^{r-1} f^{(r+s-t-1)}_i f^{(t)}_i -
    (-1)^{\ol{i}}\sum_{t=1}^{s-1} f^{(r+s-t-1)}_i f^{(t)}_i
    \ \mathrm{if}\ |\alpha_i|=\bar{0},
\end{split}
\end{equation}
as well as cubic Serre relations
\begin{equation}\label{Dr Yangian 9}
  [e^{(r)}_{i},[e^{(s)}_{i},e^{(t)}_{j}]]+[e^{(s)}_{i},[e^{(r)}_{i},e^{(t)}_{j}]]=0
  \ \mathrm{if}\ j=i\pm 1\ \mathrm{and}\ |\alpha_i|=\bar{0},
\end{equation}
\begin{equation}\label{Dr Yangian 10}
  [f^{(r)}_{i},[f^{(s)}_{i},f^{(t)}_{j}]]+[f^{(s)}_{i},[f^{(r)}_{i},f^{(t)}_{j}]]=0
  \ \mathrm{if}\ j=i\pm 1\ \mathrm{and}\ |\alpha_i|=\bar{0},
\end{equation}
and quartic Serre relations
\begin{equation}\label{Dr Yangian 11}
  [[e^{(r)}_{j-1},e^{(1)}_j],[e^{(1)}_j,e^{(s)}_{j+1}]]=0
  \ \mathrm{if}\ |\alpha_j|=\bar{1}\ \mathrm{and}\ |\alpha_{j-1}|=|\alpha_{j+1}|=\bar{0},
\end{equation}
\begin{equation}\label{Dr Yangian 12}
  [[f^{(r)}_{j-1},f^{(1)}_j],[f^{(1)}_j,f^{(s)}_{j+1}]]=0
  \ \mathrm{if}\ |\alpha_j|=\bar{1}\ \mathrm{and}\ |\alpha_{j-1}|=|\alpha_{j+1}|=\bar{0}.
\end{equation}

\begin{Rem}\label{Serre hold always}
(a) The cubic Serre relations~(\ref{Dr Yangian 9},~\ref{Dr Yangian 10})
are also valid for $|\alpha_i|=\bar{1}$, but in that case, they already
follow from $[e^{(r)}_i, e_i^{(s)}]=0=[f^{(r)}_i, f_i^{(s)}]$, due to
quadratic relations~(\ref{Dr Yangian 5},~\ref{Dr Yangian 6}).

\noindent
(b) The quartic Serre relations~(\ref{Dr Yangian 11},~\ref{Dr Yangian 12})
are also valid for any other parities of $\alpha_{j-1},\alpha_j,\alpha_{j+1}$,
but in those cases, they already follow from the quadratic and cubic
relations~(\ref{Dr Yangian 5}--\ref{Dr Yangian 10}).
%

\noindent
(c) Generalizing the quartic Serre relations~(\ref{Dr Yangian 11},~\ref{Dr Yangian 12}),
the following relations also hold:
\begin{equation}\label{Dr Yangian Serre 4 alternative 1}
  [[e^{(r)}_{j-1},e^{(k)}_j],[e^{(l)}_j,e^{(s)}_{j+1}]]+
  [[e^{(r)}_{j-1},e^{(l)}_j],[e^{(k)}_j,e^{(s)}_{j+1}]]=0,
\end{equation}
\begin{equation}\label{Dr Yangian Serre 4 alternative 2}
  [[f^{(r)}_{j-1},f^{(k)}_j],[f^{(l)}_j,f^{(s)}_{j+1}]]+
  [[f^{(r)}_{j-1},f^{(l)}_j],[f^{(k)}_j,f^{(s)}_{j+1}]]=0,
\end{equation}
cf.~Remark~\ref{Serre hold always 2}(b) and the explanations therein.
We note that these relations~(\ref{Dr Yangian Serre 4 alternative 1},~\ref{Dr Yangian Serre 4 alternative 2})
play a crucial role in the recent paper~\cite{p3}.
\end{Rem}

As pointed out to us by Y.-N.~Peng, the above definition of $Y(\gl(V))$
is actually equivalent to the one from~\cite{p1}.
In the particular case (associated with the distinguished Dynkin diagram)
\begin{equation}\label{distinguished Dynkin}
  \sfv_1,\ldots, \sfv_{n_+}\in V_{\bar{0}}
  \ \mathrm{and}\
  \sfv_{n_+ + 1},\ldots, \sfv_{n}\in V_{\bar{1}}
\end{equation}
(so that $|\alpha_{n_+}|=\bar{1}$ and $|\alpha_{i\ne n_+}|=\bar{0}$),
the defining relations~(\ref{Dr Yangian 0}--\ref{Dr Yangian 12}) first appeared
in~\cite[Theorem 3]{g2}\footnote{We note the following typos in~\cite{g2}:
$j\leq m+1$ should be replaced by $j\geq m+1$ in the third line of~(39),
the sign $(-1)^{\ol{j}}$ should be replaced by $(-1)^{\ol{j+1}}$ in
the right-hand sides of (44, 45).}, where it was shown that the corresponding
super Yangian is isomorphic to the super Yangian $Y^\rtt(\gl_{n_+|n_-})$
first introduced in~\cite{na} (thus generalizing~\cite[Theorem 5.2]{bk}).
The same arguments can be used to establish the following result
(mentioned first in~\cite{p1}):

\begin{Thm}\label{Drinfeld super yangians gl are iso}
The superalgebra $Y(\gl(V))$ depends only on $(n_+,n_-)$, up to an isomorphism.
\end{Thm}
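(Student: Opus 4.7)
The plan is to factor the sought isomorphism through the RTT realization, relying on two forthcoming results: the identification of $Y(\gl(V))$ with the RTT super Yangian $Y^\rtt(\gl(V))$ via the Gauss decomposition of the generating matrix $T(z)$, Theorem~\ref{Yangian Gauss decomposition}, together with the fact that $Y^\rtt(\gl(V))$ depends only on $(n_+,n_-)$ up to isomorphism, Lemma~\ref{isomorphism of RTT yangians}. Granting these, for any two parity orderings with the same $(n_+,n_-)$ one composes Drinfeld $\to$ RTT for the first diagram, the RTT isomorphism, and the inverse Gauss decomposition for the second diagram to obtain the desired isomorphism of Drinfeld super Yangians.

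For the first step, given a parity sequence on the basis $\sfv_1,\ldots,\sfv_n$, I would write $T(z) = F(z)\, D(z)\, E(z)$ with $F(z)$ lower-unitriangular, $D(z)$ diagonal, and $E(z)$ upper-unitriangular matrix-valued formal series, and then identify the entries of $D(z),E(z),F(z)$ with generating series of the Drinfeld generators $d_j^{(s)}, e_i^{(r)}, f_i^{(r)}$. Following the strategy of~\cite{bk, g2, p2}, one verifies that these satisfy the defining relations~(\ref{Dr Yangian 0}--\ref{Dr Yangian 12}); a PBW argument then yields that the resulting surjection from $Y(\gl(V))$ onto $Y^\rtt(\gl(V))$ is an isomorphism.

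For the second step, the RTT super Yangian is defined in terms of the rational R-matrix $R(z)$, which is built from the graded permutation on $V\otimes V$ depending on the parity sequence. Any two parity sequences sharing $(n_+,n_-)$ differ by a permutation $\sigma$ of $\{1,\ldots,n\}$ that preserves parity profiles; such a $\sigma$ intertwines the two R-matrices, up to conjugation by the permutation matrix on $V\otimes V$, and hence the assignment $t_{ij}(z)\mapsto t_{\sigma(i)\sigma(j)}(z)$ (with the appropriate signs) produces the isomorphism of Lemma~\ref{isomorphism of RTT yangians}.

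The main obstacle will lie in the first step, specifically the verification of the cubic and quartic Serre relations~(\ref{Dr Yangian 9}--\ref{Dr Yangian 12}) in the super setting. The quartic Serre relations, which appear at isotropic odd simple roots, are particularly delicate: they do not reduce to elementary consequences of the quadratic RTT data but require careful combinatorial manipulations of matrix entries in the Gauss decomposition, generalizing the arguments of~\cite{g2} for the distinguished diagram and of~\cite{p2} for arbitrary diagrams. Once Theorem~\ref{Yangian Gauss decomposition} and Lemma~\ref{isomorphism of RTT yangians} are established, the present theorem follows immediately by the composition described above.
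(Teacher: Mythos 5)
Your proposal matches the paper's argument exactly: the paper derives Theorem~\ref{Drinfeld super yangians gl are iso} as an immediate consequence of Theorem~\ref{Yangian Gauss decomposition} (Gauss decomposition identifying $Y(\gl(V))\iso Y^\rtt(\gl(V))$) and Lemma~\ref{isomorphism of RTT yangians} (permutation $t^{(r)}_{ij}\mapsto t^{(r)}_{\sigma(i),\sigma(j)}$ for any parity-preserving $\sigma$), precisely as you outline. One small simplification worth noting: in Lemma~\ref{isomorphism of RTT yangians} no sign corrections are needed in the assignment, since the signs in relation~(\ref{RTT termwise}) depend only on the parities $\ol{i},\ol{j},\ol{k}$, which are preserved by $\sigma$ by construction.
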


This is a direct consequence of Theorem~\ref{Yangian Gauss decomposition} and
Lemma~\ref{isomorphism of RTT yangians}, see Remark~\ref{isomorphism of Drinfeld yangians gl}.

\begin{Rem}\label{isomorphism of quantum affine}
The quantum affine superalgebras corresponding to different Dynkin diagrams
of the same affine Lie superalgebra are known to be pairwise isomorphic, due to~\cite{y}.
A similar statement for super Yangians seems to be missing in the literature.
Thus, Theorem~\ref{Drinfeld super yangians gl are iso} and its $\ssl(V)$-counterpart,
Theorem~\ref{isomorphism of Drinfeld yangians sl}, fill this gap at least in type $A$.
\end{Rem}


\subsection{The RTT super Yangian of $\gl(V)$}\label{ssec RTT super Yangian gl}
\

Let $P\colon V\otimes V\to V\otimes V$ be the permutation operator given by
  $P:=\sum_{i,j} (-1)^{\ol{j}}E_{ij}\otimes E_{ji}$,
so that
  $P(\sfv_j\otimes \sfv_i)=(-1)^{\ol{i}\cdot \ol{j}} \sfv_i\otimes \sfv_j$.
Consider the \emph{rational} $R$-matrix
  $R_\rat(z)=1-\frac{1}{z}P\in (\End\ V)^{\otimes 2}$.

Following~\cite{frt,m,na,p1}, define the \emph{RTT super Yangian of $\gl(V)$},
denoted by $Y^\rtt(\gl(V))$, to be the associative $\BC$-superalgebra generated by
  $\{t^{(r)}_{ij}\}_{1\leq i,j\leq n}^{r\geq 1}$
with the $\BZ_2$-grading $|t_{ij}^{(r)}|=\ol{i}+\ol{j}$ and
subject to the following defining relation:
\begin{equation}\label{RTT matrix form}
  R_{\rat}(z-w)T_1(z)T_2(w)=T_2(w)T_1(z)R_\rat(z-w).
\end{equation}
Here, $T(z)$ is the series in $z^{-1}$ with coefficients in the algebra
$Y^\rtt(\gl(V))\otimes \End\ V$, defined~by
\begin{equation}\label{T matrix}
  T(z)=\sum_{i,j} (-1)^{\ol{j}(\ol{i}+1)} t_{ij}(z)\otimes E_{ij}
  \ \mathrm{with}\
  t_{ij}(z):=\delta_{i,j}+\sum_{r\geq 1} t^{(r)}_{ij}z^{-r}.
\end{equation}

\begin{Rem}\label{sign explanation}
Here, we identify the operator
  $\sum_{i,j=1}^n (-1)^{\ol{j}(\ol{i}+1)} t_{ij}(z)\otimes E_{ij}$
with the matrix $(t_{ij}(z))_{i,j=1}^n$. Evoking the
multiplication~(\ref{graded tensor product}) on the graded tensor products,
we see that the extra sign $(-1)^{\ol{j}(\ol{i}+1)}$ ensures that the product
of two matrices is calculated in the usual way.
\end{Rem}

Multiplying both sides of~(\ref{RTT matrix form}) by $z-w$,
we obtain an equality of series in $z,w$ with coefficients in
  $Y^\rtt(\gl(V))\otimes (\End\ V)^{\otimes 2}$.
Thus, relation~(\ref{RTT matrix form}) is equivalent to the following relations:
\begin{equation}\label{RTT termwise}
  (z-w)[t_{ij}(z),t_{kl}(w)]=
  (-1)^{\ol{i}\cdot \ol{j}+\ol{i}\cdot \ol{k}+\ol{j}\cdot \ol{k}}
  \left(t_{kj}(z)t_{il}(w)-t_{kj}(w)t_{il}(z)\right)
\end{equation}
for all $1\leq i,j,k,l\leq n$.

In the particular case~(\ref{distinguished Dynkin}), we recover
the super Yangian $Y^\rtt(\gl_{n_+|n_-})$ of~\cite{na} (denoted by
$Y_{n_+|n_-}(1)$ in~\cite{na}), while for a general case, we actually
get isomorphic algebras, due to the following simple result:

\begin{Lem}\label{isomorphism of RTT yangians}
The superalgebra $Y^\rtt(\gl(V))$ depends only on $(n_+,n_-)$, up to an isomorphism.
In particular, $Y^\rtt(\gl(V))$ is isomorphic to the super Yangian
$Y^\rtt(\gl_{n_+|n_-})$ of~\cite{na}.
\end{Lem}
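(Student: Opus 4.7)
The plan is to exhibit $Y^\rtt(\gl(V))$ as an invariant of the superspace $V$ itself, depending only on its $\BZ_2$-graded structure and not on the ordering of the chosen homogeneous basis $\sfv_1,\ldots,\sfv_n$. Once this is done, the isomorphism with Nazarov's $Y^\rtt(\gl_{n_+|n_-})$ of~\cite{na} is obtained by specializing to the distinguished basis~(\ref{distinguished Dynkin}).

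First, I would reformulate the defining data of Section~\ref{ssec RTT super Yangian gl} in a basis-free manner. The super-permutation $P\in\End(V\otimes V)$ is the intrinsic operator $\sfv\otimes\sfw\mapsto (-1)^{|\sfv|\cdot|\sfw|}\sfw\otimes\sfv$, so the rational R-matrix $R_\rat(z)=1-z^{-1}P$ is intrinsic. One may also regard $T(z)$ as an element of $Y^\rtt(\gl(V))\otimes\End(V)[[z^{-1}]]$, with formula~(\ref{T matrix}) being its expansion in the basis $\{E_{ij}\}$ of $\End(V)$; the sign $(-1)^{\ol{j}(\ol{i}+1)}$ in~(\ref{T matrix}) is exactly what is needed, given the graded tensor product convention~(\ref{graded tensor product}), so that the ordinary product of matrices $(t_{ij}(z))$ agrees with composition in $\End(V)$ (cf.\ Remark~\ref{sign explanation}). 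In this formalism the defining relation~(\ref{RTT matrix form}) makes sense using only the $\BZ_2$-graded structure of $V$.

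Second, I would invoke the obvious fact that any two finite-dimensional superspaces with the same $(n_+,n_-)$ are isomorphic as superspaces. Given an isomorphism $\phi\colon V\iso V'$ of superspaces taking the homogeneous basis $\{\sfv_i\}$ of $V$ to a reordering $\{\sfv'_{\sigma(i)}\}$ of a homogeneous basis of $V'$ (so that $\ol{\sigma(i)}{}'=\ol{i}$ for some permutation $\sigma\in S_n$), the induced isomorphism $\End(V)\iso\End(V')$ intertwines the super-permutations and hence the $R$-matrices. Transporting the generating series $T(z)$ accordingly yields a superalgebra isomorphism $Y^\rtt(\gl(V))\iso Y^\rtt(\gl(V'))$. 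Concretely, on generators it takes the form
\[
  t_{ij}^{(r)}\ \longmapsto\ (-1)^{\varepsilon(i,j,\sigma)}\, t'^{(r)}_{\sigma(i),\sigma(j)},
\]
with an appropriate sign $\varepsilon(i,j,\sigma)\in\BZ_2$ dictated by~(\ref{T matrix}) and~(\ref{graded tensor product}). Taking $V'$ with the distinguished parity sequence~(\ref{distinguished Dynkin}) then identifies $Y^\rtt(\gl(V))$ with $Y^\rtt(\gl_{n_+|n_-})$.

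The main obstacle is the bookkeeping of signs: one must verify that, after the twist by $(-1)^{\varepsilon(i,j,\sigma)}$, the images satisfy the relations~(\ref{RTT termwise}) written with the parity sequence $(\ol{i}{}')$. If preferred, this step can be reduced to the case of an adjacent transposition $\sigma=(k,k+1)$ with $\ol{k}\ne\ol{k+1}$ (the other case being trivial), where the sign analysis is short; iteration then handles arbitrary $\sigma$. Beyond this sign check, no further content is required, which matches the paper's characterization of the statement as a ``simple result''.
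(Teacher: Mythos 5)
Your approach is essentially the paper's: the paper simply picks $\sigma\in\Sigma_n$ with $\ol{\sigma(i)}{}'=\ol{i}$ and observes that $t^{(r)}_{ij}\mapsto t'^{(r)}_{\sigma(i),\sigma(j)}$ (with no sign at all) preserves~(\ref{RTT termwise}), since that relation depends on $i,j,k,l$ only through the parities $\ol{i},\ol{j},\ol{k},\ol{l}$, which are unchanged by $\sigma$. Your basis-free packaging via $\phi\colon V\iso V'$ and $\End(V)\iso\End(V')$ is a cleaner conceptual wrapper but gives the same map on generators; the ``appropriate sign'' $\varepsilon(i,j,\sigma)$ you leave undetermined is in fact identically zero, precisely because $(-1)^{\ol{j}(\ol{i}+1)}=(-1)^{\ol{\sigma(j)}{}'(\ol{\sigma(i)}{}'+1)}$, so the hedging about sign bookkeeping is unnecessary. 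So the proposal is correct and amounts to the same one-line argument, slightly overdressed.
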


\begin{proof}
Let $V'$ be another superspace with a $\BC$-basis $\sfv'_1,\ldots,\sfv'_n$
such that each $\sfv'_i$ is either \emph{even} or \emph{odd} and
$n'_+=n_+, n'_-=n_-$. Pick a permutation $\sigma\in \Sigma_n$, such that
$\sfv_i\in V$ and $\sfv'_{\sigma(i)}\in V'$ have the same parity for all $i$.
Then, the assignment $t^{(r)}_{ij}\mapsto t^{(r)}_{\sigma(i),\sigma(j)}$
is compatible with the defining relations~(\ref{RTT termwise}), thus
giving rise to an isomorphism $Y^\rtt(\gl(V))\iso Y^\rtt(\gl(V'))$.
\end{proof}

We also have two standard relations between $Y^\rtt(\gl(V))$ and $U(\gl(V))$
(cf.~\cite{g2,p2}):

\begin{Lem}\label{evaluation homom}
(a) The assignment $E_{ij}\mapsto (-1)^{\ol{i}}t^{(1)}_{ij}$ gives rise
to a superalgebra embedding
\begin{equation*}
  \iota\colon U(\gl(V))\hookrightarrow Y^\rtt(\gl(V)).
\end{equation*}

\noindent
(b) The assignment $t^{(r)}_{ij}\mapsto (-1)^{\ol{i}}\delta_{r,1}E_{ij}$ gives rise
to a superalgebra epimorphism
\begin{equation*}
  \ev\colon Y^\rtt(\gl(V))\twoheadrightarrow U(\gl(V)).
\end{equation*}
\end{Lem}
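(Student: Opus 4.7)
The plan is to verify both parts by extracting coefficients from the matrix RTT relation~\eqref{RTT termwise} and then leveraging the composition $\ev\circ\iota=\on{id}_{U(\gl(V))}$ to deduce injectivity in~(a). I would treat~(b) first, since it supplies the right inverse needed for~(a).

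First, by expanding $t_{ij}(z)=\delta_{ij}+\sum_{r\geq 1}t^{(r)}_{ij}z^{-r}$ and extracting the coefficient of $z^{-r}w^{-s}$ (for $r,s\geq 0$, with the convention $t^{(0)}_{ij}=\delta_{ij}$), relation~\eqref{RTT termwise} is equivalent to the family
\begin{equation*}
  [t^{(r+1)}_{ij},t^{(s)}_{kl}]-[t^{(r)}_{ij},t^{(s+1)}_{kl}] =
  (-1)^{\ol{i}\ol{j}+\ol{i}\ol{k}+\ol{j}\ol{k}}
  \bigl(t^{(r)}_{kj}t^{(s)}_{il}-t^{(s)}_{kj}t^{(r)}_{il}\bigr).
\end{equation*}
Under the proposed $\ev$, every $t^{(r)}_{ij}$ with $r\geq 2$ is sent to $0$, so each of these relations is tautologically preserved except possibly in the single case $r=0,s=1$ (the case $r=s=0$ is trivial, and $r=1,s=0$ is symmetric). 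That case reduces to checking
\begin{equation*}
  (-1)^{\ol{i}+\ol{k}}[E_{ij},E_{kl}] \;=\;
  (-1)^{\ol{i}\ol{j}+\ol{i}\ol{k}+\ol{j}\ol{k}}
  \bigl(\delta_{kj}(-1)^{\ol{i}}E_{il}-\delta_{il}(-1)^{\ol{k}}E_{kj}\bigr)
\end{equation*}
in $U(\gl(V))$, which matches the defining super-commutator $[E_{ij},E_{kl}]=\delta_{jk}E_{il}-(-1)^{(\ol{i}+\ol{j})(\ol{k}+\ol{l})}\delta_{il}E_{kj}$ once the Kronecker deltas are used to identify parities. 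Thus $\ev$ is a well-defined superalgebra map, and it is surjective since $E_{ij}=(-1)^{\ol{i}}\ev(t^{(1)}_{ij})$.

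For part~(a), I would substitute $E_{ij}\mapsto(-1)^{\ol{i}}t^{(1)}_{ij}$ into the super-commutator relations of $U(\gl(V))$; by the very computation above (now read in reverse), the specialization of the RTT relation at $r=0,s=1$ inside $Y^\rtt(\gl(V))$ is exactly what is needed, so $\iota$ is a well-defined superalgebra homomorphism. Finally, the composite $\ev\circ\iota$ sends $E_{ij}\mapsto(-1)^{\ol{i}}\ev(t^{(1)}_{ij})=(-1)^{2\ol{i}}E_{ij}=E_{ij}$, so it is the identity of $U(\gl(V))$; this forces $\iota$ to be injective.

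The only subtle point is the parity bookkeeping: one must confirm that the sign $(-1)^{\ol{i}\ol{j}+\ol{i}\ol{k}+\ol{j}\ol{k}}$ collapses to $1$ when $\ol{j}=\ol{k}$ and to $(-1)^{(\ol{i}+\ol{j})(\ol{k}+\ol{l})-\ol{i}-\ol{k}}$ when $\ol{i}=\ol{l}$, which is routine modulo~$2$ arithmetic. Everything else is formal manipulation of generating series, and no further structural input about $Y^\rtt(\gl(V))$ is required beyond the RTT relation itself.
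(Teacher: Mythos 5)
Your proof is correct and fills in the details that the paper dismisses as ``Straightforward'': extracting coefficients of $z^{-r}w^{-s}$ from~\eqref{RTT termwise}, observing that under $\ev$ the only nontrivial case is $(r,s)=(0,1)$ (with $(1,0)$ symmetric), reducing to the $\gl(V)$ super-commutator, and deducing injectivity of $\iota$ from $\ev\circ\iota=\mathrm{id}$. The only blemish is the wording of the closing parenthetical about the sign ``collapsing to~$1$ when $\ol{j}=\ol{k}$''; as written that is not literally what happens (the combined sign $(-1)^{\ol{i}\ol{j}+\ol{i}\ol{k}+\ol{j}\ol{k}}$ reduces to $(-1)^{\ol{j}}$, which is then cancelled by the remaining $(-1)^{\ol{k}}$), but the displayed computation that matters is carried out correctly.
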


\begin{proof}
Straightforward.
\end{proof}

The superalgebra $Y^\rtt(\gl(V))$ is also endowed with two different filtrations,
defined via
\begin{equation}\label{gradings}
  \deg_1(t^{(r)}_{ij})=r\ \ \mathrm{and}\ \ \deg_2(t^{(r)}_{ij})=r-1.
\end{equation}
Let $\gr_1 Y^\rtt(\gl(V)), \gr_2 Y^\rtt(\gl(V))$ denote the corresponding
associated graded superalgebras.

\begin{Lem}\label{associated graded}
(a) The assignment $t^{(r)}_{ij}\mapsto \mathsf{t}^{(r)}_{ij}$ gives rise to
a superalgebra isomorphism
\begin{equation}\label{quasiclassical limit 2}
  \gr_1 Y^\rtt(\gl(V))\iso \BC[\{\mathsf{t}^{(r)}_{ij}\}_{1\leq i,j\leq n}^{r\geq 1}]
\end{equation}
with the polynomial superalgebra in the variables $\mathsf{t}^{(r)}_{ij}$ with
the $\BZ_2$-grading $|\mathsf{t}^{(r)}_{ij}|=\ol{i}+\ol{j}$.

\noindent
(b) The assignment $t^{(r)}_{ij}\mapsto (-1)^{\ol{i}}E_{ij}\cdot t^{r-1}$
gives rise to a superalgebra isomorphism
\begin{equation}\label{quasiclassical limit 1}
  \gr_2 Y^\rtt(\gl(V))\iso U(\gl(V)[t])
\end{equation}
with the universal enveloping of the loop superalgebra $\gl(V)[t]=\gl(V)\otimes \BC[t]$.
\end{Lem}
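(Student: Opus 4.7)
The plan is to follow the classical template of \cite{mno}, adapted to the super setting. The basic ingredient is the closed-form identity
\begin{equation*}
[t^{(r)}_{ij}, t^{(s)}_{kl}] \,=\, (-1)^{\bar i\, \bar j + \bar i\, \bar k + \bar j\, \bar k} \sum_{a=1}^{\min(r,s)} \Big( t^{(a-1)}_{kj}\, t^{(r+s-a)}_{il} \,-\, t^{(r+s-a)}_{kj}\, t^{(a-1)}_{il} \Big),
\end{equation*}
with the convention $t^{(0)}_{ij}=\delta_{i,j}$, which I would derive from~(\ref{RTT termwise}) by induction on $\min(r,s)$. Each summand on the right has total $\deg_1$-degree $(a-1)+(r+s-a)=r+s-1$, strictly less than $r+s$, and total $\deg_2$-degree at most $(r-1)+(s-1)$, with the top $\deg_2$-part contributed only by the $a=1$ summand.

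For part (a), the first bound forces all supercommutators to vanish in $\gr_1 Y^\rtt(\gl(V))$, so the assignment $\mathsf{t}^{(r)}_{ij}\mapsto \gr_1 t^{(r)}_{ij}$ lifts to a surjective superalgebra homomorphism from the polynomial superalgebra onto $\gr_1 Y^\rtt(\gl(V))$. Injectivity is equivalent to the PBW property of $Y^\rtt(\gl(V))$, which is classical in the distinguished case~\refe{distinguished Dynkin} by \cite{na, g2} and extends to arbitrary $V$ via \refl{isomorphism of RTT yangians}.

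For part (b), isolating the top $\deg_2$-component in the above identity yields
\begin{equation*}
[t^{(r)}_{ij}, t^{(s)}_{kl}] \,\equiv\, (-1)^{\bar i\, \bar j + \bar i\, \bar k + \bar j\, \bar k} \Big( \delta_{k,j}\, t^{(r+s-1)}_{il} \,-\, \delta_{i,l}\, t^{(r+s-1)}_{kj} \Big) \pmod{\deg_2 \leq r+s-3}.
\end{equation*}
I would compare this with the Lie superbracket $[E_{ij}\otimes t^{r-1}, E_{kl}\otimes t^{s-1}]$ in $\gl(V)[t]$ and verify that, after the twist $t^{(r)}_{ij}\mapsto (-1)^{\bar i} E_{ij}\otimes t^{r-1}$, the two sets of signs match. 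This produces a surjective superalgebra morphism $U(\gl(V)[t]) \twoheadrightarrow \gr_2 Y^\rtt(\gl(V))$; injectivity again follows by matching ordered monomial PBW bases on both sides.

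The main obstacle I anticipate is the sign bookkeeping in part (b): one has to check, case by case on the parities of $i,j,k,l$ and on whether $\delta_{k,j}$ or $\delta_{i,l}$ is non-zero, that the Koszul-type factor $(-1)^{\bar i\, \bar j + \bar i\, \bar k + \bar j\, \bar k}$ inherited from~(\ref{RTT termwise}) reconciles, after multiplication by $(-1)^{\bar i+\bar k}$, with the super-sign $(-1)^{(\bar i+\bar j)(\bar k+\bar l)}$ intrinsic to the bracket on $\gl(V)[t]$. This reduces to a routine but delicate $\BZ_2$-computation, and is the step that genuinely uses the twist by $(-1)^{\bar i}$ in the stated map.
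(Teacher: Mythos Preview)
Your proposal is correct and follows essentially the same route as the references \cite{g2,p2} that the paper invokes in lieu of a proof: derive the explicit termwise commutator from~\eqref{RTT termwise}, read off the $\deg_1$- and $\deg_2$-leading terms, and reduce injectivity to the PBW theorem for $Y^\rtt(\gl(V))$ (transported from the distinguished case via Lemma~\ref{isomorphism of RTT yangians}). Your anticipated sign check in part~(b) indeed goes through, so there is no gap.
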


\begin{proof}
Analogous to~\cite[Theorem 1, Corollary 1]{g2},
cf.~\cite[Proposition 2.2, Corollary 2.3]{p2}.
\end{proof}

Let us now relate $Y^\rtt(\gl(V))$ to $Y(\gl(V))$.
Consider the Gauss decomposition of $T(z)$:
\begin{equation*}
  T(z)=F(z)\cdot D(z)\cdot E(z).
\end{equation*}
Here,
  $F(z),D(z),E(z)\in \left(Y^\rtt(\gl(V))\otimes \End\ V\right)[[z^{-1}]]$
are of the form
\begin{equation*}
\begin{split}
  & F(z)=\sum_{i} E_{ii}+\sum_{j<i} (-1)^{\ol{j}(\ol{i}+1)}F_{ij}(z)\otimes E_{ij},\
    D(z)=\sum_{i} D_i(z)\otimes E_{ii},\\
  & E(z)=\sum_{i} E_{ii}+\sum_{j<i} (-1)^{\ol{i}(\ol{j}+1)}E_{ji}(z)\otimes E_{ji},
\end{split}
\end{equation*}
cf.~Remark~\ref{sign explanation}. Define the elements
  $\{D^{(s)}_k,\wt{D}^{(s)}_k,E^{(r)}_{ji},F^{(r)}_{ij}\}_{1\leq j<i\leq n, 1\leq k\leq n}^{r\geq 1, s\geq 0}$
of $Y^\rtt(\gl(V))$ via
\begin{equation*}
  E_{ji}(z)=\sum_{r\geq 1} E^{(r)}_{ji} z^{-r},\
  F_{ij}(z)=\sum_{r\geq 1} F^{(r)}_{ij} z^{-r},\
  D_{k}(z)=\sum_{s\geq 0} D^{(s)}_k z^{-s},\
  D_{k}(z)^{-1}=\sum_{s\geq 0} \wt{D}^{(s)}_k z^{-s}.
\end{equation*}
For $1\leq i<n$ and $r\geq 1$, set
$E^{(r)}_i:=E^{(r)}_{i,i+1}$ and $F^{(r)}_i:=F^{(r)}_{i+1,i}$.
Due to~\cite[Lemma 3.3]{p2} (generalizing~\cite[(5.5)]{bk} in the classical
setup as well as~\cite[(10)]{g2} for the distinguished Dynkin diagram), we have:

\begin{Lem}\label{higher root modes}
For any $1\leq j < i-1 < n$, the following equalities hold in $Y^\rtt(\gl(V))$:
\begin{equation*}
  E^{(r)}_{ji}=(-1)^{\ol{i-1}}[E^{(r)}_{j,i-1},E^{(1)}_{i-1}],\
  F^{(r)}_{ij}=(-1)^{\ol{i-1}}[F^{(1)}_{i-1},F^{(r)}_{i-1,j}].
\end{equation*}
\end{Lem}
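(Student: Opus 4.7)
The plan is to derive both identities from the RTT relation~\refe{RTT termwise} via the Gauss decomposition $T(z) = F(z)D(z)E(z)$, following the strategy of~\cite[Proposition 5.1]{bk} in the non-super case and of~\cite[Lemma 10]{g2} for the distinguished Dynkin diagram, suitably adapted here to arbitrary parity sequences.

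First, I would express each Gauss coefficient $E_{ji}(z)$ (for $j<i$) explicitly in terms of the matrix entries $t_{kl}(z)$ and the inverses $D_k(z)^{-1}$, either via the standard (super) quasideterminant formulas or by directly solving the linear system obtained from $T(z) = F(z)D(z)E(z)$. This yields a presentation of the shape $E_{ji}(z) = D_j(z)^{-1} t_{ji}(z) + (\text{terms involving strictly smaller column or row indices})$, with an analogous expression for $F_{ij}(z)$. The $F$-side of the lemma is obtained from the $E$-side by applying the same argument to the inverted Gauss decomposition $T(z)^{-1} = E(z)^{-1}D(z)^{-1}F(z)^{-1}$, which swaps the roles of $E$ and $F$ up to sign.

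Second, I would apply~\refe{RTT termwise} to the quadruples of indices drawn from $\{j, i-1, i\}$, and then, after multiplying through by $(z-w)^{-1}$, isolate the coefficient of $z^{-r}w^{-1}$. Substituting the Gauss expressions from the previous step, this produces an identity of the schematic form
\[
  [E^{(r)}_{j,i-1},\, E^{(1)}_{i-1}] \,=\, (-1)^{\ol{i-1}} E^{(r)}_{ji} \,+\, \Delta^{(r)}_{ji},
\]
where $\Delta^{(r)}_{ji}$ is a sum of products of lower-rank Gauss entries. The core computation is the vanishing of $\Delta^{(r)}_{ji}$: the ``correction'' contributions coming from the off-diagonal parts of $F(z)$ and $E(z)$ cancel against those produced by the $D_{i-1}(z)$ factor when one normal-orders the product $F(z)D(z)E(z)$. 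The $F$-case is handled in parallel.

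The main obstacle will be the accurate tracking of signs. The RTT relation~\refe{RTT termwise} itself contributes a factor $(-1)^{\ol{i}\,\ol{j}+\ol{i}\,\ol{k}+\ol{j}\,\ol{k}}$; the Gauss coefficients are packaged into $T(z)$ with additional sign factors $(-1)^{\ol{j}(\ol{i}+1)}$ (see Remark~\ref{sign explanation}); and each super commutator, per~\refe{super commutator and anticommutator}, carries its own parity-dependent sign. Consolidating all of these into the single prefactor $(-1)^{\ol{i-1}}$ on the right-hand side is delicate, especially at the ``isotropic'' positions where $|\alpha_{i-1}|=\bar{1}$ and the nominal commutator becomes an anticommutator; this is precisely the refinement that~\cite[Lemma 3.3]{p2} provides over the distinguished-diagram treatment of~\cite{g2}, and it is the step at which I would expect the computation to take its genuine effort.
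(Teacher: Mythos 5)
The paper offers no proof here: the lemma is quoted verbatim from~\cite[Lemma 3.3]{p2}, with~\cite[(5.5)]{bk} and~\cite[(10)]{g2} cited as the earlier special cases. So there is no internal argument to compare against, only the strategy of those references — and that is what you are reconstructing. Your outline is broadly faithful to that strategy, with two caveats worth flagging.

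First, on the $F$-side: you propose deducing it from the $E$-side via the inverted Gauss decomposition $T(z)^{-1}=E(z)^{-1}D(z)^{-1}F(z)^{-1}$. This does not actually swap the roles of $E$ and $F$ in the required sense: the displayed factorization of $T(z)^{-1}$ is in UDL order, not LDU, so the Gauss coefficients of $T(z)^{-1}$ in the sense relevant to the lemma are not (up to sign) simply $E\leftrightarrow F$. The standard device in~\cite{bk,g2,p2} is instead the transposition anti-automorphism $\tau\colon t_{ij}(u)\mapsto(\pm)t_{ji}(u)$ of $Y^\rtt(\gl(V))$, which sends $E_{ji}(u)$ to $F_{ij}(u)$, preserves the $D_k(u)$, and reverses products; applying $\tau$ to the $E$-identity produces exactly the stated $F$-identity, including the reversed bracket order $[F^{(1)}_{i-1},F^{(r)}_{i-1,j}]$. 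Your $T^{-1}$ route would need extra work to land on the same decomposition, so you should replace it by $\tau$.

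Second, on the $E$-side mechanics: the arguments in~\cite{bk,g2,p2} do not proceed by directly plugging quasideterminant expansions into~\refe{RTT termwise} and extracting the coefficient of $z^{-r}w^{-1}$. They first use the shift homomorphisms $\psi_m$ (embedding the Yangian attached to the lower-right corner submatrix) to reduce to the case of the first superrow, where the quasideterminant formulas for $E_{1,i}(z)$ simplify; only then is a small RTT computation needed, and the ``correction term'' $\Delta^{(r)}_{ji}$ never appears because the shift map has already normalized it away. Your head-on version, in which $\Delta^{(r)}_{ji}$ must be shown to vanish by normal-ordering $F(z)D(z)E(z)$, is in principle feasible but is not the route the cited references take, and it would require a substantial and error-prone sign computation that you correctly identify as the crux; I would not regard the sketch as a proof until that cancellation is exhibited. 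If you instead follow the shift-map reduction, the sign bookkeeping collapses to the single factor $(-1)^{\ol{i-1}}$ coming from the parity of the pivot row $i-1$, which is where~\cite[Lemma 3.3]{p2} improves on the distinguished-diagram statement of~\cite{g2}.
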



\begin{Cor}\label{surjectivity of upsilon}
$Y^\rtt(\gl(V))$ is generated by
  $\{D_j^{(s)}, \wt{D}_j^{(s)}, E_i^{(r)}, F_i^{(r)}\}_{1\leq i<n, 1\leq j\leq n}^{r\geq 1,s\geq 0}$.
\end{Cor}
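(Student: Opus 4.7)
The plan is to exploit the Gauss decomposition $T(z)=F(z)D(z)E(z)$ together with Lemma~\ref{higher root modes} to show that every generator $t^{(r)}_{ij}$ lies in the subalgebra generated by $\{D_j^{(s)}, \wt{D}_j^{(s)}, E_i^{(r)}, F_i^{(r)}\}_{1\leq i<n,\, 1\leq j\leq n}^{r\geq 1,\, s\geq 0}$. Since by construction $Y^\rtt(\gl(V))$ is generated by all coefficients $t^{(r)}_{ij}$, this will finish the proof.

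First, I would expand the matrix equality $T(z)=F(z)D(z)E(z)$ to express each $t_{ij}(z)$ as an explicit (signed) polynomial in the entries $F_{ab}(z)$ (for $a>b$), $D_k(z)$, and $E_{cd}(z)$ (for $c<d$), using the triangular shape of $F$, $D$, $E$. Taking coefficients of $z^{-r}$ shows that each $t^{(r)}_{ij}$ is a (noncommutative) polynomial in the coefficients $F^{(p)}_{ab}$, $D^{(s)}_k$, and $E^{(q)}_{cd}$. Hence it suffices to show that every $E^{(r)}_{ji}$ (with $j<i$) and every $F^{(r)}_{ij}$ (with $j<i$) lies in the subalgebra generated by the claimed set.

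Next, I would proceed by induction on the ``gap'' $i-j\geq 1$. The base case $i-j=1$ is immediate since $E^{(r)}_i=E^{(r)}_{i,i+1}$ and $F^{(r)}_i=F^{(r)}_{i+1,i}$ are among the listed generators. For the inductive step $i-j\geq 2$, apply Lemma~\ref{higher root modes}:
\begin{equation*}
  E^{(r)}_{ji}=(-1)^{\ol{i-1}}[E^{(r)}_{j,i-1},E^{(1)}_{i-1}],\qquad
  F^{(r)}_{ij}=(-1)^{\ol{i-1}}[F^{(1)}_{i-1},F^{(r)}_{i-1,j}].
\end{equation*}
Since the gap for $E^{(r)}_{j,i-1}$ and $F^{(r)}_{i-1,j}$ is $i-1-j<i-j$, the inductive hypothesis places them in the desired subalgebra, while $E^{(1)}_{i-1}$ and $F^{(1)}_{i-1}$ are simple-root generators. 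Their (super)commutators therefore remain in that subalgebra, completing the induction.

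No serious obstacle is anticipated; the only thing to be careful about is the bookkeeping of the signs in the Gauss decomposition (cf.\ Remark~\ref{sign explanation}), which affects the precise coefficients but not the conclusion that $t^{(r)}_{ij}$ is a polynomial in $F^{(p)}_{ab}, D^{(s)}_k, E^{(q)}_{cd}$, and the correct reading of Lemma~\ref{higher root modes} in the super setting. Once these two points are in place, the corollary follows.
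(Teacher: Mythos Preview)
Your proposal is correct and follows exactly the intended argument: the paper states this as an immediate corollary of Lemma~\ref{higher root modes} (together with the Gauss decomposition $T(z)=F(z)D(z)E(z)$), and your induction on the gap $i-j$ spells out precisely why that lemma suffices.
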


Similar to~\cite{df,bk,g2,p2}, we have the following result:

\begin{Thm}\label{Yangian Gauss decomposition}
There is a unique superalgebra isomorphism
\begin{equation}\label{upsilon isom}
  \Upsilon\colon Y(\gl(V))\iso Y^\rtt(\gl(V))
\end{equation}
defined by
  $e^{(r)}_i\mapsto E^{(r)}_i, f^{(r)}_i\mapsto F^{(r)}_i,
   d^{(s)}_j\mapsto D^{(s)}_j, \wt{d}^{(s)}_j\mapsto \wt{D}^{(s)}_j$.
\end{Thm}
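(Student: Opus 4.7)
My plan is to follow the classical Brundan--Kleshchev strategy, generalized to the super setting and arbitrary Dynkin diagrams in the spirit of~\cite{bk, g2, p2}. The proof splits into three steps: well-definedness of $\Upsilon$ as a homomorphism, surjectivity, and injectivity.

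\smallskip

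\noindent \textbf{Step 1 (Well-definedness).} The task is to verify that the Gauss decomposition elements $E_i^{(r)}, F_i^{(r)}, D_j^{(s)}, \wt{D}_j^{(s)}$ in $Y^\rtt(\gl(V))$ satisfy all Drinfeld relations~\refe{Dr Yangian 0}--\refe{Dr Yangian 12}. The relations in~\refe{Dr Yangian 0} for the $D_j, \wt{D}_j$ are automatic from their definition via $D_k(z)^{-1}=\sum_s \wt{D}_k^{(s)}z^{-s}$. For the quadratic relations~\refe{Dr Yangian 1}--\refe{Dr Yangian 6}, the strategy is to extract them from the coefficient-wise form~\refe{RTT termwise} of the RTT relation, applied to indices that correspond (via the Gauss decomposition formulas of~\cite[\S3]{p2}) to the simple root elements. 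Signs are controlled by the conventions of Remark~\ref{sign explanation} and the parities $|t_{ij}^{(r)}|=\ol{i}+\ol{j}$. The cubic Serre relations~\refe{Dr Yangian 9}--\refe{Dr Yangian 10} are obtained by applying \refl{higher root modes} and re-expanding brackets, while using~\refr{Serre hold always}(a) to dispense with the case $|\alpha_i|=\bar{1}$ (which already follows from quadratic relations).

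\smallskip

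\noindent \textbf{Step 2 (Quartic Serre).} The main obstacle is verifying the quartic Serre relations~\refe{Dr Yangian 11}--\refe{Dr Yangian 12}, which are specific to odd simple roots flanked by even ones and have no analogue in the non-super Yangian. I would first reduce to $n=3$ by observing that the relevant generators lie in an explicit rank-$3$ RTT subsuperalgebra of $Y^\rtt(\gl(V))$ corresponding to indices $\{j-1,j,j+1,j+2\}$. Then, using \refl{higher root modes} to express $E^{(r)}_{j-1,j+2}$ as an iterated commutator, and using~\refe{RTT termwise} with various index choices, the quartic identity becomes a direct (though tedious) computation on the resulting coefficient relations; cf.\ the analogous verification in~\cite{p2} and~\cite[Thm.~3]{g2} for the distinguished diagram.

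\smallskip

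\noindent \textbf{Step 3 (Surjectivity and injectivity).} Surjectivity of $\Upsilon$ is immediate from \refc{surjectivity of upsilon}. For injectivity, I would use a filtration argument matching the $\deg_2$ filtration of~\refe{gradings}. Equip $Y(\gl(V))$ with the filtration $\deg(X^{(r)})=r-1$ for $X\in\{d_j,\wt{d}_j,e_i,f_i\}$; by construction $\Upsilon$ respects both filtrations. By \refl{associated graded}(b), $\gr_2 Y^\rtt(\gl(V))\simeq U(\gl(V)[t])$, and under this isomorphism the symbols of $E^{(r)}_i, F^{(r)}_i, D^{(s)}_j$ correspond to the standard loop generators of $U(\gl(V)[t])$. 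Hence the induced map $\gr \Upsilon$ is a surjection from $\gr Y(\gl(V))$ onto $U(\gl(V)[t])$. The defining relations of $Y(\gl(V))$ degenerate in the associated graded to (at most) the loop algebra relations, giving a surjection in the other direction, so $\gr\Upsilon$ is an isomorphism. Injectivity of $\Upsilon$ follows.

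\smallskip

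\noindent Uniqueness is automatic since $\Upsilon$ is specified on a generating set of $Y(\gl(V))$. The hard part is Step 2 (the quartic Serre verification); the rest is routine sign-tracking in the super extension of known arguments.
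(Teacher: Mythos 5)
Your overall strategy --- show the Gauss decomposition coefficients satisfy the Drinfeld relations to get a well-defined surjection, then pass to the associated graded of the loop filtration to get injectivity --- is precisely the Brundan--Kleshchev/Gow route that the paper's proof cites via~\cite[\S5]{bk} and~\cite[Theorem~3]{g2}. However, there are two concrete issues in the details.

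The ``rank-3 reduction'' in Step~2 does not work as stated. The Gauss decomposition coefficients are given by quasi-determinants: $E_{j-1}^{(r)}$, $D_{j-1}^{(s)}$, etc.\ are polynomials in $t_{kl}^{(m)}$ for \emph{all} $k,l\leq j$, so for generic $j$ they do not lie in the RTT subsuperalgebra generated by $\{t_{kl}^{(m)}: j-1\leq k,l\leq j+2\}$. The actual reduction to low rank in~\cite{bk,g2,p2} is accomplished via the shift homomorphisms $\psi_m$ (resp.\ the parabolic presentations), which identify the Drinfeld-type generators built from a corner of $T(z)$ with the shifted generators of a smaller RTT (super) Yangian; this compatibility is a substantive lemma, not an observation, and without it the ``reduce to $\gl_4$ and compute'' step has no justification.

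In Step~3, ``degenerate to (at most) the loop algebra relations'' should read ``at least.'' The point --- exactly that of \refr{relevance of degree 4 Serre } --- is that the top-degree parts of the Drinfeld relations, \emph{including} the quartic Serre relations~\refe{Dr Yangian 11},~\refe{Dr Yangian 12}, reproduce the full Chevalley--Serre presentation of $\gl(V)$ from~\cite{lss}, looped. This furnishes the surjection $U(\gl(V)[t])\twoheadrightarrow \gr Y(\gl(V))$; composing with $\gr\Upsilon$ is the identity on Chevalley loop generators, hence both maps are isomorphisms and $\Upsilon$ is injective. Without this use of the $\gl(V)$ presentation, the quartic Serre relations would have no role in the injectivity argument, which is precisely why the paper emphasizes them there.
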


\begin{proof}
The proof is completely analogous to that in the classical case (when $n_-=0$)
presented in~\cite[\S5]{bk}; see~\cite[Theorem 3]{g2} for the particular case
of~(\ref{distinguished Dynkin}).
\end{proof}

\begin{Rem}\label{relevance of degree 4 Serre }
The presence of the quartic Serre relations~(\ref{Dr Yangian 11},~\ref{Dr Yangian 12})
is solely due to the fact that they also appear among the defining relations of the
Lie superalgebra $\gl(V)$ via the Chevalley generators~\cite{lss}
(see the argument right after~\cite[(59)]{g2}).
\end{Rem}

\begin{Rem}\label{isomorphism of Drinfeld yangians gl}
Theorem~\ref{Yangian Gauss decomposition} together with
Lemma~\ref{isomorphism of RTT yangians} implies
Theorem~\ref{Drinfeld super yangians gl are iso}.
\end{Rem}

\subsection{The RTT super Yangians of $\ssl(V), A(V)$}\label{ssec RTT super Yangian sl}
\

For any formal power series $f(z)\in 1+z^{-1}\BC[[z^{-1}]]$, the assignment
\begin{equation}\label{yangian automorphims}
  \mu_f\colon T(z)\mapsto f(z)T(z)
\end{equation}
defines a superalgebra automorphism $\mu_f$ of $Y^\rtt(\gl(V))$.
Following~\cite{mno}, define the \emph{RTT super Yangian of $\ssl(V)$},
denoted by $Y^\rtt(\ssl(V))$, as the $\BC$-subalgebra of $Y^\rtt(\gl(V))$ via
\begin{equation}\label{RTT sl-super-Yangian}
  Y^\rtt(\ssl(V)):=\{y\in Y^\rtt(\gl(V))|\mu_f(y)=y\ \mathrm{for\ all}\ f\}.
\end{equation}
In the particular case~(\ref{distinguished Dynkin}), this recovers
the super Yangian $Y^\rtt(\ssl_{n_+|n_-})$ of~\cite[\S8]{g2}.

In view of Lemma~\ref{isomorphism of RTT yangians}, we immediately obtain:

\begin{Cor}\label{isomorphism of RTT yangians sl}
The superalgebra $Y^\rtt(\ssl(V))$ depends only on $(n_+,n_-)$, up to
an isomorphism. In particular, $Y^\rtt(\ssl(V))$ is isomorphic to
the super Yangian $Y^\rtt(\ssl_{n_+|n_-})$ of~\cite{g2}.
\end{Cor}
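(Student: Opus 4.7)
The plan is to deduce the corollary directly from Lemma~\ref{isomorphism of RTT yangians} by checking that the isomorphism constructed there is compatible with the family of automorphisms $\{\mu_f\}$ that carves out $Y^\rtt(\ssl(V))$ inside $Y^\rtt(\gl(V))$.

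Concretely, given a second superspace $V'$ with $\dim V'_{\bar 0}=n_+$ and $\dim V'_{\bar 1}=n_-$, choose a permutation $\sigma\in\Sigma_n$ matching the parities of the bases of $V$ and $V'$, and let
\begin{equation*}
  \Phi\colon Y^\rtt(\gl(V))\iso Y^\rtt(\gl(V')),\qquad
  t^{(r)}_{ij}\mapsto t'^{(r)}_{\sigma(i),\sigma(j)},
\end{equation*}
be the superalgebra isomorphism produced in the proof of Lemma~\ref{isomorphism of RTT yangians}. Writing this in generating-series form, we have $\Phi(t_{ij}(z))=t'_{\sigma(i),\sigma(j)}(z)$. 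For any scalar power series $f(z)\in 1+z^{-1}\BC[[z^{-1}]]$, this immediately gives
\begin{equation*}
  \Phi\bigl(\mu_f(t_{ij}(z))\bigr)=f(z)\,t'_{\sigma(i),\sigma(j)}(z)=\mu'_f\bigl(\Phi(t_{ij}(z))\bigr),
\end{equation*}
where $\mu'_f$ denotes the analogue of~\eqref{yangian automorphims} for $Y^\rtt(\gl(V'))$. Since the generators $t^{(r)}_{ij}$ generate $Y^\rtt(\gl(V))$, this proves the intertwining identity $\Phi\circ \mu_f=\mu'_f\circ \Phi$ for every such $f$.

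Consequently, $\Phi$ sends the subalgebra of $\mu_f$-invariants to the subalgebra of $\mu'_f$-invariants, and its inverse does the reverse. By the very definition~\eqref{RTT sl-super-Yangian}, this means $\Phi$ restricts to a superalgebra isomorphism $Y^\rtt(\ssl(V))\iso Y^\rtt(\ssl(V'))$. Specializing $V'$ to the distinguished choice~\eqref{distinguished Dynkin} recovers $Y^\rtt(\ssl_{n_+|n_-})$ of~\cite[\S8]{g2}, yielding the second assertion. No serious obstacle is expected; the only point requiring care is the verification that $\Phi$ commutes with all $\mu_f$, which is immediate once one uses the generating-series formulation.
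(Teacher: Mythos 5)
Your proof is correct and takes essentially the same route as the paper: the paper simply notes the corollary follows ``in view of Lemma~\ref{isomorphism of RTT yangians},'' and your argument supplies the mechanism accurately—the permutation isomorphism $\Phi$ of that lemma intertwines $\mu_f$ and $\mu'_f$ in generating-series form and therefore restricts to the invariant subalgebras from~\eqref{RTT sl-super-Yangian}.
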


Explicitly, this subalgebra can be described as follows:

\begin{Lem}\label{explicit RTT sl}
$Y^\rtt(\ssl(V))$ is generated by coefficients of
$D_i(z)^{-1}D_{i+1}(z), E_{i,i+1}(z), F_{i+1,i}(z)$.
\end{Lem}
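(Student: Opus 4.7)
The plan is to establish both inclusions separately. For the easy direction ($\supseteq$), observe that the Gauss decomposition $T(z) = F(z)D(z)E(z)$ is uniquely determined by $T(z)$, since $F(z), E(z)$ are constrained to have $1$'s on the diagonal. Under $\mu_f$ we have $T(z) \mapsto f(z)T(z) = F(z) \cdot (f(z)D(z)) \cdot E(z)$, because $f(z) \in 1 + z^{-1}\BC[[z^{-1}]]$ is a scalar series commuting with all matrix coefficients. Uniqueness then forces $F(z), E(z)$ to be $\mu_f$-invariant while $D(z) \mapsto f(z)D(z)$, whence every coefficient of $H_i(z) := D_i(z)^{-1}D_{i+1}(z)$, $E_{i,i+1}(z)$, and $F_{i+1,i}(z)$ is $\mu_f$-invariant, yielding the inclusion of the subalgebra they generate into $Y^\rtt(\ssl(V))$.

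For the reverse inclusion ($\subseteq$), I will transport to the Drinfeld picture via the isomorphism $\Upsilon$ of \reft{Yangian Gauss decomposition} and invoke a triangular decomposition $Y(\gl(V)) = Y^- \cdot Y^0 \cdot Y^+$. Here $Y^0$ is the commutative subalgebra generated by $d_j^{(s)}, \wt d_j^{(s)}$ (commutativity coming from \refe{Dr Yangian 1}), while $Y^\pm$ are generated by the $f_i^{(r)}$'s and $e_i^{(r)}$'s respectively. For any $y \in Y^\rtt(\ssl(V))$, expanding $y = \sum_k F_k D_k E_k$ via this decomposition and noting that $\mu_f$ acts trivially on $Y^\pm$ (since $e_i^{(r)}, f_i^{(r)}$ correspond under $\Upsilon$ to the $\mu_f$-invariant $E_{i,i+1}^{(r)}, F_{i+1,i}^{(r)}$) while altering only the middle factor, the uniqueness of the triangular decomposition forces each $D_k$ to be $\mu_f$-invariant in $Y^0$.

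It then suffices to identify the $\mu_f$-invariants inside $Y^0$. Since $Y^0$ is a polynomial algebra in $\{D_j^{(s)}\}_{1 \leq j \leq n,\, s \geq 1}$, I would perform the change of variables $\{D_j^{(s)}\} \rightsquigarrow \{D_1^{(s)}, H_i^{(s)}\}_{1 \leq i < n,\, s \geq 1}$ via the recursion $D_{j+1}(z) = D_j(z) H_j(z)$. Under $\mu_f$, each $H_i(z)$ is fixed because the $f(z)$ factors cancel by commutativity of $D_i(z), D_{i+1}(z)$, while $D_1(z) \mapsto f(z) D_1(z)$; as $f$ ranges freely over $1 + z^{-1}\BC[[z^{-1}]]$, the invariant subalgebra of $Y^0$ equals $\BC[H_i^{(s)}]_{1 \leq i < n,\, s \geq 1}$. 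Combined with \refl{higher root modes} (ensuring that higher-root vectors appearing in $Y^\pm$ are already expressible via the simple generators $E_i^{(r)} = E_{i,i+1}^{(r)}$ and $F_i^{(r)} = F_{i+1,i}^{(r)}$), this shows that every $\mu_f$-invariant $y$ lies in the subalgebra generated by coefficients of $H_i(z), E_{i,i+1}(z), F_{i+1,i}(z)$.

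The principal obstacle is the triangular decomposition of $Y(\gl(V))$, which is only formally stated in \refss{triangular decomposition}. I would bypass this dependency by first establishing the analogous decomposition at the level of the associated graded $\gr_2 Y^\rtt(\gl(V)) \simeq U(\gl(V)[t])$ from \refl{associated graded}(b), where the classical triangular decomposition is immediate and the $\mu_f$-action descends to an evident scaling action, then lifting the conclusion back to $Y^\rtt(\gl(V))$ via a standard filtration-induction argument.
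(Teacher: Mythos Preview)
Your approach is essentially the one the paper (via \cite[Lemma 7]{g2}) has in mind: invariance of the Gauss factors $E(z),F(z)$ gives the easy inclusion, while the hard inclusion reduces via the triangular decomposition $Y^{-}\cdot Y^{0}\cdot Y^{+}$ to identifying $(Y^{0})^{\mu_f}$, which one checks equals the polynomial algebra in the coefficients of $H_i(z)=D_i(z)^{-1}D_{i+1}(z)$. So the overall strategy is correct and matches the paper.

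One inaccuracy in your final paragraph deserves flagging. The action of $\mu_f$ does \emph{not} descend to a ``scaling action'' on $\gr_2 Y^\rtt(\gl(V))$: since $\mu_f(t_{ij}^{(r)})=t_{ij}^{(r)}+\sum_{s\geq 1} f_s\, t_{ij}^{(r-s)}$ and $\deg_2(t_{ij}^{(r-s)})<\deg_2(t_{ij}^{(r)})$ for $s\geq 1$, the induced action on the associated graded is the \emph{identity}. Hence you cannot compute the $\mu_f$-invariants on $\gr_2$ and lift. This is harmless if, as I read your intent, you only use $\gr_2$ to establish the PBW/triangular decomposition and then run the invariant analysis in $Y^0$ at the filtered level; but the sentence as written suggests otherwise. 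For the actual computation of $(Y^0)^{\mu_f}$, the cleanest route is to pass to logarithms $L_j(z):=\log D_j(z)$, so that $\mu_f$ acts by the translations $L_j^{(s)}\mapsto L_j^{(s)}+(\log f)_s$; the translation-invariants in $\BC[L_1^{(s)},L_{i+1}^{(s)}-L_i^{(s)}]$ are then visibly $\BC[L_{i+1}^{(s)}-L_i^{(s)}]=\BC[H_i^{(s)}]$.
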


\begin{proof}
Completely analogous to the proof of~\cite[Lemma 7]{g2}
for the particular case of~(\ref{distinguished Dynkin}).
\end{proof}

\begin{Def}\label{charge}
Define the \emph{charge} $c(V)\in \BZ$ of $V$ via
$c(V):=n_+-n_-=\dim(V_{\bar{0}})-\dim(V_{\bar{1}})$.
\end{Def}

If $V$ has a nonzero charge, then $Y^\rtt(\ssl(V))$ also may be realized
as a quotient of $Y^\rtt(\gl(V))$. For the latter construction, let us
first obtain an explicit description of the center $ZY^\rtt(\gl(V))$ of
$Y^\rtt(\gl(V))$. Following~\cite{g1}, define the \emph{quantum Berezinian}
$b(z)\in Y^\rtt(\gl(V))[[z^{-1}]]$ via
\begin{equation}\label{Berezinian}
  b(z):=1+\sum_{r\geq 1} b_rz^{-r}=D'_1(z_1)D'_2(z_2)\cdots D'_n(z_n),
\end{equation}
where
  $D'_i(z):=\begin{cases}
     D_i(z), & \text{if } \ol{i}=\bar{0}\\
     D_i(z)^{-1}, & \text{if } \ol{i}=\bar{1}
   \end{cases}$,
while $z_1=z$ and
  $z_{i+1}=\begin{cases}
     z_i+c_{i,i+1}, & \text{if } |\alpha_i|=\bar{0}\\
     z_i, & \text{if } |\alpha_i|=\bar{1}
   \end{cases}$.

\begin{Rem}
For the distinguished Dynkin diagram, that is for~(\ref{distinguished Dynkin}),
this definition recovers the original \emph{quantum Berezinian} of~\cite[\S2]{na},
due to the main result (Theorem 1) of~\cite{g1}.
\end{Rem}

\begin{Thm}\label{center of super Yangian}
(a) The elements $\{b_r\}_{r\geq 1}$ are central.

\noindent
(b) The elements $\{b_r\}_{r\geq 1}$ are algebraically independent,
and generate the center $ZY^\rtt(\gl(V))$. In other words, we have
an algebra isomorphism $ZY^\rtt(\gl(V))\simeq \BC[b_1,b_2,\ldots]$.
\end{Thm}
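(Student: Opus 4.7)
For part (a), the plan is to reduce centrality of $b(z)$ to a local calculation. By relation~\refe{Dr Yangian 1} (transferred via the isomorphism $\Upsilon$ of \reft{Yangian Gauss decomposition}), the $D_j(z)$ pairwise commute, so the ordered product~\refe{Berezinian} is well defined. By \refc{surjectivity of upsilon}, centrality then reduces to $[b(z), E_i^{(r)}] = 0 = [b(z), F_i^{(r)}]$ for every $i \in I$, $r \ge 1$. Using the Drinfeld relations~\refe{Dr Yangian 2}--\refe{Dr Yangian 3}, among the $n$ factors $D'_j(z_j)$ in~\refe{Berezinian} only those with $j \in \{i,i+1\}$ fail to commute with $e_i(w)$, so the commutator $[b(z), e_i(w)]$ collapses to a two-factor identity involving $D'_i(z_i)\cdot D'_{i+1}(z_{i+1})$. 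A case analysis across the four parity choices $(\bar{i},\overline{i+1}) \in \BZ_2^2$ should show that the prescribed shift $z_{i+1} - z_i \in \{c_{i,i+1}, 0\}$ is exactly what is needed for the contributions of $[D'_i(z_i), e_i(w)]$ and $[D'_{i+1}(z_{i+1}), e_i(w)]$ to cancel; a symmetric argument will handle $F_i^{(r)}$. For the distinguished Dynkin diagram this specializes to Nazarov's original computation via~\cite[Theorem 1]{g1}.

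For algebraic independence in part (b), I would pass to $\gr_1 Y^\rtt(\gl(V)) \cong \BC[\mathsf{t}_{ij}^{(r)}]$ of \refl{associated graded}(a). The Gauss decomposition yields $D_i(z) \equiv t_{ii}(z)$ modulo strictly lower $\deg_1$-terms, so the degree-$r$ symbol of $b_r$ equals the supertrace $\sum_{i=1}^n (-1)^{\bar{i}} \mathsf{t}_{ii}^{(r)}$, perturbed by polynomials in symbols of $\deg_1 < r$. These supertraces are manifestly algebraically independent in the polynomial superalgebra $\BC[\mathsf{t}_{ij}^{(r)}]$, hence so is the family $\{b_r\}_{r \ge 1}$ in $Y^\rtt(\gl(V))$.

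For generation, I would instead pass to $\gr_2 Y^\rtt(\gl(V)) \cong U(\gl(V)[t])$ of \refl{associated graded}(b). Centrality is preserved under the $\deg_2$-symbol map, and the images of $\{b_r\}_{r \ge 1}$ should recover the standard loop-supertrace generators of the center of $U(\gl(V)[t])$, which is a polynomial algebra (classical Harish--Chandra-type theorem for current Lie superalgebras, analogous to~\cite{mno} in the non-super case). A standard filtered-to-graded comparison then forces the reverse inclusion $ZY^\rtt(\gl(V)) \subseteq \BC[b_1, b_2, \ldots]$, completing part (b). The main obstacle will be the parity-case bookkeeping in part (a); the associated-graded arguments in (b) are by now routine, in the spirit of~\cite{mno,g2}.
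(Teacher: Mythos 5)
Your strategy matches the paper's. For part (a) the paper likewise reduces to $[b(z),E_i(w)]=0=[b(z),F_i(w)]$, observes that only $D'_i,D'_{i+1}$ fail to commute with $e_i(w)$, and performs the local cancellation; it uses two cases ($|\alpha_i|=\bar0$ and $|\alpha_i|=\bar1$) rather than four, because the relations~\refe{Dr Yangian 2},~\refe{Dr Yangian 3} already track the sign $(-1)^{\ol i}$ uniformly, so splitting further by $(\ol i,\ol{i+1})$ is unnecessary. For part (b) the paper simply cites~\cite{mno},~\cite{bk},~\cite{g2}, and your $\gr_1$/$\gr_2$ outline is indeed the content of those references.

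One small inaccuracy worth fixing: you call the $\gr_2$-symbols of the $b_r$ the ``loop-supertrace generators,'' but the supertrace $\sum_i(-1)^{\ol i}E_{ii}$ is \emph{not} central in the Lie superalgebra $\gl(V)$. Under the isomorphism of \refl{associated graded}(b) the generator $t^{(r)}_{ij}$ goes to $(-1)^{\ol i}E_{ij}t^{r-1}$, and this extra sign cancels the $(-1)^{\ol i}$ in your $\gr_1$-formula; the $\gr_2$-symbol of $b_r$ is therefore $\big(\sum_i E_{ii}\big)t^{r-1}=I\,t^{r-1}$, the loop-\emph{identity} element, which is the honest central generator of $Z\big(U(\gl(V)[t])\big)$. (Your $\gr_1$ claim is correct as stated; also, the ``perturbation'' there consists of symbols of polynomial degree $\ge 2$ of the same $\deg_1$-weight $r$, plus linear symbols of lower $\deg_1$ coming from the $z_i$-shifts --- not only ``symbols of $\deg_1<r$'' --- but this does not affect the algebraic-independence conclusion.)
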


\begin{proof}
(a) To prove that all $b_r$ are central, it suffices to verify
that $[b(z),E_i(w)]=0=[b(z),F_i(w)]$ for any $1\leq i<n$. We shall
check only the first equality (the second is analogous).

\emph{Case 1: $|\alpha_i|=\bar{0}$}.
Due to the isomorphism of Theorem~\ref{Yangian Gauss decomposition}
and the relation~(\ref{Dr Yangian 2}), we have
\begin{equation}\label{auxillary 1}
  (u-v)E_i(v)D_i(u)=(u-v-(-1)^{\ol{i}})D_i(u)E_i(v)+(-1)^{\ol{i}}D_i(u)E_i(u),
\end{equation}
\begin{equation}\label{auxillary 2}
  (w-v)E_i(v)D_{i+1}(w)=
  (w-v+(-1)^{\ol{i+1}})D_{i+1}(w)E_i(v)-(-1)^{\ol{i+1}}D_{i+1}(w)E_i(w).
\end{equation}
Plugging $v=u,w=u-(-1)^{\ol{i}}$ into~(\ref{auxillary 2}) and using
$\ol{i}=\ol{i+1}$ (as $|\alpha_i|=\bar{0}$), we get
\begin{equation}\label{auxillary 3}
  E_i(u)D_{i+1}(u-(-1)^{\ol{i}})=D_{i+1}(u-(-1)^{\ol{i}})E_i(u-(-1)^{\ol{i}}).
\end{equation}
Due to~(\ref{auxillary 1}--\ref{auxillary 3}):
  $(u-v)E_i(v)D_i(u)D_{i+1}(u-(-1)^{\ol{i}})=(u-v)D_i(u)D_{i+1}(u-(-1)^{\ol{i}})E_i(v)$.
Hence, $[b(z),E_i(w)]=0$ as $c_{i,i+1}=-(-1)^{\ol{i+1}}=-(-1)^{\ol{i}}$ and
$[E_i(v),D_j(u)]=0$ for $j\ne i,i+1$.

\emph{Case 2: $|\alpha_i|=\bar{1}$}.
In this case, $(-1)^{\ol{i+1}}=-(-1)^{\ol{i}}$ and the equality~(\ref{auxillary 2}) is equivalent to
\begin{equation}\label{auxillary 4}
  (w-v)D_{i+1}(w)^{-1}E_i(v)=
  (w-v-(-1)^{\ol{i}})E_i(v)D_{i+1}(w)^{-1}+(-1)^{\ol{i}}E_i(w)D_{i+1}(w)^{-1}.
\end{equation}
Combining the equalities~(\ref{auxillary 1},~\ref{auxillary 4}), we immediately obtain
  $(u-v)E_i(v)D_i(u)D_{i+1}(u)^{-1}=(u-v)D_i(u)D_{i+1}(u)^{-1}E_i(v)$.
Hence, $[b(z),E_i(w)]=0$ as $[E_i(v),D_j(u)]=0$ for $j\ne i,i+1$.

This completes the proof of part (a).

\noindent
(b) The proof of part (b) is analogous to that of~\cite[Theorem 2.13]{mno}
and~\cite[Theorem 7.2]{bk} in the classical case (when $n_-=0$), and
of~\cite[Theorem 4]{g2} for the particular case~(\ref{distinguished Dynkin}).
\end{proof}

Similar to the classical case (when $n_-=0$) treated in~\cite{mno} as well as the
particular case~(\ref{distinguished Dynkin}) treated in~\cite{g2}, we have:

\begin{Thm}\label{gl vs sl and center}
(a) If $c(V)\ne 0$, then we have a superalgebra isomorphism
\begin{equation}\label{RTT gln vs sln}
  Y^\rtt(\gl(V))\simeq Y^\rtt(\ssl(V))\otimes ZY^\rtt(\gl(V)).
\end{equation}

\noindent
(b) If $c(V)=0$, then $ZY^\rtt(\gl(V))\subset Y^\rtt(\ssl(V))$.
\end{Thm}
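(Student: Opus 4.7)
The proof, modeled on \cite{mno} (classical case) and \cite[\S8]{g2} (distinguished diagram), hinges on computing how $b(z)$ transforms under the automorphisms $\mu_f$ from (\ref{yangian automorphims}). By uniqueness of the Gauss decomposition $T(z) = F(z)D(z)E(z)$ with $F$ and $E$ unitriangular, $\mu_f$ fixes $F$ and $E$ and sends $D_i(z) \mapsto f(z) D_i(z)$. Setting $\sigma_i := (-1)^{\ol i}$, one thus obtains the scaling law
\[
  \mu_f(b(z)) = \Big(\prod_{i=1}^n f(z_i)^{\sigma_i}\Big) \cdot b(z).
\]

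For part~(b), the plan is to show that when $c(V) = 0$ this factor equals $1$ identically in $f$, equivalently $\sum_i \sigma_i \delta_{z_i} = 0$ as a measure on $\BZ$. I would argue by induction on $n$: when $c(V) = 0$ and $n > 0$, there must exist $i$ with $|\alpha_i| = \bar 1$, hence $\sigma_i = -\sigma_{i+1}$ and $z_i = z_{i+1}$. Deleting $\sfv_i$ and $\sfv_{i+1}$ yields a superspace $V'$ of charge $0$, and a short case analysis on the parities of $|\alpha_{i-1}|, |\alpha_{i+1}| \in \{\bar 0, \bar 1\}$ shows that the recursion defining the shifts produces, for $j \notin \{i, i+1\}$, the same $z_j$ in $V'$ (up to a uniform shift of base) as in $V$. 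The inductive step then removes the canceling pair $\sigma_i \delta_{z_i} + \sigma_{i+1} \delta_{z_{i+1}} = 0$ and applies the hypothesis. Hence $\mu_f(b(z)) = b(z)$ for all $f$, whence $b_r \in Y^\rtt(\ssl(V))$ for all $r$, and Theorem~\ref{center of super Yangian}(b) completes part~(b).

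For part~(a), assume $c(V) \ne 0$. The plan is to construct a central series $d(z) \in 1 + z^{-1} ZY^\rtt(\gl(V))[[z^{-1}]]$ satisfying $\prod_i d(z_i)^{\sigma_i} = b(z)^{-1}$. Writing $\log d(z) = \sum_{r\ge 1} \delta_r z^{-r}$ and expanding in $z^{-1}$, the coefficient of $z^{-r}$ on the left-hand side equals $c(V) \cdot \delta_r$ modulo a polynomial in $\delta_1, \ldots, \delta_{r-1}$, so $\delta_r \in ZY^\rtt(\gl(V))$ is uniquely determined recursively from the coefficients of $\log b(z)^{-1}$. Setting $\wt T(z) := d(z) T(z)$, centrality of $d(z)$ preserves the RTT relation (\ref{RTT matrix form}), and the quantum Berezinian of $\wt T$ equals $1$ by construction. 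Applying the scaling law above to $\wt T$ then shows $\mu_f(\wt t_{ij}^{(r)}) = \wt t_{ij}^{(r)}$, so the coefficients of $\wt T(z)$ lie in $Y^\rtt(\ssl(V))$; an analog of Lemma~\ref{explicit RTT sl} shows they generate it.

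Finally, since $ZY^\rtt(\gl(V))$ is central, multiplication is a well-defined algebra homomorphism
\[
  m\colon Y^\rtt(\ssl(V)) \otimes ZY^\rtt(\gl(V)) \longrightarrow Y^\rtt(\gl(V)).
\]
Surjectivity follows from $T(z) = d(z)^{-1} \wt T(z)$, which expresses each $t^{(r)}_{ij}$ as a combination of $\wt t^{(s)}_{kl}$ and coefficients of $d(z)^{-1}$ (all central). Injectivity follows by passing to the associated graded $\gr_1$ from Lemma~\ref{associated graded}(a), where $m$ becomes the standard decomposition of the polynomial superalgebra $\BC[\mathsf t^{(r)}_{ij}]$ along the supertrace direction, valid precisely because $c(V) \ne 0$. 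The main obstacle I anticipate is the case-by-case induction in part~(b), together with verifying that the $\wt t_{ij}^{(r)}$ generate all of $Y^\rtt(\ssl(V))$ (and not merely a subalgebra) in part~(a).
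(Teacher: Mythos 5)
Your proposal is correct and follows essentially the same route as the paper: the paper proves (b) by observing that the multiset identity $\{z_i\mid\ol{i}=\bar0\}=\{z_i\mid\ol{i}=\bar1\}$ holds when $n_+=n_-$ (equivalent to your $\sum_i\sigma_i\delta_{z_i}=0$, which you prove by removing a cancelling odd pair and inducting), and for (a) it simply cites the argument of Gow [G2, Prop.~3], which is precisely the factorization $T(z)=d(z)^{-1}\wt T(z)$ with $d$ central and $\wt T$ of trivial Berezinian that you reconstruct. One small step to make airtight: to conclude $\mu_f(\wt T(z))=\wt T(z)$ you also need $\mu_f(d(z))=f(z)^{-1}d(z)$, which follows because the transform $g\mapsto\prod_i g(z_i)^{\sigma_i}$ is injective on $1+z^{-1}R[[z^{-1}]]$ when $c(V)\ne0$ (by the same leading-coefficient recursion you use to construct $d$); with that in place the argument is complete.
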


\begin{proof}
(a) Analogous to the proof of~\cite[Proposition 3]{g2}
for the particular case of~(\ref{distinguished Dynkin}).

\noindent
(b) If $n_+=n_-$, then $z_i$ defined after~(\ref{Berezinian}) satisfy
$\{z_i|\ol{i}=\bar{0}\}=\{z_i|\ol{i}=\bar{1}\}$. Hence, $\mu_f(b(z))=b(z)$
for all automorphisms~(\ref{yangian automorphims}). Thus,
$ZY^\rtt(\gl(V))\subset Y^\rtt(\ssl(V))$ by Theorem~\ref{center of super Yangian}.
\end{proof}

\begin{Cor}\label{sl as a factor of gl}
If $c(V)\ne 0$, then the isomorphism~(\ref{RTT gln vs sln}) gives rise
to a natural epimorphism
  $\pi\colon Y^\rtt(\gl(V))\twoheadrightarrow Y^\rtt(\ssl(V))$
with $\Ker(\pi)=(b_1,b_2,\ldots)$.
\end{Cor}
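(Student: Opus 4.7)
The plan is to construct $\pi$ directly from the tensor product decomposition of Theorem~\ref{gl vs sl and center}(a) by pre-composing with the canonical augmentation map on the central factor. More precisely, by Theorem~\ref{center of super Yangian}(b), the center $ZY^\rtt(\gl(V))$ is the polynomial algebra $\BC[b_1,b_2,\ldots]$, hence it admits the augmentation morphism
\[
  \epsilon\colon ZY^\rtt(\gl(V))\twoheadrightarrow \BC,\qquad b_r\mapsto 0 \ (r\geq 1).
\]
Using the isomorphism $\Phi\colon Y^\rtt(\gl(V))\iso Y^\rtt(\ssl(V))\otimes ZY^\rtt(\gl(V))$ from~(\ref{RTT gln vs sln}), I would then define
\[
  \pi:= (\mathrm{id}\otimes \epsilon)\circ \Phi \colon Y^\rtt(\gl(V))\twoheadrightarrow Y^\rtt(\ssl(V)).
\]

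Next I would verify the required properties. Since $\Phi$ restricts to the identity on the subalgebra $Y^\rtt(\ssl(V))\subset Y^\rtt(\gl(V))$ (by the very construction of~(\ref{RTT gln vs sln})), the composition $\pi\circ\iota$ with the inclusion $\iota\colon Y^\rtt(\ssl(V))\hookrightarrow Y^\rtt(\gl(V))$ is the identity on $Y^\rtt(\ssl(V))$, so $\pi$ is a surjective superalgebra homomorphism. For the kernel, observe that $\ker(\mathrm{id}\otimes\epsilon)$ equals $Y^\rtt(\ssl(V))\otimes \ker(\epsilon)$, and $\ker(\epsilon)$ is the maximal ideal $(b_1,b_2,\ldots)\subset \BC[b_1,b_2,\ldots]$. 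Transporting back along $\Phi$, one gets $\ker(\pi)=\Phi^{-1}\bigl(Y^\rtt(\ssl(V))\otimes (b_1,b_2,\ldots)\bigr)$, which is the two-sided ideal of $Y^\rtt(\gl(V))$ generated by $\{b_r\}_{r\geq 1}$; the last identification uses that the $b_r$ are central, so the ideal they generate is simply $\sum_{r\geq 1} b_r\cdot Y^\rtt(\gl(V))$, and this matches the image of $Y^\rtt(\ssl(V))\otimes (b_1,b_2,\ldots)$ under $\Phi^{-1}$.

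There is no real obstacle here; the corollary is essentially a formal consequence of the splitting~(\ref{RTT gln vs sln}) together with the centrality of the $b_r$. The only point that requires a sentence of justification is why the ideal generated by the $b_r$ in $Y^\rtt(\gl(V))$ equals $\Phi^{-1}\bigl(Y^\rtt(\ssl(V))\otimes (b_1,b_2,\ldots)\bigr)$; both sides can be described as the set of finite sums $\sum_r b_r y_r$ with $y_r\in Y^\rtt(\gl(V))$, using once more that each $b_r$ is central and that $\Phi$ intertwines multiplication by $b_r$ with multiplication by $1\otimes b_r$.
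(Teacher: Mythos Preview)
Your proposal is correct and is exactly the argument the paper has in mind: the corollary is stated without proof, as an immediate consequence of the tensor decomposition~(\ref{RTT gln vs sln}) together with Theorem~\ref{center of super Yangian}(b), and you have simply spelled out the obvious construction via the augmentation $\epsilon$ on the polynomial center.
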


Recall that the classical Lie superalgebra $A(n_+ - 1, n_- - 1)$ coincides
with $\ssl(n_+|n_-)$ for $n_+\ne n_-$, and with the quotient $\ssl(n_+|n_-)/(I)$
for $n_+=n_-$, where $I=\sum_{i=1}^n E_{ii}$ is the central element.
Motivated by this and Theorem~\ref{gl vs sl and center}(b), if $c(V)=0$,
define the \emph{RTT super Yangian of $A(V)$}, denoted by $Y^\rtt(A(V))$,
via $Y^\rtt(A(V)):=Y^\rtt(\ssl(V))/(b_1,b_2,\ldots)$, cf.~\cite[(67)]{g2}.

\begin{Cor}\label{isomorphism of RTT yangians A-type}
$Y^\rtt(A(V))$ depends only on $n_+ = n_-$, up to an isomorphism.
\end{Cor}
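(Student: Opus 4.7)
The plan is to upgrade the isomorphism $Y^\rtt(\ssl(V))\iso Y^\rtt(\ssl(V'))$ of \refc{isomorphism of RTT yangians sl} to an isomorphism of the central quotients, by verifying that it identifies the ideals generated by the quantum Berezinian coefficients on the two sides.

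First, I would fix two superspaces $V,V'$ with $n_+=n'_+=n_-=n'_-$, so $c(V)=c(V')=0$, pick a parity-matching permutation $\sigma\in\Sigma_n$ as in the proof of \refl{isomorphism of RTT yangians}, and let $\varphi\colon Y^\rtt(\gl(V))\iso Y^\rtt(\gl(V'))$ be the induced isomorphism $t^{(r)}_{ij}\mapsto t^{(r)}_{\sigma(i),\sigma(j)}$. This manifestly commutes with the automorphisms $\mu_f$ of \eqref{yangian automorphims} on either side, so via the intrinsic characterization \eqref{RTT sl-super-Yangian}, $\varphi$ restricts to an isomorphism $Y^\rtt(\ssl(V))\iso Y^\rtt(\ssl(V'))$. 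Being an algebra isomorphism, $\varphi$ also sends $ZY^\rtt(\gl(V))$ onto $ZY^\rtt(\gl(V'))$, and both of these centers lie inside the corresponding $\ssl$-Yangians by \reft{gl vs sl and center}(b).

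The main step is to show that $\varphi$ carries the ideal $(b_1,b_2,\ldots)\subset Y^\rtt(\ssl(V))$ onto the ideal $(b'_1,b'_2,\ldots)\subset Y^\rtt(\ssl(V'))$, where $b'_r$ denotes the analogue of \eqref{Berezinian} for $V'$. By \reft{center of super Yangian}(b) these centers are the polynomial algebras $\BC[b_1,b_2,\ldots]$ and $\BC[b'_1,b'_2,\ldots]$. Since $\varphi$ preserves the scalar $1$, it identifies the maximal ideal $(b_r)_{r\geq 1}$ of the first polynomial algebra with the corresponding maximal ideal of the second; in particular each $\varphi(b_r)$ is a polynomial in the $b'_s$ with zero constant term, hence lies in $(b'_1,b'_2,\ldots)\,Y^\rtt(\ssl(V'))$. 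The analogous argument for $\varphi^{-1}$ yields the reverse inclusion, and passing to the quotients produces the desired isomorphism $Y^\rtt(A(V))\iso Y^\rtt(A(V'))$.

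I do not foresee any real obstacle: the argument is essentially formal once the intrinsic description of the center from \reft{center of super Yangian}(b) is combined with its containment in the $\ssl$-Yangian when $c(V)=0$. The one subtlety worth recording is that $\varphi$ need not literally send $b(z)$ to $b'(z)$, because their defining products in \eqref{Berezinian} depend on the chosen orderings of the basis vectors; however, only the ideals generated by the coefficients, not the Berezinians themselves, enter into the statement being proved.
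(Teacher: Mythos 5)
Your overall plan is the same as the paper's (the paper simply cites the isomorphism of Corollary~\ref{C:isomorphism of RTT yangians sl} together with the fact, analogous to~\cite[Corollary 2]{g2}, that $ZY^\rtt(\ssl(V))=\BC[b_1,b_2,\ldots]$, and leaves the descent to the central reductions implicit). Your write-up fleshes this out correctly except at one point.

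The step where you claim that $\varphi$ ``identifies the maximal ideal $(b_r)_{r\geq 1}$ of the first polynomial algebra with the corresponding maximal ideal of the second'' because ``$\varphi$ preserves the scalar $1$'' is not justified as written. Every unital algebra map preserves $1$, and a polynomial algebra $\BC[b_1,b_2,\ldots]$ has many maximal ideals; an abstract isomorphism of the centers could, a priori, send the augmentation ideal $(b_r)_{r\geq 1}$ to some other maximal ideal (compare $b_1\mapsto b_1+1$ on $\BC[b_1]$). What saves the argument is that your $\varphi$ is not an abstract isomorphism but the explicit map $t^{(r)}_{ij}\mapsto t^{(r)}_{\sigma(i),\sigma(j)}$; this evidently preserves the filtration $\deg_1$ of~\eqref{E:gradings}, equivalently the counit $\varepsilon\colon t^{(r)}_{ij}\mapsto 0$ $(r\geq 1)$. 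Under $\varepsilon$ one has $T(z)\mapsto 1$, hence $D_k(z)\mapsto 1$ and $b(z)\mapsto 1$, so $\varepsilon(b_r)=0$ and likewise $\varepsilon'(b'_r)=0$. Thus $\varphi$ restricts to an augmentation-preserving isomorphism of the centers, which does carry $(b_1,b_2,\ldots)$ onto $(b'_1,b'_2,\ldots)$. With that replacement for the ``preserves scalar $1$'' sentence, your proof is correct and coincides with the paper's.
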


\begin{proof}
Similar to~\cite[Corollary 2]{g2}, the center $ZY^\rtt(\ssl(V))$ of
$Y^\rtt(\ssl(V))$ is a polynomial algebra in $\{b_r\}_{r=1}^\infty$.
Combining this with Corollary~\ref{isomorphism of RTT yangians sl}
implies the result.
\end{proof}


\subsection{The Drinfeld super Yangian of $\ssl(V)$}\label{ssec super Yangian sl}
\

Following~\cite{d} (cf.~\cite{s}\footnote{As noticed in~\cite{g2},
the relation~(\ref{Dr Yangian sl 6 generalized}) should replace the
wrong quartic Serre relations of~\cite[Definition~2]{s}.} and~\cite{g2}),
define the \emph{Drinfeld super Yangian of $\ssl(V)$}, denoted by $Y(\ssl(V))$,
to be the associative $\BC$-superalgebra generated by
$\{h_{i,r}, \sx^\pm_{i,r}\}_{1\leq i<n}^{r\geq 0}$ with the
$\BZ_2$-grading $|h_{i,r}|=\bar{0}$, $|\sx^\pm_{i,r}|=|\alpha_i|$,
and subject to the following defining relations:
\begin{equation}\label{Dr Yangian sl 1}
  [h_{i,r}, h_{j,s}]=0,
\end{equation}
\begin{equation}\label{Dr Yangian sl 2.1}
  [h_{i,0},\sx^\pm_{j,s}]=\pm c_{ij} \sx^\pm_{j,s},
\end{equation}
\begin{equation}\label{Dr Yangian sl 2.2}
  [h_{i,r+1},\sx^\pm_{j,s}]-[h_{i,r},\sx^\pm_{j,s+1}]=
  \pm\frac{c_{ij}}{2}\{h_{i,r},\sx^\pm_{j,s}\}\
  \mathrm{unless}\ i=j\ \mathrm{and}\ |\alpha_i|=\bar{1},
\end{equation}
\begin{equation}\label{Dr Yangian sl 2.3}
  [h_{i,r},\sx^\pm_{i,s}]=0 \ \mathrm{if}\ |\alpha_i|=\bar{1},
\end{equation}
\begin{equation}\label{Dr Yangian sl 3}
  [\sx^+_{i,r},\sx^-_{j,s}]=\delta_{i,j}h_{i,r+s},
\end{equation}
\begin{equation}\label{Dr Yangian sl 4.1}
  [\sx^\pm_{i,r+1},\sx^\pm_{j,s}]-[\sx^\pm_{i,r},\sx^\pm_{j,s+1}]=
  \pm\frac{c_{ij}}{2}\{\sx^\pm_{i,r},\sx^\pm_{j,s}\}\
  \mathrm{unless}\ i=j\ \mathrm{and}\ |\alpha_i|=\bar{1},
\end{equation}
\begin{equation}\label{Dr Yangian sl 4.2}
  [\sx^\pm_{i,r},\sx^\pm_{j,s}]=0 \ \mathrm{if}\ c_{ij}=0,
\end{equation}
as well as cubic Serre relations
\begin{equation}\label{Dr Yangian sl 5}
  [\sx^\pm_{i,r},[\sx^\pm_{i,s},\sx^\pm_{j,t}]]+
  [\sx^\pm_{i,s},[\sx^\pm_{i,r},\sx^\pm_{j,t}]]=0
  \ \mathrm{if}\ j=i\pm 1\ \mathrm{and}\ |\alpha_i|=\bar{0},
\end{equation}
and quartic Serre relations
\begin{equation}\label{Dr Yangian sl 6}
  [[\sx^\pm_{j-1,r},\sx^\pm_{j,0}],[\sx^\pm_{j,0},\sx^\pm_{j+1,s}]]=0
  \ \mathrm{if}\ |\alpha_j|=\bar{1}\ \mathrm{and}\ |\alpha_{j-1}|=|\alpha_{j+1}|=\bar{0}.
\end{equation}

\begin{Rem}\label{Serre hold always 2}
(a) Similar to Remark~\ref{Serre hold always}, Serre
relations~(\ref{Dr Yangian sl 5}) and~(\ref{Dr Yangian sl 6}) also
hold for all other parities, but in those cases, they already follow
from~(\ref{Dr Yangian sl 4.1},~\ref{Dr Yangian sl 4.2})
and~(\ref{Dr Yangian sl 4.1},~\ref{Dr Yangian sl 4.2},~\ref{Dr Yangian sl 5}).

\noindent
(b) Generalizing the quartic Serre relations~(\ref{Dr Yangian sl 6}),
the following relations also hold:
\begin{equation}\label{Dr Yangian sl 6 generalized}
  [[\sx^\pm_{j-1,r},\sx^\pm_{j,k}],[\sx^\pm_{j,l},\sx^\pm_{j+1,s}]]+
  [[\sx^\pm_{j-1,r},\sx^\pm_{j,l}],[\sx^\pm_{j,k},\sx^\pm_{j+1,s}]]=0.
\end{equation}
One way to prove this is to use the classical argument of deducing all Serre
relations from the basic ones by commuting the latter with certain Cartan elements.
Let $Q^\pm_j(r;k,l;s)$ denote the left-hand side of~(\ref{Dr Yangian sl 6 generalized}).
Our goal is to prove $Q^\pm_j(r;k,l;s)=0$ for any $r,k,l,s\geq 0$, while we know it
only for $k=l=0$ and $r,s\geq 0$, due to~(\ref{Dr Yangian sl 6}) and Remark~\ref{Serre hold always 2}(a).
Define the elements $\{t_{i,r}\}_{1\leq i<n}^{r\geq 0}$ of $Y(\ssl(V))$ via
\begin{equation}\label{t-elements}
  \sum_{r\geq 0} t_{i,r}u^{-r-1}=\log\left(1+\sum_{r\geq 0} h_{i,r}u^{-r-1}\right),
\end{equation}
cf.~(\ref{Dr Yangian sl 1}). The relations~(\ref{Dr Yangian sl 2.1},~\ref{Dr Yangian sl 2.2})
imply the following commutation relations:
\begin{equation}\label{shift operator}
  [t_{i,r},\sx^\pm_{j,s}]=\pm c_{ij}
  \sum_{l=0}^{[r/2]} \binom{r}{2l} \frac{(c_{ij}/2)^{2l}}{2l+1}\sx^\pm_{j,r+s-2l},
\end{equation}
cf.~\cite[Remark of \S2.9]{gtl}.
Commuting both sides of the equality $Q^\pm_j(r;0,0;s)=0$ with $t_{j-1,k}$
and using~(\ref{shift operator}), one obtains $Q^\pm_j(r;k,0;s)=0$ by
an induction in $k$. Commuting the latter equality with $t_{j-1,l}$,
one derives the desired equality $Q^\pm_j(r;k,l;s)=0$ by an induction in $l$.
Another way to prove~(\ref{Dr Yangian sl 6 generalized}) is to verify
the corresponding equality on the shuffle side, see
Section~\ref{sec shuffle for superYangians} (though we set $\hbar=1$ for the current purpose).
According to Corollary~\ref{Injectivity of Psi}, it suffices to prove
$\Psi(Q^+_j(r;k,l;s))=0$. The latter follows from the obvious equality
\begin{equation}\label{elegant deduction of higher Serre}
   \Psi(Q^+_j(r;k,l;s))=
   (x_{j,1}^{k}x_{j,2}^{l}+x_{j,1}^{l}x_{j,2}^{k})\Psi(Q^+_j(r;0,0;s))
\end{equation}
in the notations of \emph{loc.cit.}, combined with $Q^+_j(r;0,0;s)=0$.
\end{Rem}

Let us now relate $Y(\ssl(V))$ to $Y(\gl(V))$ of Section~\ref{ssec Drinfeld super Yangian gl}.
Define $u_1,\ldots,u_{n-1}$ via
\begin{equation}\label{u-shifts}
  u_1:=u\ \mathrm{and}\
  u_{i+1}=u_i+\frac{c_{i,i+1}}{2}=u_i-\frac{(-1)^{\ol{i+1}}}{2}.
\end{equation}
Consider the generating series $e_i(u),f_i(u),d_j(u)$ with
coefficients in $Y(\gl(V))$, defined via
\begin{equation*}
  e_i(u):=\sum_{r\geq 1} e^{(r)}_iu^{-r},\
  f_i(u):=\sum_{r\geq 1} f^{(r)}_iu^{-r},\
  d_j(u):=1+\sum_{r\geq 1} d^{(r)}_ju^{-r}.
\end{equation*}
We also introduce the elements $\{X^\pm_{i,r}, H_{i,r}\}_{1\leq i<n}^{r\geq 0}$
of $Y(\gl(V))$ via
\begin{equation*}
  \sum_{r\geq 0} X^{+}_{i,r}u^{-r-1}=f_i(u_i),\
  \sum_{r\geq 0} X^{-}_{i,r}u^{-r-1}=(-1)^{\ol{i}}e_i(u_i),\
  1+\sum_{r\geq 0} H_{i,r}u^{-r-1}=d_i(u_i)^{-1}d_{i+1}(u_i).
\end{equation*}

\begin{Thm}\label{sl vs gl Drinfeld}
The assignment
  $\sx^\pm_{i,r}\mapsto X^\pm_{i,r}, h_{i,r}\mapsto H_{i,r}$
gives rise to a superalgebra embedding
\begin{equation}\label{sl embedded into gl}
  \jmath\colon Y(\ssl(V))\hookrightarrow Y(\gl(V)).
\end{equation}
Moreover, the superalgebra isomorphism
  $\Upsilon\colon Y(\gl(V))\iso Y^\rtt(\gl(V))$
of Theorem~\ref{Yangian Gauss decomposition} gives rise to a superalgebra
isomorphism $\Upsilon\colon Y(\ssl(V))\iso Y^\rtt(\ssl(V))$.
\end{Thm}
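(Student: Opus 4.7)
The plan is to establish the theorem in two stages: first construct the homomorphism $\jmath$ by verifying the defining relations of $Y(\ssl(V))$ on the proposed images inside $Y(\gl(V))$, then identify its image under $\Upsilon$ with $Y^\rtt(\ssl(V))$ using the explicit generation result of Lemma~\ref{explicit RTT sl}, and finally argue injectivity.

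First I would show that the assignment $\sx^\pm_{i,r}\mapsto X^\pm_{i,r},\ h_{i,r}\mapsto H_{i,r}$ respects all defining relations \eqref{Dr Yangian sl 1}--\eqref{Dr Yangian sl 6}. The key mechanical step is to repackage the relations \eqref{Dr Yangian 1}--\eqref{Dr Yangian 10} of $Y(\gl(V))$ in terms of the generating series $e_i(u), f_i(u), d_j(u)$ and their shifts by $u_i$ defined in \eqref{u-shifts}. The shift $u_{i+1}=u_i-(-1)^{\ol{i+1}}/2$ is engineered precisely so that the two Cartan series $d_i(u_i)^{-1}d_{i+1}(u_i)$ exchange across the simple root $\alpha_i$ in a symmetric manner, turning the asymmetric $(-1)^{\ol{i}}(\delta_{i,j}-\delta_{i,j+1})$ shift appearing in \eqref{Dr Yangian 2} into the symmetric $\pm c_{ij}/2$ anticommutator that governs \eqref{Dr Yangian sl 2.2}, \eqref{Dr Yangian sl 4.1}. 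This verification in the non-super case is standard (\cite{bk,d,mno}) and for the distinguished diagram is carried out in~\cite[\S8]{g2}; for a general diagram, the isotypic parity signs $(-1)^{\ol{i}}, (-1)^{\ol{i+1}}$ track coherently because the formulas for $X^\pm_{i,r}$ already absorb them. The relation \eqref{Dr Yangian sl 2.3} is a direct consequence of \eqref{Dr Yangian 2} when $\ol{i}=\ol{i+1}$ is violated (i.e.\ $|\alpha_i|=\bar{1}$), which forces the symmetric part of the shift to vanish identically.

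Next I would handle the cubic and quartic Serre relations \eqref{Dr Yangian sl 5}, \eqref{Dr Yangian sl 6}. These are inherited at once from \eqref{Dr Yangian 9}--\eqref{Dr Yangian 12}: the cubic ones translate term-by-term (the shift by $u_i$ is degree-preserving in the coefficients), while \eqref{Dr Yangian sl 6} is the specialization of \eqref{Dr Yangian 11} with $k=l=1$ after shifting, noting that the factor $e_j^{(1)}$ is unchanged by the shift in its lowest mode.

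Having established that $\jmath$ is a well-defined homomorphism, I would then prove the second assertion. Consider the composition $\Upsilon\circ\jmath\colon Y(\ssl(V))\to Y^\rtt(\gl(V))$. Under $\Upsilon$, the generators $H_{i,r}, X^\pm_{i,r}$ are sent to the coefficients (in the shifted variable $u_i$) of $D_i(z)^{-1}D_{i+1}(z), F_{i+1,i}(z), E_{i,i+1}(z)$ respectively, which by Lemma~\ref{explicit RTT sl} generate exactly $Y^\rtt(\ssl(V))\subseteq Y^\rtt(\gl(V))$. Hence $\Upsilon(\jmath(Y(\ssl(V))))\subseteq Y^\rtt(\ssl(V))$, and the reverse inclusion is immediate from the same lemma. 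Thus $\Upsilon\circ\jmath$ surjects onto $Y^\rtt(\ssl(V))$. For injectivity of $\jmath$ (equivalently of $\Upsilon\circ\jmath$), I would pass to the associated graded with respect to the filtration induced by $\deg_2$ of \eqref{gradings}. Using Lemma~\ref{associated graded}(b), the associated graded of $Y^\rtt(\ssl(V))$ is identified with $U(\ssl(V)[t])$, which by the classical PBW theorem agrees with the associated graded of $Y(\ssl(V))$ (as predicted by its own PBW theorem recalled in \S\ref{ssec triangular decomposition}). Since $\Upsilon\circ\jmath$ sends generators of $Y(\ssl(V))$ to elements whose symbols exhaust the Chevalley generators of $\ssl(V)[t]$, the induced map on associated graded pieces is an isomorphism, forcing $\Upsilon\circ\jmath\colon Y(\ssl(V))\iso Y^\rtt(\ssl(V))$ and, a posteriori, the injectivity of $\jmath$.

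The main obstacle is the bookkeeping in the first step: writing \eqref{Dr Yangian 2}--\eqref{Dr Yangian 4} in the generating-series form, specializing arguments to $u_i$, and showing the result matches \eqref{Dr Yangian sl 2.2}, \eqref{Dr Yangian sl 4.1} up to signs $(-1)^{\ol{i}}, (-1)^{\ol{i+1}}$, particularly in the mixed case $|\alpha_i|=\bar{1}$ where the shift $u_{i+1}=u_i$ collapses and one must instead use the vanishing relation \eqref{Dr Yangian sl 2.3}. Once these relations are matched, everything else reduces to applying the previously established results, in particular Theorem~\ref{Yangian Gauss decomposition} and Lemma~\ref{explicit RTT sl}.
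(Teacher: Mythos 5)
Your proposal follows the same overall plan as the paper's proof: verify the defining relations on generating series (the paper cites \cite[Remark 5.12]{bk} for this), identify $\Upsilon(\jmath(Y(\ssl(V))))$ with $Y^\rtt(\ssl(V))$ via Lemma~\ref{explicit RTT sl}, and deduce injectivity using the PBW theorem of Theorem~\ref{PBW for Yangian}, exactly as in \cite[Proposition 5]{g2} which the paper cites for this step. So the approach is essentially the same, and you are right that the PBW theorem is the crucial input for injectivity.

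One imprecision worth flagging in your injectivity argument: you invoke Lemma~\ref{associated graded}(b) to identify $\gr_2 Y^\rtt(\ssl(V))$ with $U(\ssl(V)[t])$, but that lemma only computes $\gr_2 Y^\rtt(\gl(V))\simeq U(\gl(V)[t])$. For a filtered subalgebra $B\subset A$, the associated graded $\gr B\hookrightarrow \gr A$ is in general \emph{larger} than the subalgebra of $\gr A$ generated by the symbols of the generators of $B$, so the identification you claim is not automatic. The fix is easy and does not change your conclusion: work directly in $\gr_2 Y^\rtt(\gl(V))\simeq U(\gl(V)[t])$ and observe that the symbols of the images under $\Upsilon\circ\jmath$ of the ordered PBW monomials of $Y(\ssl(V))$ from Theorem~\ref{PBW for Yangian} are exactly the PBW monomials of $U(\ssl(V)[t])\subset U(\gl(V)[t])$, which are linearly independent; injectivity of $\Upsilon\circ\jmath$ (and hence of $\jmath$) follows without ever needing to compute $\gr_2 Y^\rtt(\ssl(V))$ itself. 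With that repair your argument is sound and matches the structure of the paper's proof.
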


\begin{proof}
The compatibility of the assignment
  $\sx^\pm_{i,r}\mapsto X^\pm_{i,r}, h_{i,r}\mapsto H_{i,r}$
with the defining relations~(\ref{Dr Yangian sl 1}--\ref{Dr Yangian sl 6}) is
straightforward (cf.~\cite[Remark 5.12]{bk}). Hence, we obtain a superalgebra
homomorphism $\jmath\colon Y(\ssl(V))\to Y(\gl(V))$. Its image coincides with
the pre-image of $Y^\rtt(\ssl(V))$ under $\Upsilon$, due to Lemma~\ref{explicit RTT sl}.
Finally, the injectivity of $\jmath$ is established in the same way as it was
proved in~\cite[Proposition 5]{g2} for the particular case of $Y(\gl_{n_+|n_-})$
(the proof crucially uses the construction of PBW bases for the Yangians
of~\cite{l,s}, recalled in Theorem~\ref{PBW for Yangian}).
\end{proof}

As an immediate corollary of Theorem~\ref{sl vs gl Drinfeld} and
Corollary~\ref{isomorphism of RTT yangians sl}, we obtain:

\begin{Thm}\label{isomorphism of Drinfeld yangians sl}
The superalgebra $Y(\ssl(V))$ depends only on $(n_+,n_-)$, up to an isomorphism.
\end{Thm}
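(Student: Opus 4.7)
The plan is to derive this theorem as a direct composition of two results already established in the excerpt, namely Theorem~\ref{sl vs gl Drinfeld} and Corollary~\ref{isomorphism of RTT yangians sl}. Concretely, given two superspaces $V$ and $V'$ with the same invariants $n_+ = n'_+$ and $n_- = n'_-$, I would construct an isomorphism $Y(\ssl(V)) \iso Y(\ssl(V'))$ by traversing the following chain:
\begin{equation*}
  Y(\ssl(V)) \xrightarrow{\;\Upsilon\;} Y^\rtt(\ssl(V)) \xrightarrow{\;\sim\;}
  Y^\rtt(\ssl(V')) \xrightarrow{\;\Upsilon^{-1}\;} Y(\ssl(V')).
\end{equation*}

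The first and third arrows are the superalgebra isomorphisms furnished by Theorem~\ref{sl vs gl Drinfeld}, which identify the Drinfeld super Yangian $Y(\ssl(V))$ with its RTT counterpart $Y^\rtt(\ssl(V))$ (and likewise for $V'$) via the Gauss decomposition of the $T$-matrix. The middle arrow is supplied by Corollary~\ref{isomorphism of RTT yangians sl}. In turn, that corollary is itself obtained from Lemma~\ref{isomorphism of RTT yangians} by choosing a permutation $\sigma \in \Sigma_n$ matching the parities of $\sfv_i$ and $\sfv'_{\sigma(i)}$, sending $t^{(r)}_{ij} \mapsto t^{(r)}_{\sigma(i),\sigma(j)}$, and noting that this relabeling preserves the defining RTT relations~\eqref{RTT termwise}; the restriction to the $\ssl$-subalgebras \eqref{RTT sl-super-Yangian} is well-defined since the permutation isomorphism on $Y^\rtt(\gl)$ visibly commutes with the scalar multipliers $\mu_f$ of~\eqref{yangian automorphims}, which act diagonally on $T(z)$.

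There is no real obstacle to the proof, since all the substantial work has already been performed: the identification $Y(\ssl(V)) \simeq Y^\rtt(\ssl(V))$ relies on the Gauss decomposition together with the PBW theorem for $Y(\ssl(V))$ (used in Theorem~\ref{sl vs gl Drinfeld} to establish injectivity of $\jmath$), while the independence of $Y^\rtt(\ssl(V))$ on the choice of Dynkin diagram follows from the elementary parity-preserving permutation argument. The theorem is thus, as claimed by the authors, an immediate corollary.
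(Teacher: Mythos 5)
Your proof is correct and follows exactly the route the paper intends: the theorem is stated in the paper as an immediate consequence of Theorem~\ref{sl vs gl Drinfeld} (giving $Y(\ssl(V)) \iso Y^\rtt(\ssl(V))$) and Corollary~\ref{isomorphism of RTT yangians sl} (giving $Y^\rtt(\ssl(V)) \simeq Y^\rtt(\ssl(V'))$), and you compose these precisely as one should. No further comment is needed.
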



\subsection{The PBW theorem and the triangular decomposition for $Y(\ssl(V))$}\label{ssec triangular decomposition}
\

Let $Y^{\pm}(\ssl(V))$ and $Y^{0}(\ssl(V))$ be the subalgebras of $Y(\ssl(V))$
generated by $\{\sx^\pm_{i,r}\}$ and $\{h_{i,r}\}$, respectively. Likewise, let
$\wt{Y}^{\pm}(\ssl(V))$ and $\wt{Y}^{0}(\ssl(V))$ be the associative
$\BC$-superalgebras generated by $\{\sx^\pm_{i,r}\}_{1\leq i<n}^{r\geq 0}$
and $\{h_{i,r}\}_{1\leq i<n}^{r\geq 0}$, respectively, with the $\BZ_2$-grading
$|h_{i,r}|=\bar{0}, |\sx^\pm_{i,r}|=|\alpha_i|$, and subject to the defining
relations~(\ref{Dr Yangian sl 4.1}--\ref{Dr Yangian sl 6})
and~(\ref{Dr Yangian sl 1}), respectively. The assignments
$\sx^\pm_{i,r}\mapsto \sx^\pm_{i,r}$ and $h_{i,r}\mapsto h_{i,r}$
clearly give rise to epimorphisms
  $\wt{Y}^{\pm}(\ssl(V))\twoheadrightarrow Y^{\pm}(\ssl(V))$ and
  $\wt{Y}^{0}(\ssl(V))\twoheadrightarrow Y^{0}(\ssl(V))$
(which are actually isomorphisms, due to
Proposition~\ref{Triangular decomposition}(a)).

Pick any total ordering $\preceq$ on $\Delta^+\times \BN$.
For every $(\beta,r)\in \Delta^+\times \BN$, we choose:

(1) a decomposition $\beta=\alpha_{i_1}+\ldots+\alpha_{i_p}$ such that
$[\cdots[e_{\alpha_{i_1}},e_{\alpha_{i_2}}],\cdots,e_{\alpha_{i_p}}]$
is a nonzero root vector $e_\beta$ of $\ssl(V)$
(here, $e_{\alpha_i}$ denotes the standard Chevalley generator of $\ssl(V)$);

(2) a decomposition $r=r_1+\ldots+r_p$ with $r_i\in \BN$.

\noindent
Define the \emph{PBW basis elements} $\sx^\pm_{\beta,r}$ of
$Y^{\pm}(\ssl(V))$ or $\wt{Y}^{\pm}(\ssl(V))$ via
\begin{equation}\label{higher roots}
  \sx^\pm_{\beta,r}:=
  [\cdots[[\sx^\pm_{i_1,r_1},\sx^\pm_{i_2,r_2}],\sx^\pm_{i_3,r_3}],\cdots,\sx^\pm_{i_p,r_p}].
\end{equation}

Let $H$ denote the set of all functions $h\colon \Delta^+\times \BN\to \BN$
with finite support and such that $h(\beta,r)\leq 1$ if $|\beta|=\bar{1}$
(we set $|\pm(\alpha_j+\ldots+\alpha_i)|:=|\alpha_j|+\ldots+|\alpha_i|\in \BZ_2$).
The monomials
\begin{equation}\label{PBWD monomials}
  \sx^\pm_h:=
  \prod\limits_{(\beta,r)\in \Delta^+\times \BN}^{\rightarrow} {\sx^\pm_{\beta,r}}^{h(\beta,r)}
  \ \ \mathrm{with}\ h\in H
\end{equation}
will be called the \emph{ordered PBW monomials} of $Y^{\pm}(\ssl(V))$
or $\wt{Y}^{\pm}(\ssl(V))$.

The following PBW result for Yangians is originally due to~\cite{l}\footnote{The
original proof of~\cite{l} contains a substantial gap, see~\cite[Appendix B]{ft2}
for an alternative proof.} (cf.~\cite[Theorem 5.11]{bk} and \cite[proof of Proposition 5]{g2}):

\begin{Thm}\label{PBW for Yangian}
(a) The ordered PBW monomials $\{\sx^\pm_h\}_{h\in H}$ form
a $\BC$-basis of $\wt{Y}^{\pm}(\ssl(V))$.

\noindent
(b) The ordered (in any way) monomials in $\{h_{i,r}\}_{1\leq i<n}^{r\geq 0}$
form a $\BC$-basis of $\wt{Y}^{0}(\ssl(V))$.

\noindent
(c) The products of ordered PBW monomials
$\{\sx^-_h\}_{h\in H}$, $\{\sx^+_{h'}\}_{h'\in H}$, and the ordered monomials
in $\{h_{i,r}\}_{1\leq i<n}^{r\geq 0}$ form a $\BC$-basis of $Y(\ssl(V))$.
\end{Thm}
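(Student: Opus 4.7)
The plan is to reduce the PBW property for $\wt{Y}^\pm(\ssl(V))$ to the classical super PBW theorem for the universal enveloping of a current Lie superalgebra, via a filtration argument. First, equip $\wt{Y}^\pm(\ssl(V))$ with the ascending filtration defined by $\deg(\sx^\pm_{i,r}):=r$; inspection of \eqref{Dr Yangian sl 4.1}--\eqref{Dr Yangian sl 4.2} shows that modulo strictly lower filtration degree they reduce to the defining commutator/anti-commutator relations of the current Lie superalgebra $\fn^\pm[t]=\fn^\pm\otimes \BC[t]$, where $\fn^\pm\subset \ssl(V)$ is spanned by the positive (respectively, negative) root vectors, while the cubic and quartic Serre relations \eqref{Dr Yangian sl 5}--\eqref{Dr Yangian sl 6} carry over literally. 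This yields a graded superalgebra surjection
\begin{equation*}
  \pi^\pm\colon U(\fn^\pm[t])\twoheadrightarrow \gr \wt{Y}^\pm(\ssl(V))
\end{equation*}
sending the generator of $\fn^\pm[t]$ indexed by $(\alpha_i,r)$ to the principal symbol of $\sx^\pm_{i,r}$. By the classical super PBW for $U(\fn^\pm[t])$, the images of the monomials \eqref{PBWD monomials} span $\gr \wt{Y}^\pm(\ssl(V))$, whence $\{\sx^\pm_h\}_{h\in H}$ already span $\wt{Y}^\pm(\ssl(V))$.

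The linear independence of $\{\sx^\pm_h\}_{h\in H}$ is the heart of the argument. I would construct a superalgebra homomorphism $\wt{Y}^\pm(\ssl(V))\to Y^\rtt(\gl(V))$ directly by sending $\sx^+_{i,r}\mapsto E^{(r)}_i$ and $\sx^-_{i,r}\mapsto F^{(r)}_i$ (with appropriate signs dictated by the parities), and verify that the defining relations \eqref{Dr Yangian sl 4.1}--\eqref{Dr Yangian sl 6} are satisfied by direct RTT computation using \eqref{RTT termwise}, following \cite[\S5]{bk} in the non-super case and \cite[Theorem 3]{g2} in the distinguished case; this is a subset of the verification needed for $\Upsilon$ in Theorem~\ref{Yangian Gauss decomposition} and crucially does not invoke the $\ssl$-to-$\gl$ embedding of Theorem~\ref{sl vs gl Drinfeld}. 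Composing with the filtration-preserving isomorphism $\gr_2 Y^\rtt(\gl(V))\iso U(\gl(V)[t])$ of Lemma~\ref{associated graded}(b), each monomial $\sx^\pm_h$ is carried to the corresponding classical PBW monomial in $U(\fn^\pm[t])\subset U(\gl(V)[t])$; linear independence of the latter in the current-algebra enveloping implies linear independence of $\{\sx^\pm_h\}_{h\in H}$ in $\wt{Y}^\pm(\ssl(V))$. Combined with the spanning step, this proves part (a) and simultaneously upgrades $\pi^\pm$ to an isomorphism.

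Part (b) is immediate: relation \eqref{Dr Yangian sl 1} asserts that the generators of $\wt{Y}^0(\ssl(V))$ are all even and pairwise commute, so by construction $\wt{Y}^0(\ssl(V))$ is the polynomial algebra $\BC[\{h_{i,r}\}_{1\leq i<n}^{r\geq 0}]$, for which any ordered monomial basis is the desired $\BC$-basis. For part (c), combine (a) and (b) with a triangular decomposition: relations \eqref{Dr Yangian sl 2.1}--\eqref{Dr Yangian sl 3} allow one to rewrite any monomial in the generators of $Y(\ssl(V))$ as a linear combination of ordered products $\sx^-_h\cdot (\prod h_{i,r}^{m_{i,r}})\cdot \sx^+_{h'}$, giving spanning. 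Linear independence of such products is again deduced by embedding into $Y^\rtt(\gl(V))$ and invoking the triangular decomposition supplied by the Gauss factorization $T(z)=F(z)D(z)E(z)$, combined with $\gr_2 Y^\rtt(\gl(V))\iso U(\gl(V)[t])=U(\fn^-[t])\otimes U(\fh[t])\otimes U(\fn^+[t])$.

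The main obstacle is the verification of the quartic Serre relations \eqref{Dr Yangian sl 6} for the Gauss elements $E^{(r)}_i,F^{(r)}_i$ in $Y^\rtt(\gl(V))$; this is precisely the step that was gapped in \cite{l} and revisited in \cite[Appendix B]{ft2}. Its resolution hinges on the generalized quartic identities \eqref{Dr Yangian sl 6 generalized} together with an inductive derivation from \eqref{RTT termwise}, along the lines of \cite[Proposition 5]{g2} and \cite[Proposition 2.2]{p2}.
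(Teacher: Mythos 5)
Your overall strategy—spanning via the ``$\deg(\sx^\pm_{i,r})=r$'' filtration combined with linear independence via a homomorphism into $Y^\rtt(\gl(V))$ and Lemma~\ref{associated graded}(b)—is exactly the route the paper cites (\cite[Theorem~5.11]{bk} and \cite[Proposition~5]{g2}), and your observation that one only needs the existence (not injectivity) of the map to $Y^\rtt(\gl(V))$ correctly dodges any circularity with Theorem~\ref{sl vs gl Drinfeld}. However, there are two concrete inaccuracies. First, the bare assignment $\sx^+_{i,r}\mapsto E^{(r)}_i$, ``$\sx^-_{i,r}\mapsto F^{(r)}_i$'' does not give a homomorphism of $\wt{Y}^\pm(\ssl(V))$: the relation~\eqref{Dr Yangian sl 4.1} has a \emph{symmetrized} right-hand side $\pm\tfrac{c_{ij}}{2}\{\sx^\pm_{i,r},\sx^\pm_{j,s}\}$, whereas the $e$-relations~\eqref{Dr Yangian 5} satisfied by $E^{(r)}_i$ have a single-product right-hand side. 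One genuinely needs the shifted generating series built from $u_i$ in~\eqref{u-shifts} (and also a shift $r\mapsto r+1$ in the mode index, and a swap of $E$ and $F$ given the paper's conventions for $X^\pm_{i,r}$), not merely sign adjustments; this is precisely what $\Upsilon\circ\jmath$ does. Second, your characterization of the gap in \cite{l} is wrong: \cite{l} treats $Y(\fg)$ for simple (non-super) $\fg$, where there are no quartic Serre relations, and the gap there concerns deducing the full set of Drinfeld relations from Levendorskii's finitely many; it has nothing to do with verifying quartic Serre for the Gauss elements $E^{(r)}_i,F^{(r)}_i$, which is a routine (if tedious) RTT computation recorded in \cite[Theorem~3]{g2} and \cite[Lemma~3.3]{p2} and commented on in Remark~\ref{relevance of degree 4 Serre}. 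With the shifted map in place the argument goes through; note also that for the spanning step one should be explicit that the degree-general quartic Serre relations in $\gr\wt{Y}^\pm(\ssl(V))$ follow from the $(k,l)=(0,0)$ case~\eqref{Dr Yangian sl 6} combined with the loop-shift relation $[\bar{\sx}^\pm_{i,r+1},\bar{\sx}^\pm_{j,s}]=[\bar{\sx}^\pm_{i,r},\bar{\sx}^\pm_{j,s+1}]$ (each term in~\eqref{Dr Yangian sl 6 generalized} individually reduces modulo lower degree to an instance of~\eqref{Dr Yangian sl 6}), and that one is invoking the Chevalley--Serre presentation of $\fn^\pm$ for Lie superalgebras from~\cite{lss}.
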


As an important corollary, we obtain the \emph{triangular decomposition} for $Y(\ssl(V))$:

\begin{Prop}\label{Triangular decomposition}
(a) The assignments $\sx^\pm_{i,r}\mapsto \sx^\pm_{i,r}$ and
$h_{i,r}\mapsto h_{i,r}$ give rise to isomorphisms
\begin{equation*}
  \wt{Y}^{\pm}(\ssl(V))\iso Y^{\pm}(\ssl(V))
  \ \mathrm{and}\
  \wt{Y}^{0}(\ssl(V))\iso Y^{0}(\ssl(V)).
\end{equation*}

\noindent
(b) The multiplication map
\begin{equation*}
  m\colon Y^{-}(\ssl(V))\otimes Y^{0}(\ssl(V))\otimes Y^{+}(\ssl(V))
  \longrightarrow Y(\ssl(V))
\end{equation*}
is an isomorphism of $\BC$-vector superspaces.
\end{Prop}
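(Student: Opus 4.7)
The plan is to deduce Proposition~\ref{Triangular decomposition} directly from the PBW theorem, Theorem~\ref{PBW for Yangian}, without any further computation with the defining relations. Part (a) is a standard surjectivity-plus-basis-count argument, while part (b) is then formal from counting bases.

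First I would treat part (a). The assignments $\sx^\pm_{i,r}\mapsto \sx^\pm_{i,r}$ and $h_{i,r}\mapsto h_{i,r}$ clearly respect the defining relations of $\wt{Y}^\pm(\ssl(V))$ and $\wt{Y}^0(\ssl(V))$, since those relations already hold inside $Y(\ssl(V))$ by construction; this gives superalgebra homomorphisms
\[
  \pi^\pm\colon \wt{Y}^\pm(\ssl(V))\twoheadrightarrow Y^\pm(\ssl(V)),\qquad
  \pi^0\colon \wt{Y}^0(\ssl(V))\twoheadrightarrow Y^0(\ssl(V)),
\]
and these are surjective by the very definition of $Y^\pm(\ssl(V))$ and $Y^0(\ssl(V))$ as subalgebras generated by the images of the generators. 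For injectivity of $\pi^\pm$, I would invoke Theorem~\ref{PBW for Yangian}(a): the ordered PBW monomials $\{\sx^\pm_h\}_{h\in H}$ form a $\BC$-basis of $\wt{Y}^\pm(\ssl(V))$. The images $\{\pi^\pm(\sx^\pm_h)\}_{h\in H}$ in $Y^\pm(\ssl(V))\subset Y(\ssl(V))$ appear among the basis elements of $Y(\ssl(V))$ listed in Theorem~\ref{PBW for Yangian}(c), and are therefore linearly independent. Hence $\pi^\pm$ sends a basis to a linearly independent set and is injective. The same reasoning, applied to Theorem~\ref{PBW for Yangian}(b, c), establishes injectivity of $\pi^0$.

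Next I would deduce part (b). Fix total orderings on the generating sets, and consider the $\BC$-linear map
\[
  m\colon Y^{-}(\ssl(V))\otimes Y^{0}(\ssl(V))\otimes Y^{+}(\ssl(V))\longrightarrow Y(\ssl(V))
\]
induced by multiplication. By part (a), the three tensor factors have $\BC$-bases given by the ordered PBW monomials $\{\sx^-_h\}_{h\in H}$, the ordered monomials in $\{h_{i,r}\}$, and the ordered PBW monomials $\{\sx^+_{h'}\}_{h'\in H}$, respectively. Their tensor products therefore form a $\BC$-basis of the left-hand side. Under $m$, these tensor products map precisely to the distinguished products of ordered PBW monomials which, by Theorem~\ref{PBW for Yangian}(c), form a $\BC$-basis of $Y(\ssl(V))$. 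Hence $m$ carries a basis to a basis, so $m$ is a $\BC$-linear isomorphism, which is exactly the assertion of part (b).

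I do not anticipate a genuine obstacle here; the entire content of the proposition is encapsulated in Theorem~\ref{PBW for Yangian}, and the only task is to observe that the PBW basis elements for $Y(\ssl(V))$ are, simultaneously, bases for the three subalgebras $Y^\pm,Y^0$ and compatible with the multiplication map. The mildest subtlety is that Theorem~\ref{PBW for Yangian}(c) is stated as a fact about \emph{products} of PBW monomials in a fixed order (negative, Cartan, positive), so I would make sure to apply the triangular ordering consistently; this is automatic once the chosen orderings on $\Delta^+\times\BN$ and on $\{h_{i,r}\}$ are fixed at the beginning of the argument.
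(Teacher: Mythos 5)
Your argument is correct and is exactly how the paper intends Proposition~\ref{Triangular decomposition} to be read: the paper presents it as ``an important corollary'' of Theorem~\ref{PBW for Yangian} and gives no separate proof. Your surjectivity-plus-linear-independence step for part~(a) and the basis-to-basis argument for part~(b) are the standard formal deduction from the PBW theorem, matching the paper's implicit reasoning.
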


\begin{Rem}\label{special order}
In Section~\ref{sec shuffle for superYangians}, we will use a particular
total ordering $\preceq$ on $\Delta^+\times \BN$:
\begin{equation}\label{double order}
  (\beta,r)\preceq (\beta',r')\ \mathrm{iff}\ \beta\prec \beta'
  \ \mathrm{or}\ \beta=\beta', r\leq r',
\end{equation}
where the total ordering $\preceq$ on $\Delta^+$ is as follows:
\begin{equation}\label{order}
  \alpha_j+\alpha_{j+1}+\ldots+\alpha_i\preceq \alpha_{j'}+\alpha_{j'+1}+\ldots+\alpha_{i'}
  \ \mathrm{iff}\ j<j'\ \mathrm{or}\ j=j',i\leq i'.
\end{equation}
\end{Rem}


\subsection{The super Yangians $Y_\hbar(\ssl(V))$ and $\bY_\hbar(\ssl(V))$}\label{ssec formal Yangian}
\

For the sake of the next section, let us introduce a $\BC[\hbar]$-version
of $Y(\ssl(V))$ by homogenizing the defining relations of the latter.
More precisely, let $Y_\hbar(\ssl(V))$ be the associative $\BC[\hbar]$-superalgebra
generated by $\{h_{i,r}, \sx^\pm_{i,r}\}_{1\leq i<n}^{r\geq 0}$
with the $\BZ_2$-grading $|h_{i,r}|=\bar{0}, |\sx^\pm_{i,r}|=|\alpha_i|$, and
subject to~(\ref{Dr Yangian sl 1},~\ref{Dr Yangian sl 2.1},~\ref{Dr Yangian sl 2.3},~\ref{Dr Yangian sl 3},~\ref{Dr Yangian sl 4.2},~\ref{Dr Yangian sl 5},~\ref{Dr Yangian sl 6})
and the following modifications of~(\ref{Dr Yangian sl 2.2},~\ref{Dr Yangian sl 4.1}):
\begin{equation}\label{Dr Yangian sl 2 formal}
  [h_{i,r+1},\sx^\pm_{j,s}]-[h_{i,r},\sx^\pm_{j,s+1}]=
  \pm\frac{c_{ij}\hbar}{2}\{h_{i,r},\sx^\pm_{j,s}\}\
  \mathrm{unless}\ i=j\ \mathrm{and}\ |\alpha_i|=\bar{1},
\end{equation}
\begin{equation}\label{Dr Yangian sl 4 formal}
  [\sx^\pm_{i,r+1},\sx^\pm_{j,s}]-[\sx^\pm_{i,r},\sx^\pm_{j,s+1}]=
  \pm\frac{c_{ij}\hbar}{2}\{\sx^\pm_{i,r},\sx^\pm_{j,s}\}\
  \mathrm{unless}\ i=j\ \mathrm{and}\ |\alpha_i|=\bar{1}.
\end{equation}
The algebra $Y_\hbar(\ssl(V))$ is $\BN$-graded via
  $\deg(h_{i,r})=\deg(\sx^\pm_{i,r})=r, \deg(\hbar)=1$.

Following Section~\ref{ssec triangular decomposition}, let
$Y^\pm_\hbar(\ssl(V))$ and $Y^0_\hbar(\ssl(V))$ be the
$\BC[\hbar]$-subalgebras of $Y_\hbar(\ssl(V))$ generated by
$\{\sx^\pm_{i,r}\}$ and $\{h_{i,r}\}$, respectively.
We also define the \emph{PBW basis elements}
$\{\sx^\pm_{\beta,r}\}_{\beta\in \Delta^+}^{r\in \BN}$ and
the \emph{ordered PBW monomials} $\{\sx^\pm_h\}_{h\in H}$ of $Y_\hbar(\ssl(V))$
via~(\ref{higher roots}) and~(\ref{PBWD monomials}), respectively.
We have the following counterparts of Theorem~\ref{PBW for Yangian}(c)
and Proposition~\ref{Triangular decomposition} (cf.~\cite{ft2}):

\begin{Thm}\label{PBW for formal Yangian}
(a) The products of ordered PBW monomials $\{\sx^-_h\}_{h\in H}$,
$\{\sx^+_{h'}\}_{h'\in H}$, and the ordered monomials in
$\{h_{i,r}\}_{1\leq i<n}^{r\geq 0}$ form a basis of a free
$\BC[\hbar]$-module $Y_\hbar(\ssl(V))$.

\noindent
(b) The multiplication map
  $m\colon
   Y^{-}_\hbar(\ssl(V))\otimes_{\BC[\hbar]} Y^{0}_\hbar(\ssl(V))\otimes_{\BC[\hbar]} Y^{+}_\hbar(\ssl(V))
   \to Y_\hbar(\ssl(V))$
is an isomorphism of $\BC[\hbar]$-modules.

\noindent
(c) $Y^\pm_\hbar(\ssl(V))$ are isomorphic to the associative
$\BC[\hbar]$-superalgebras generated by $\{\sx^\pm_{i,r}\}_{1\leq i<n}^{r\geq 0}$
with the $\BZ_2$-grading $|\sx^\pm_{i,r}|=|\alpha_i|$ and subject to the defining
relations~(\ref{Dr Yangian sl 4.2}--\ref{Dr Yangian sl 6},~\ref{Dr Yangian sl 4 formal}).
\end{Thm}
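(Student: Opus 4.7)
The plan is to deduce all three parts from the non-formal PBW theorem (Theorem~\ref{PBW for Yangian}) and the triangular decomposition (Proposition~\ref{Triangular decomposition}) via a Rees-algebra argument, in the spirit of \cite[Theorem B.3]{ft2}. First, I would note that the defining relations of $Y_\hbar(\ssl(V))$ are homogeneous with respect to the grading $\deg(\hbar)=1$, $\deg(h_{i,r})=\deg(\sx^\pm_{i,r})=r$, so that $Y_\hbar(\ssl(V))$ is a graded $\BC[\hbar]$-algebra, while setting $\hbar=1$ identifies $Y_\hbar(\ssl(V))/(\hbar-1)\iso Y(\ssl(V))$. Correspondingly, the above grading induces a filtration $F_\bullet$ on $Y(\ssl(V))$, and one obtains a surjective graded $\BC[\hbar]$-algebra map $\pi\colon Y_\hbar(\ssl(V))\twoheadrightarrow \Rees_F Y(\ssl(V))$.

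For part (a), I would first establish the spanning property: using the relations~(\ref{Dr Yangian sl 2 formal},~\ref{Dr Yangian sl 4 formal},~\ref{Dr Yangian sl 3}) together with the cubic and quartic Serre relations, every monomial in $Y_\hbar(\ssl(V))$ can be rewritten as a $\BC[\hbar]$-linear combination of ordered products $\sx^-_h \cdot (\text{ordered monomial in } h_{i,r})\cdot \sx^+_{h'}$ by the same inductive rewriting argument as in the classical case, at the cost of additional powers of $\hbar$ and lower-degree terms (with respect to the order $\preceq$ of Remark~\ref{special order}). Linear independence over $\BC[\hbar]$ then follows by specialization: any nontrivial $\BC[\hbar]$-relation among ordered PBW monomials, after factoring out the maximal power of $\hbar$, would specialize at $\hbar\mapsto 1$ to a nontrivial relation in $Y(\ssl(V))$, contradicting Theorem~\ref{PBW for Yangian}(c).

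Part (b) is then an immediate consequence of part (a), since the spanning set for $Y_\hbar(\ssl(V))$ is obtained as the product of the spanning sets of $Y^{-}_\hbar(\ssl(V))$, $Y^{0}_\hbar(\ssl(V))$, and $Y^{+}_\hbar(\ssl(V))$, so freeness of each factor combined with the PBW basis of $Y_\hbar(\ssl(V))$ forces $m$ to be an isomorphism. For part (c), the only relations among $\sx^\pm_{i,r}$ appearing in the presentation of $Y_\hbar(\ssl(V))$ are precisely (\ref{Dr Yangian sl 4.2}--\ref{Dr Yangian sl 6},~\ref{Dr Yangian sl 4 formal}); hence, the natural homomorphism from the abstractly-defined superalgebra with these generators and relations to $Y^\pm_\hbar(\ssl(V))$ is surjective, and its injectivity follows from part (a) applied to the $+$ or $-$ half separately.

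The main obstacle is the spanning step, which requires careful sign-tracking in the super setting, particularly for higher PBW root vectors $\sx^\pm_{\beta,r}$ with $|\beta|=\bar{1}$ (which appear with multiplicity at most one in any PBW monomial), and when commuting $h_{i,r}$ past $\sx^\pm_{j,s}$ using~(\ref{Dr Yangian sl 2 formal}). A secondary subtlety lies in ensuring that the rewriting procedure genuinely terminates in the presence of the $\hbar$-deformed commutators on the right-hand sides of (\ref{Dr Yangian sl 2 formal},~\ref{Dr Yangian sl 4 formal}), which is where the homogeneity of the relations with respect to $\deg(\hbar)=1$ is crucial. Both technicalities are handled uniformly by the argument of \cite[Appendix B]{ft2}, whose proof of the general Rees-algebra-based PBW theorem applies to $Y_\hbar(\ssl(V))$ verbatim.
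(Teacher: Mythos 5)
Your proposal is correct and takes essentially the same route as the paper, which simply records that Theorem~\ref{PBW for formal Yangian} follows from \cite[Theorem B.3]{ft2}; the Rees-algebra argument you sketch is the content of that reference. One small slip worth flagging in your linear-independence step: factoring the maximal power of $\hbar$ out of a putative relation $\sum_h c_h(\hbar)M_h=0$ only ensures the coefficients do not all vanish at $\hbar=0$, not at $\hbar=1$, so specializing at $\hbar\mapsto 1$ need not produce a nontrivial relation in $Y(\ssl(V))$. The correct fix is exactly the $\BN$-grading you already invoke elsewhere: each ordered PBW monomial $M_h$ is homogeneous of some degree $d_h$, so the relation splits into homogeneous components $\sum_h c_{h,n}\hbar^{\,n-d_h}M_h=0$ with $c_{h,n}\in\BC$, and specializing at $\hbar=1$ combined with Theorem~\ref{PBW for Yangian}(c) forces every $c_{h,n}=0$ (equivalently, one may specialize at $\hbar=a$ for all $a\in\BC^\times$).
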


The Drinfeld-Gavarini dual $\bY_\hbar(\ssl(V))$ is the
$\BC[\hbar]$-subalgebra of $Y_\hbar(\ssl(V))$ generated by
\begin{equation}\label{normalized generators}
  \mathsf{H}_{i,r}:=\hbar\cdot h_{i,r}\
  \mathrm{and}\
  \SX^\pm_{\beta,r}:=\hbar \sx^\pm_{\beta,r}
  \ \mathrm{for}\ i\in I,\beta\in \Delta^+,r\in \BN.
\end{equation}
For $h\in H$ (Section~\ref{ssec triangular decomposition}), set
  $\SX^\pm_h:=
   \prod\limits_{(\beta,r)\in \Delta^+\times \BN}^{\rightarrow} {\SX^\pm_{\beta,r}}^{h(\beta,r)}$.
The following is~\cite[Theorem A.7]{ft2}:

\begin{Thm}\label{pbw for entire gavarini yangian}
(a) The subalgebra $\bY_\hbar(\ssl(V))$ is independent
of all our choices in~(\ref{higher roots}).

\noindent
(b) The products of ordered PBW monomials $\{\SX^-_h\}_{h\in H}$,
$\{\SX^+_{h'}\}_{h'\in H}$, and the ordered monomials in
$\{\mathsf{H}_{i,r}\}_{1\leq i<n}^{r\geq 0}$ form a basis
of a free $\BC[\hbar]$-module $\bY_\hbar(\ssl(V))$.
\end{Thm}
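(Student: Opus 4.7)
My plan is to follow the strategy of~\cite[Theorem A.7]{ft2}, proving part~(b) first and then deducing part~(a) from it. The linear independence half of (b) is immediate: each ordered product $\SX^-_h \cdot \prod \mathsf{H}_{i_k,r_k}^{m_k} \cdot \SX^+_{h'}$ equals $\hbar^{N}$ times the corresponding ordered PBW monomial of $Y_\hbar(\ssl(V))$, where $N$ counts all generator-factors. Since Theorem~\ref{PBW for formal Yangian}(a,b) identifies $Y_\hbar(\ssl(V))$ with a free $\BC[\hbar]$-module on those PBW monomials, and multiplication by a fixed nonzero power of $\hbar$ preserves $\BC[\hbar]$-linear independence in a free module, the desired linear independence follows.

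The substance lies in the spanning claim, which I would reduce to the following \emph{integrality property}: every super-commutator of two generators in $\{\mathsf{H}_{i,r}, \SX^\pm_{\beta,r}\}$ is a $\BC[\hbar]$-linear combination of ordered products of these same generators. Granting this, a standard straightening argument rewrites any word in the generators as a $\BC[\hbar]$-linear combination of the claimed ordered monomials. To prove integrality, I would rescale the defining relations of $Y_\hbar(\ssl(V))$: after multiplication by $\hbar^2$, each of~\eqref{Dr Yangian sl 3},~\eqref{Dr Yangian sl 2 formal},~\eqref{Dr Yangian sl 4 formal} becomes an identity of the same shape for the normalized generators. For instance, $[\SX^+_{i,r}, \SX^-_{j,s}] = \hbar^2 \delta_{i,j}h_{i,r+s} = \hbar\,\delta_{i,j}\mathsf{H}_{i,r+s}$, and $[\SX^\pm_{i,r+1}, \SX^\pm_{j,s}] - [\SX^\pm_{i,r}, \SX^\pm_{j,s+1}] = \pm\tfrac{c_{ij}\hbar}{2}\{\SX^\pm_{i,r}, \SX^\pm_{j,s}\}$; the explicit $\hbar$-factors on the right-hand sides of the homogenized relations exactly compensate the $\hbar^2$ from rescaling both bracketed factors. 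For commutators involving higher-root generators $\SX^\pm_{\beta, r}$ with $\beta$ of height $p$, one exploits the identity $\SX^\pm_{\beta, r} = \hbar^{1-p}[\cdots[[\SX^\pm_{i_1, r_1}, \SX^\pm_{i_2, r_2}], \SX^\pm_{i_3, r_3}], \cdots, \SX^\pm_{i_p, r_p}]$ together with the Jacobi identity and an induction on height, reducing everything to the simple-root case.

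Once (b) is established, part~(a) follows easily: any alternative choice of root decomposition and partition of $r$ in~\eqref{higher roots} produces a different element ${\SX^\pm_{\beta,r}}'$, and the difference $\SX^\pm_{\beta,r} - {\SX^\pm_{\beta,r}}'$ can be expanded, via the Yangian relations, as a $\BC[\hbar]$-linear combination of ordered PBW monomials in the original generators, by the integrality just proven. A symmetric argument yields the reverse inclusion, so the two choices generate the same $\BC[\hbar]$-subalgebra of $Y_\hbar(\ssl(V))$. The principal obstacle is verifying integrality uniformly across all parities: at isotropic simple roots where $|\alpha_i|=\bar{1}$, the modified relation~\eqref{Dr Yangian sl 2.3} replaces~\eqref{Dr Yangian sl 2 formal} and the quartic Serre relations~\eqref{Dr Yangian sl 6} (together with their generalizations~\eqref{Dr Yangian sl 6 generalized}) replace the cubic ones, so each case must be checked separately. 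This is the technical heart of the argument in~\cite[Appendix A]{ft2}, which transplants to our setting because the homogenization~\eqref{Dr Yangian sl 2 formal},~\eqref{Dr Yangian sl 4 formal} is performed so that each Yangian relation scales uniformly in $\hbar$.
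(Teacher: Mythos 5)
The paper offers no self-contained proof of this statement: it simply quotes \cite[Theorem~A.7]{ft2}. Your reconstruction follows exactly the strategy of that appendix (due to Tsymbaliuk--Weekes): linear independence reduced to the free $\BC[\hbar]$-module structure of $Y_\hbar(\ssl(V))$ from Theorem~\ref{PBW for formal Yangian}; spanning reduced to an $\hbar$-integrality lemma for super-commutators of normalized generators, proved by rescaling the defining relations and inducting on the height of the root via Jacobi; part~(a) deduced from~(b). So the approach is the right one and matches what the paper cites.

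Two small points worth tightening. For linear independence you write ``multiplication by a fixed nonzero power of $\hbar$''; the power $N$ actually varies with the monomial (it equals the total number of generator factors), but the conclusion survives because $\BC[\hbar]$ is an integral domain, so $\sum_M c_M\,\hbar^{N(M)}M = 0$ with $\{M\}$ a basis forces each $c_M\hbar^{N(M)}=0$ and hence $c_M=0$. For part~(a), ``follows easily'' hides the one real issue: you must show that for any $p$-fold iterated bracket $B$ of normalized simple-root generators $\SX^\pm_{i_k,r_k}$ one has $B\in\hbar^{p-1}\bY_\hbar(\ssl(V))$ (equivalently, $\hbar^{1-p}B\in\bY_\hbar(\ssl(V))$), not merely $B\in\bY_\hbar(\ssl(V))$; only then is an alternatively chosen ${\SX^\pm_{\beta,r}}'=\hbar^{1-p}B$ an element of the originally defined subalgebra. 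This divisibility is itself a by-product of the integrality/straightening lemma you invoke for~(b), since each time two out-of-order generators are interchanged the correction term carries an extra factor of $\hbar$, but it deserves to be stated explicitly rather than folded into ``by the integrality just proven.''
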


Let $\bY^+_\hbar(\ssl(V))$ be the $\BC[\hbar]$-subalgebra of $Y^+_\hbar(\ssl(V))$
generated by $\{\SX^+_{\beta,r}\}_{\beta\in \Delta^+}^{r\in \BN}$.
A new proof of Theorem~\ref{pbw for entire gavarini yangian} but with
$\bY^+_\hbar(\ssl(V))$ in place of $\bY_\hbar(\ssl(V))$ is provided
in the next section.

\begin{Rem}\label{formal vs nonformal}
(a) In view of Theorems~\ref{PBW for formal Yangian}
and~\ref{pbw for entire gavarini yangian}, the algebras $Y_\hbar(\ssl(V))$
and $\bY_\hbar(\ssl(V))$ may be defined  as the Rees algebras:
\begin{equation}\label{Rees algebras}
  Y_\hbar(\ssl(V))=\Rees^{F_\ast} Y(\ssl(V))    
  \ \mathrm{and}\
  \bY_\hbar(\ssl(V))=\Rees^{F'_\ast} Y(\ssl(V)).
\end{equation}
Here, $F'_{\ast} Y(\ssl(V))$ and $F_{\ast} Y(\ssl(V))$ are the two
algebra filtrations on $Y(\ssl(V))$, defined by specifying the degrees
of PBW basis elements
  $\{\sx^\pm_{\beta,r},h_{i,r}\}_{\beta\in \Delta^+, 1\leq i<n}^{r\geq 0}$
as follows:
\begin{equation}\label{Gradings}
  \deg_1(\sx^\pm_{\beta,r})=\deg_1(h_{i,r})=r+1
  \ \ \mathrm{and}\ \
  \deg_2(\sx^\pm_{\beta,r})=\deg_2(h_{i,r})=r.
\end{equation}
They are pre-images of the filtrations~(\ref{gradings}) under the embedding
$\Upsilon\colon Y(\ssl(V))\hookrightarrow Y^\rtt(\gl(V))$.

\noindent
(b) For $a\in \BC^\times$,
  $Y_\hbar(\ssl(V))/(\hbar-a) Y_\hbar(\ssl(V))\simeq
   \bY_\hbar(\ssl(V))/(\hbar-a) \bY_\hbar(\ssl(V))\simeq Y(\ssl(V))$,
but
  $Y_\hbar(\ssl(V))/\hbar Y_\hbar(\ssl(V))\simeq U(\ssl(V)\otimes \BC[t])$,
while $\bY_\hbar(\ssl(V))/\hbar \bY_\hbar(\ssl(V))$ is supercommutative.
\end{Rem}

\section{Shuffle algebra realizations of $Y^+_\hbar(\ssl(V))$ and $\bY^+_\hbar(\ssl(V))$}
\label{sec shuffle for superYangians}

In this section, we obtain shuffle algebra realizations\footnote{These
are rational super counterparts of the elliptic shuffle algebras of
Feigin-Odesskii~\cite{fo1}--\cite{fo3}.} of the superalgebras $Y^+_\hbar(\ssl(V))$
and $\bY^+_\hbar(\ssl(V))$ of Section~\ref{ssec formal Yangian},
generalizing~\cite[Theorems 7.15, 7.16]{t} for the particular case of~(\ref{distinguished Dynkin}).


\subsection{The rational shuffle algebra $W^V$ and its integral form $\fW^V$}
\label{ssec super shuffle algebra}
\

We follow the notations of~\cite[\S7.2]{t}. Let $\Sigma_k$ denote the symmetric
group in $k$ elements, and set
  $\Sigma_{(k_1,\ldots,k_{n-1})}:=\Sigma_{k_1}\times \cdots\times \Sigma_{k_{n-1}}$
for $k_1,\ldots,k_{n-1}\in \BN$. Consider an $\BN^I$-graded $\BC[\hbar]$-module
  $\bar{\BW}^{V}=
   \underset{\underline{k}=(k_1,\ldots,k_{n-1})\in \BN^{I}}
   \bigoplus \bar{\BW}^{V}_{\underline{k}}$,
where $\bar{\BW}^{V}_{(k_1,\ldots,k_{n-1})}$ consists of rational functions
from $\BC[\hbar](\{x_{i,r}\}_{i\in I}^{1\leq r\leq k_i})$ which are
\emph{supersymmetric} in $\{x_{i,r}\}_{r=1}^{k_i}$ for any $i\in I$, that is,
symmetric if $|\alpha_i|=\bar{0}$ and skew-symmetric if $|\alpha_i|=\bar{1}$.

We fix an $I\times I$ matrix of rational functions
  $(\zeta_{i,j}(z))_{i,j\in I} \in \mathrm{Mat}_{I\times I}(\BC[\hbar](z))$
via
\begin{equation}\label{zeta-factor}
  \zeta_{i,j}(z)=(-1)^{\delta_{i>j}\delta_{|\alpha_i|,\bar{1}}\delta_{|\alpha_j|,\bar{1}}}
                 \left(1+\frac{c_{ij}\hbar}{2z}\right).
\end{equation}
Let us now introduce the bilinear \emph{shuffle product} $\star$ on $\bar{\BW}^{V}$:
given  $F\in \bar{\BW}^{V}_{\underline{k}}$ and $G\in \bar{\BW}^{V}_{\underline{l}}$,
define $F\star G\in \bar{\BW}^{V}_{\underline{k}+\underline{l}}$ via
\begin{equation}\label{shuffle product}
\begin{split}
  & (F\star G)(x_{1,1},\ldots,x_{1,k_1+l_1};\ldots;x_{n-1,1},\ldots, x_{n-1,k_{n-1}+l_{n-1}}):=
    \unl{k}!\cdot\unl{l}!\times\\
  & \SSym_{\Sigma_{\unl{k}+\unl{l}}}
    \left(F\left(\{x_{i,r}\}_{i\in I}^{1\leq r\leq k_i}\right)
          G\left(\{x_{i',r'}\}_{i'\in I}^{k_{i'}<r'\leq k_{i'}+l_{i'}}\right)\cdot
          \prod_{i\in I}^{i'\in I}\prod_{r\leq k_i}^{r'>k_{i'}}\zeta_{i,i'}(x_{i,r}-x_{i',r'})\right).
\end{split}
\end{equation}
Here, $\unl{k}!=\prod_{i\in I}k_i!$, and for
  $f\in \BC(\{x_{i,1},\ldots,x_{i,m_i}\}_{i\in I})$
we define its \emph{supersymmetrization} via
\begin{equation*}
  \SSym_{\Sigma_{\unl{m}}}(f)(\{x_{i,1},\ldots,x_{i,m_i}\}_{i\in I}):=
  \sum_{(\sigma_1,\ldots,\sigma_{n-1})\in \Sigma_{\unl{m}}}
  \frac{(-1)^{\sum_{i\in I} \ell(\sigma_i)|\alpha_i|}f(\{x_{i,\sigma_i(1)},\ldots,x_{i,\sigma_i(m_i)}\})}{\unl{m}!}.
\end{equation*}
This endows $\bar{\BW}^{V}$ with a structure of an associative unital algebra
with the unit $\textbf{1}\in \bar{\BW}^{V}_{(0,\ldots,0)}$.

We will be interested only in the submodule of $\bar{\BW}^{V}$ defined by
the \emph{pole} and \emph{wheel conditions}:

\noindent
$\bullet$
We say that $F\in \bar{\BW}^{V}_{\underline{k}}$ satisfies
the \emph{pole conditions} if
\begin{equation}\label{pole conditions super yangian}
  F=
  \frac{f(x_{1,1},\ldots,x_{n-1,k_{n-1}})}
       {\prod_{i=1}^{n-2}\prod_{r\leq k_i}^{r'\leq k_{i+1}}(x_{i,r}-x_{i+1,r'})},\
  f\in \BC[\hbar][\{x_{i,r}\}_{i\in I}^{1\leq r\leq k_i}],
\end{equation}
where the polynomial $f$ is supersymmetric in $\{x_{i,r}\}_{r=1}^{k_i}$ for all $i\in I$.

\noindent
$\bullet$
We say that $F\in \bar{\BW}^{V}_{\underline{k}}$ satisfies
the \emph{first kind wheel conditions} if
\begin{equation}\label{wheel condition 1 super yangian}
  F(\{x_{i,r}\})=0\ \mathrm{once}\
  x_{i,r_1}=x_{i+\epsilon,s}+\hbar/2=x_{i,r_2}+\hbar\
  \mathrm{for\ some}\ \epsilon, i, r_1, r_2, s,
\end{equation}
where
  $\epsilon\in \{\pm 1\}, i,i+\epsilon\in I,
   1\leq r_1,r_2\leq k_i, 1\leq s\leq k_{i+\epsilon}$
and $|\alpha_i|=\bar{0}$.

\noindent
$\bullet$
We say that $F\in \bar{\BW}^{V}_{\underline{k}}$ satisfies
the \emph{second kind wheel conditions} if
\begin{equation}\label{wheel condition 2 super yangian}
  F(\{x_{i,r}\})=0\ \mathrm{once}\
  x_{i-1,s}=x_{i,r_1}+\hbar/2=x_{i+1,s'}=x_{i,r_2}-\hbar/2
  \ \mathrm{for\ some}\  i,r_1,r_2,s,s',
\end{equation}
where
  $i,i-1,i+1\in I, 1\leq r_1,r_2\leq k_i, 1\leq s\leq k_{i-1}, 1\leq s'\leq k_{i+1}$
and $|\alpha_i|=\bar{1}$.

\medskip

Let $\bar{W}^{V}_{\underline{k}}\subset \bar{\BW}^{V}_{\underline{k}}$ denote the
$\BC[\hbar]$-submodule of all elements $F$ satisfying these three conditions and set
  $\bar{W}^{V}:=\underset{\underline{k}\in \BN^{I}}\bigoplus \bar{W}^{V}_{\underline{k}}$.
It is straightforward to check that $\bar{W}^{V}\subset \bar{\BW}^{V}$ is $\star$-closed.

\begin{Def}\label{Definition of Shuffle}
The algebra $\left(\bar{W}^{V},\star\right)$ shall be called the
\emph{rational shuffle (super)algebra}.
\end{Def}


This algebra is related to $Y^+_\hbar(\ssl(V))$ of Section~\ref{ssec formal Yangian}
via the following construction:

\begin{Prop}\label{simple shuffle}
The assignment $\sx^+_{i,r}\mapsto x_{i,1}^r\ (i\in I,r\in \BN)$
gives rise to a $\BC[\hbar]$-algebra homomorphism
$\Psi\colon Y^+_\hbar(\ssl(V))\to \bar{W}^{V}$.
\end{Prop}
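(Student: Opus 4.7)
The plan is to split the proof into two checks: (a) each image $\Psi(\sx^+_{i,r}) = x_{i,1}^r$ actually lands in $\bar W^V$, and (b) the assignment is compatible with the defining relations of $Y^+_\hbar(\ssl(V))$. Check (a) is immediate: a polynomial in a single color-$i$ variable is trivially supersymmetric, needs no denominator (so the pole condition~\eqref{pole conditions super yangian} is vacuous), and cannot realize either wheel configuration~\eqref{wheel condition 1 super yangian} or \eqref{wheel condition 2 super yangian}, which involve variables of several colors or several variables of color $i$. Since $\bar W^V$ is $\star$-closed, the full image will then lie in $\bar W^V$.

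For (b), by Theorem~\ref{PBW for formal Yangian}(c) the defining relations of $Y^+_\hbar(\ssl(V))$ are the quadratic relations \eqref{Dr Yangian sl 4.2}, \eqref{Dr Yangian sl 4 formal} together with the Serre relations \eqref{Dr Yangian sl 5}, \eqref{Dr Yangian sl 6}. For the quadratic ones I would compute directly. For $i\ne j$ the symmetric group $\Sigma_{e_i+e_j}$ is trivial, so
\[
  \Psi(\sx^+_{i,r}) \star \Psi(\sx^+_{j,s}) = x_{i,1}^r\, x_{j,1}^s\, \zeta_{i,j}(x_{i,1}-x_{j,1}).
\]
Subtracting the graded-swapped expression and using the explicit form~\eqref{zeta-factor}, the combination $\Psi([\sx^+_{i,r+1},\sx^+_{j,s}]) - \Psi([\sx^+_{i,r},\sx^+_{j,s+1}])$ simplifies to $\pm(c_{ij}\hbar/2)\cdot\Psi(\{\sx^+_{i,r},\sx^+_{j,s}\})$, matching \eqref{Dr Yangian sl 4 formal}; when $c_{ij}=0$ with $i\ne j$, $\zeta_{i,j}$ is a constant $\pm 1$ and the commutator vanishes, giving \eqref{Dr Yangian sl 4.2}. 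For $i=j$ with $|\alpha_i|=\bar 1$ one has $c_{ii}=0$ and hence $\zeta_{i,i}(z)=1$, so $x_{i,1}^r \star x_{i,1}^s$ is the skew-supersymmetrization of $x_{i,1}^r x_{i,2}^s$ in two odd variables, which agrees (with the correct sign) with $x_{i,1}^s \star x_{i,1}^r$ and confirms \eqref{Dr Yangian sl 4.2}.

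For the Serre relations, I would substitute the generators into the left-hand sides and collapse using \eqref{shuffle product}. In the cubic case~\eqref{Dr Yangian sl 5} with $|\alpha_i|=\bar 0$ and $j=i\pm 1$, the image lives in $\bar W^V_{2e_i+e_j}$, and after clearing the common $\zeta$-denominator the two nested-commutator terms cancel by the same algebraic identity as in the even Yangian $Y^+_\hbar(\ssl_n)$. In the quartic case~\eqref{Dr Yangian sl 6} with $|\alpha_j|=\bar 1$ and $|\alpha_{j\pm 1}|=\bar 0$, the image lies in the $(x_{j,1},x_{j,2})$-skew-symmetric component of $\bar W^V_{e_{j-1}+2e_j+e_{j+1}}$; expanding the double bracket yields a sum of rational functions whose numerator, after collecting the $\zeta$-factors, is symmetric in $(x_{j,1},x_{j,2})$, so the required skew-supersymmetrization kills it identically. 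The main obstacle will be the sign bookkeeping coming from the interplay of the supersymmetrization in \eqref{shuffle product} with the graded tensor product of \eqref{graded tensor product}, most acutely in the quartic case; once the signs are pinned down, each relation reduces to a compact polynomial identity, and \eqref{elegant deduction of higher Serre} of Remark~\ref{Serre hold always 2}(b) then provides an automatic check that all the generalized Serre relations hold on the shuffle side as well.
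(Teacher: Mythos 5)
Your proof follows the same route as the paper's: invoke Theorem~\ref{PBW for formal Yangian}(c) to pin down the defining relations of $Y^+_\hbar(\ssl(V))$, then verify that the prescribed assignment on generators is compatible with those relations inside the shuffle algebra. The paper leaves the verification to the reader, so your fleshing out of the computations is welcome.

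One case in the check of~\eqref{Dr Yangian sl 4 formal} is silently skipped: you treat $i\ne j$ and then $i=j$ with $|\alpha_i|=\bar 1$, but the relation~\eqref{Dr Yangian sl 4 formal} also applies when $i=j$ and $|\alpha_i|=\bar 0$ (then $c_{ii}=\pm 2\ne 0$), and there the symmetric group $\Sigma_{2_i}$ is no longer trivial, so the computation is genuinely different from the $i\ne j$ one. It does go through: with $a:=c_{ii}\hbar/2$ one computes $x^r\star x^s=\tfrac12\bigl[(x_1^r x_2^s+x_2^r x_1^s)+\tfrac{a(x_1^r x_2^s-x_2^r x_1^s)}{x_1-x_2}\bigr]$, whence $\Psi([\sx^+_{i,r},\sx^+_{i,s}])=\tfrac{a(x_1^r x_2^s-x_2^r x_1^s)}{x_1-x_2}$, and the difference $\Psi([\sx^+_{i,r+1},\sx^+_{i,s}])-\Psi([\sx^+_{i,r},\sx^+_{i,s+1}])=a(x_1^r x_2^s+x_2^r x_1^s)$ matches $\tfrac{c_{ii}\hbar}{2}\Psi(\{\sx^+_{i,r},\sx^+_{i,s}\})$. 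You should add this case. Also, the sign bookkeeping in $\bar W^V$ is carried entirely by the supersymmetrization and the sign in~\eqref{zeta-factor}; the graded tensor product~\eqref{graded tensor product} plays no role here (it concerns $Y^\rtt(\gl(V))\otimes\End V$), so that reference should be dropped.
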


\begin{proof}
The assignment $\sx^+_{i,r}\mapsto x_{i,1}^r\ (i\in I,r\in \BN)$ is compatible
with the defining
relations~(\ref{Dr Yangian sl 4.2}--\ref{Dr Yangian sl 6},~\ref{Dr Yangian sl 4 formal})
of $Y^+_\hbar(\ssl(V))$, due to Theorem~\ref{PBW for formal Yangian}(c).
Hence, it gives rise to a $\BC[\hbar]$-algebra homomorphism
$\Psi\colon Y^+_\hbar(\ssl(V))\to \bar{W}^{V}$.
\end{proof}

The injectivity of $\Psi$ will be proved in Corollary~\ref{Injectivity of Psi},
while its image will be identified with the submodule $W^V$ of \emph{good} elements,
see Definition~\ref{good element yangian} and Theorem~\ref{hard shuffle yangian}
(in particular, the cokernel of $\Psi$ is an $\hbar$-torsion module), resulting
in the algebra isomorphism $Y^+_\hbar(\ssl(V))\iso W^{V}$.
This constitutes the first main result of this section.

\medskip

Recall the $\BC[\hbar]$-subalgebra $\bY^+_\hbar(\ssl(V))$ of $Y^+_\hbar(\ssl(V))$,
generated by $\{\SX^+_{\beta,r}\}_{\beta\in \Delta^+}^{r\in \BN}$
of~(\ref{normalized generators}). Our second key result of this section
provides an explicit description of the image $\Psi(\bY^+_\hbar(\ssl(V)))$.

\begin{Def}\label{integral element yangian}
$F\in \bar{W}^{V}_{\unl{k}}$ is \textbf{integral} if $F$
is divisible by $\hbar^{k_1+\ldots+k_{n-1}}$.
\end{Def}

Set
  $\fW^{V}:=\underset{\unl{k}\in \BN^I}\bigoplus \fW^{V}_{\underline{k}}$,
where $\fW^{V}_{\unl{k}}\subset \bar{W}^{V}_{\unl{k}}$ denotes the
$\BC[\hbar]$-submodule of all \emph{integral} elements.
The following is the second main result of this section:

\begin{Thm}\label{shuffle integral form yangian}
The $\BC[\hbar]$-algebra homomorphism
  $\Psi\colon Y^+_\hbar(\ssl(V))\to \bar{W}^{V}$
gives rise to a $\BC[\hbar]$-algebra isomorphism
  $\Psi\colon \bY^+_\hbar(\ssl(V))\iso \fW^{V}$.
\end{Thm}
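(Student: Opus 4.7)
The plan is to establish the two inclusions $\Psi(\bY^+_\hbar(\ssl(V)))\subseteq \fW^{V}$ and $\fW^{V}\subseteq \Psi(\bY^+_\hbar(\ssl(V)))$; the injectivity of $\Psi|_{\bY^+_\hbar(\ssl(V))}$ is automatic from Corollary~\ref{Injectivity of Psi}. For the first (easy) inclusion, I would work directly on generators: if $\beta=\alpha_j+\alpha_{j+1}+\cdots+\alpha_i$ has height $p=i-j+1$, then by~(\ref{higher roots}) the element $\sx^+_{\beta,r}$ is an iterated super-commutator of $p$ simple-root elements, and a short computation using~(\ref{shuffle product}) and the explicit $\zeta$-factor~(\ref{zeta-factor}) shows that each super-commutator bracket contributes exactly one factor of $\hbar$ (from the linear $\hbar$-term of $\zeta$). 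Hence $\Psi(\sx^+_{\beta,r})\in \hbar^{p-1}\bar{W}^{V}_\beta$, so $\Psi(\SX^+_{\beta,r})=\hbar\cdot\Psi(\sx^+_{\beta,r})\in \hbar^{p}\bar{W}^{V}_\beta$, matching integrality in degree $\beta$ since $|\unl{k}|=p$ in this case. Since~(\ref{shuffle product}) is additive in multi-degree and multiplicative in $\hbar$-divisibility, $\star$-products of integral elements remain integral, yielding the first inclusion.

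For the reverse inclusion, take $F\in \fW^{V}_{\unl{k}}$. By Theorem~\ref{hard shuffle yangian} there exists $y\in Y^+_\hbar(\ssl(V))$ with $\Psi(y)=F$, and by Theorem~\ref{PBW for formal Yangian}(c) one may uniquely expand
\[
  y=\sum_{h\in H_{\unl{k}}}c_h(\hbar)\cdot\sx^+_h,\qquad c_h(\hbar)\in \BC[\hbar],
\]
where $H_{\unl{k}}\subset H$ consists of those $h$ of total weight $\sum_i k_i\alpha_i$. Writing $|h|:=\sum_{(\beta,r)}h(\beta,r)$ and noting that $\SX^+_h=\hbar^{|h|}\sx^+_h$ by~(\ref{normalized generators}) together with the centrality of $\hbar$, the membership $y\in \bY^+_\hbar(\ssl(V))$ reduces to the divisibility $\hbar^{|h|}\mid c_h(\hbar)$ for every $h\in H_{\unl{k}}$. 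I would prove this by downward induction on $h\in H_{\unl{k}}$ with respect to the ordering induced by~(\ref{double order}), using the specialization maps $\phi_{\unl{d}}$ of Section~\ref{ssec usual shuffle algebra}: to each $h$ one associates a distinguished degree vector $\unl{d}(h)$ so that Lemma~\ref{lower degrees} yields $\phi_{\unl{d}(h)}(\Psi(\sx^+_{h'}))=0$ for $h'\not\succeq h$, while Lemma~\ref{same degrees} produces an explicit evaluation $\phi_{\unl{d}(h)}(\Psi(\sx^+_h))=\hbar^{|\unl{k}|-|h|}\cdot g_h$ with $g_h\neq 0$. Since $\phi_{\unl{d}}$ is evaluation at an $\hbar$-dependent point, it preserves $\hbar$-divisibility, so integrality of $F$ forces $\hbar^{|h_{\max}|}\mid c_{h_{\max}}(\hbar)$ for the maximal $h_{\max}\in H_{\unl{k}}$ with $c_{h_{\max}}\neq 0$. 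Subtracting $(c_{h_{\max}}/\hbar^{|h_{\max}|})\cdot\Psi(\SX^+_{h_{\max}})$ from $F$ preserves integrality and strictly reduces the support, closing the induction.

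The main obstacle is the precise $\hbar$-counting in Lemma~\ref{same degrees}: one must verify that the diagonal specialization contributes neither extra nor missing factors of $\hbar$ beyond the $|\unl{k}|-|h|$ already accounted for by the bracket count from the first inclusion, and that the resulting leading coefficient genuinely survives the supersymmetrization in~(\ref{shuffle product}). Once this delicate bookkeeping is dispatched, the inductive machinery together with the PBW theorem for $Y^+_\hbar(\ssl(V))$ completes the proof.
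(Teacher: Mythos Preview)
Your proposal is correct and follows the same underlying strategy as the paper. The first inclusion is identical to the paper's Lemma~\ref{necessity for Gavarini Yangian}: your bracket-by-bracket $\hbar$-count is precisely Lemma~\ref{shuffle root elt}, and your observation that $\star$-products of integral elements remain integral is the identity~(\ref{shifted hbar as degree}). For the reverse inclusion the paper does not first invoke Theorem~\ref{hard shuffle yangian} but instead re-runs the proof of Lemma~\ref{spanning} with $G_\beta$ carrying an extra factor $\hbar^{d_\beta}$ (again using~(\ref{shifted hbar as degree}) to match the integrality of $F$). Your route---expand $F$ in the PBW basis via Theorem~\ref{hard shuffle yangian} and then extract divisibility of each $c_h$ through the specialization maps---is an equivalent repackaging of the same computation.

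Two points of precision are worth flagging. First, the induction is on degree vectors $\unl{d}\in T_{\unl{k}}$ (ordered lexicographically as in Section~\ref{ssec usual shuffle algebra}), not on individual $h$: at a fixed $\unl{d}$ there are in general many $h$ with $\deg(h)=\unl{d}$, Lemma~\ref{lower degrees} only kills those $h'$ with $\deg(h')<\unl{d}$, and you must deduce $\hbar^{|h|}\mid c_h$ for all of them at once before passing to the next $\unl{d}$. Second, for that simultaneous conclusion you need more than $g_h\neq 0$: you need that the family $\{g_h : \deg(h)=\unl{d}\}$ is a $\BC[\hbar]$-\emph{basis} of $\bigotimes_\beta \BC[\hbar][\{y_{\beta,s}\}]^{\Sigma_{d_\beta}}$, so that $\hbar^{|h|}\mid\sum_h c_h g_h$ forces $\hbar^{|h|}\mid c_h$ for each $h$. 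This is exactly what the proof of Lemma~\ref{same degrees} supplies via~(\ref{explicit formula for same degrees}): after factoring out the common $\prod_{\beta<\beta'} G_{\beta,\beta'}\cdot\prod_\beta G_\beta$ (whose $\hbar$-order is precisely $|\unl{k}|-|h|$ and no more, since $G_{\beta,\beta'}$ and $G_\beta/\hbar^{d_\beta(i(\beta)-j(\beta))}$ specialize to nonzero polynomials at $\hbar=0$), the remaining factors $\prod_\beta\bigl(\sum_{\sigma_\beta} G_\beta^{(\sigma_\beta)}\bigr)$ are identified with shuffle products $p_{\beta,r_\beta(h,1)}(x)\star\cdots\star p_{\beta,r_\beta(h,d_\beta)}(x)$ in $\bar{W}^{V'_\beta}_{d_\beta}$, and these form a $\BC[\hbar]$-basis by Lemmas~\ref{rank 1 odd} and~\ref{rank 1 even}. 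That is the resolution of the ``main obstacle'' you flag.
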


As a corollary, we will also obtain a new proof of the following result
(cf.~Theorem~\ref{pbw for entire gavarini yangian}):

\begin{Thm}\label{pbw for gavarini yangian}
(a) The subalgebra $\bY^+_\hbar(\ssl(V))$ is independent of all our choices
in~(\ref{higher roots}).

\noindent
(b) The ordered PBW monomials $\{\SX^+_h\}_{h\in H}$ form
a basis of a free $\BC[\hbar]$-module $\bY^+_\hbar(\ssl(V))$.
\end{Thm}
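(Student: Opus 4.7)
The plan is to deduce both parts from the shuffle isomorphism $\Psi\colon \bY^+_\hbar(\ssl(V))\iso \fW^{V}$ of Theorem~\ref{shuffle integral form yangian}, in conjunction with the PBW basis $\{\sx^+_h\}_{h\in H}$ of the free $\BC[\hbar]$-module $Y^+_\hbar(\ssl(V))$ from Theorem~\ref{PBW for formal Yangian} and the induced $\BC[\hbar]$-basis $\{\Psi(\sx^+_h)\}_{h\in H}$ of $W^{V}$ coming from Theorem~\ref{hard shuffle yangian} and Corollary~\ref{Injectivity of Psi}. For part (a), the integrality condition of Definition~\ref{integral element yangian} defines $\fW^{V}$ intrinsically, with no reference to the choices in~(\ref{higher roots}); combined with the injectivity of $\Psi$ on $Y^+_\hbar(\ssl(V))$, Theorem~\ref{shuffle integral form yangian} yields the intrinsic characterization
\[
  \bY^+_\hbar(\ssl(V))=\{y\in Y^+_\hbar(\ssl(V))\mid \Psi(y)\in \fW^{V}\},
\]
which immediately implies the choice-independence.

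For part (b), the $\BC[\hbar]$-linear independence of $\{\SX^+_h\}_{h\in H}$ in $\bY^+_\hbar(\ssl(V))$ is immediate: writing $\SX^+_h=\hbar^{\deg h}\sx^+_h$ with $\deg h:=\sum_{(\beta,r)}h(\beta,r)$ and using that $\{\sx^+_h\}_{h\in H}$ is a basis of the free (hence torsion-free) $\BC[\hbar]$-module $Y^+_\hbar(\ssl(V))$, the $\SX^+_h$ remain $\BC[\hbar]$-linearly independent. For the spanning, I pick $y\in \bY^+_\hbar(\ssl(V))$ and expand it uniquely as $y=\sum_h c_h\sx^+_h$ with $c_h\in \BC[\hbar]$; the task reduces to showing $\hbar^{\deg h}\mid c_h$ for every $h$, after which $y=\sum_h (c_h/\hbar^{\deg h})\SX^+_h$ lies in the $\BC[\hbar]$-span of $\{\SX^+_h\}_{h\in H}$.

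Fix $\unl{k}\in \BN^{I}$ and set $|\unl{k}|:=\sum_ik_i$. The weight-$\unl{k}$ component $\Psi(y)_{\unl{k}}$ lies in $\fW^{V}_{\unl{k}}$ and is therefore divisible by $\hbar^{|\unl{k}|}$. Independently, I first plan to establish the complementary divisibility $\Psi(\sx^+_h)\in \hbar^{|\unl{k}|-\deg h}\,\bar{W}^{V}_{\unl{k}}$ for $h$ satisfying $\sum_{(\beta,r)}h(\beta,r)\beta=\unl{k}$: since $\sx^+_{\beta,r}$ is a nested commutator of $|\beta|$ simple generators and each shuffle ``commutator'' $\zeta_{i,j}(z)-(-1)^{|\alpha_i|\cdot |\alpha_j|}\zeta_{j,i}(-z)$ is manifestly divisible by $\hbar$ (the $\hbar$-free parts cancel, using $c_{ij}=c_{ji}$), the image $\Psi(\sx^+_{\beta,r})$ is divisible by $\hbar^{|\beta|-1}$; the multiplicativity of $\Psi$ and the compatibility of the shuffle product with this valuation then yield the stated bound for $\Psi(\sx^+_h)$.

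The final step is to upgrade the divisibility of the sum $\Psi(y)_{\unl{k}}=\sum_{h\colon \sum h(\beta,r)\beta=\unl{k}} c_h\Psi(\sx^+_h)$ by $\hbar^{|\unl{k}|}$ to the termwise divisibility $\hbar^{\deg h}\mid c_h$. Setting $\tilde G_h:=\hbar^{-(|\unl{k}|-\deg h)}\Psi(\sx^+_h)\in \bar{W}^{V}_{\unl{k}}$, this reduces to verifying that the images $\ol{\tilde G_h}$ in the quotient $\bar{W}^{V}_{\unl{k}}/\hbar \bar{W}^{V}_{\unl{k}}$ are $\BC$-linearly independent. I expect to verify this by applying the specialization maps $\phi_{\unl{d}}$ of Section~\ref{ssec usual shuffle algebra}---the same tool underlying Corollary~\ref{Injectivity of Psi}---and tracking the leading $\hbar$-order of $\phi_{\unl{d}}(\Psi(\sx^+_h))$ along the lines of Lemmas~\ref{lower degrees} and~\ref{same degrees}. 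Granted this independence, the standard leading-term argument forces the termwise divisibility, completing the proof. The main obstacle is precisely this leading-order non-degeneracy of the specializations, which is the natural shuffle-theoretic counterpart of the classical PBW spanning step for the Drinfeld-Gavarini dual.
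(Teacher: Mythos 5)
Your treatment of part~(a) coincides with the paper's: the characterization $\bY^+_\hbar(\ssl(V))=\Psi^{-1}(\fW^{V})$, available once Theorem~\ref{shuffle integral form yangian} and Corollary~\ref{Injectivity of Psi} are in place, manifestly defines the subalgebra without reference to the choices in~(\ref{higher roots}). The linear independence half of part~(b) also matches the paper (and is immediate from Theorem~\ref{PBW for formal Yangian} and torsion-freeness).

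For the spanning half of part~(b), you take a genuinely different route, and as written it has a gap at exactly the step you flag. The paper disposes of the spanning in one line because it is a by-product of the proof of Theorem~\ref{shuffle integral form yangian}: Lemma~\ref{spanning}, with the factor $G_\beta$ upgraded by the extra power $\hbar^{d_\beta}$, constructs by downward lexicographic induction on $\unl d\in T_{\unl k}$ an explicit $\BC[\hbar]$-combination of the $\Psi(\SX^+_h)$ matching all specializations $\phi_{\unl d}$ of a given $F\in\fW^V_{\unl k}$, and this is already the spanning statement. You instead reduce the spanning to the $\BC$-linear independence, modulo $\hbar$, of the classes $\ol{\tilde G_h}=\ol{\Psi(\SX^+_h)/\hbar^{|\unl k|}}$ in $\bar{W}^V_{\unl k}/\hbar\bar{W}^V_{\unl k}$, followed by a leading-$\hbar$-valuation induction. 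The reduction step is sound, but the mod-$\hbar$ independence is the whole difficulty and is left as an expectation. It does not follow from Lemma~\ref{same degrees} as stated, which gives $\BC[\hbar]$-independence of $\{\phi_{\unl d}(\Psi(\sx^+_h))\}$ only among $h$ with one \emph{fixed} degree vector $\unl d$, not across different $\unl d\in T_{\unl k}$ and not modulo $\hbar$. To close the gap you would have to order $T_{\unl k}$ lexicographically, apply $\phi_{\unl d_{\max}}$ to a putative dependence, peel off the common $h$-independent factor $\prod_{\beta<\beta'} G_{\beta,\beta'}\cdot\prod_\beta G_\beta$ of~(\ref{Factor 1}),~(\ref{Factor 2}) (and check that its mod-$\hbar$ reduction is a nonzero polynomial), and finally invoke the rank-one bases of Lemmas~\ref{rank 1 odd} and~\ref{rank 1 even} modulo $\hbar$; you would also need to know that the $\hbar$-valuation of $\Psi(\sx^+_h)$ is \emph{exactly} $|\unl k|-\deg h$, so that $\ol{\tilde G_h}\neq 0$. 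Carrying this out amounts to re-deriving Lemma~\ref{spanning}, so the route is not wrong, but it is longer than the paper's and, as submitted, incomplete at the acknowledged main obstacle.
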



\subsection{The image of $Y^+_\hbar(\ssl(V))$ for $\dim(V)=2$}\label{ssec rank 1 case}
\

In the simplest case $\dim(V)=2$, all shuffle elements are
\emph{good} (Definition~\ref{good element yangian}), that is
$W^V=\bar{W}^{V}$ (see Remark~\ref{good for rk 1}(a)).
Therefore, Theorem~\ref{hard shuffle yangian} is equivalent to

\begin{Thm}\label{rank 1 iso}
If $\dim(V)=2$, then $\Psi\colon Y^+_\hbar(\ssl(V))\to \bar{W}^{V}$
is an algebra isomorphism.
\end{Thm}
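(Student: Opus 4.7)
My plan is a direct case-by-case verification. First observe that for $\dim V=2$ the index set $I=\{1\}$ is a singleton, so the denominator in the pole condition \refe{pole conditions super yangian} is an empty product, and both wheel conditions \refe{wheel condition 1 super yangian}, \refe{wheel condition 2 super yangian} are vacuous (they require $i\pm 1$ or $i-1,i,i+1$ to lie in $I$). Hence $\bar W^V_k$ is simply the $\BC[\hbar]$-module of (skew-)symmetric polynomials in $x_{1,1},\ldots,x_{1,k}$, the parity being dictated by $|\alpha_1|$.

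The case $|\alpha_1|=\bar 1$ is essentially immediate: since $c_{11}=0$, relation \refe{Dr Yangian sl 4.2} becomes the anticommutation $\sx^+_{1,r}\sx^+_{1,s}+\sx^+_{1,s}\sx^+_{1,r}=0$, so $Y^+_\hbar(\ssl(V))$ is the exterior $\BC[\hbar]$-algebra on $\{\sx^+_{1,r}\}_{r\geq 0}$. On the shuffle side $\zeta_{1,1}\equiv 1$, the shuffle product reduces to pure skew-symmetrization, and an elementary induction gives $x^{r_1}\star\cdots\star x^{r_k}=c_k\,\det(x_i^{r_j})$ for a nonzero constant $c_k$. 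For $0\leq r_1<\cdots<r_k$ these Vandermonde-type determinants form a $\BC[\hbar]$-basis of $\bar W^V_k$, so $\Psi$ maps the PBW basis of $Y^+_\hbar(\ssl(V))$ bijectively (up to nonzero scalars) to a basis of $\bar W^V$, yielding the isomorphism.

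The case $|\alpha_1|=\bar 0$ is the positive half of the ordinary $\hbar$-Yangian of $\ssl_2$, with $\zeta_{1,1}(z)=1+\hbar/z$. Here I would bi-grade both sides by the pair (polynomial degree in the $x$'s, power of $\hbar$); the shuffle product preserves this bi-grading since $\zeta_{1,1}$ has total bi-degree zero, and each bi-graded component is a finite-dimensional $\BC$-space. A straightforward combinatorial count (partitions with at most $k$ parts decorated by $\hbar$-powers, matched against the PBW monomials from \reft{PBW for formal Yangian}(c)) shows both sides have matching $\BC$-dimensions in every component. Passing to $\hbar=0$ collapses the shuffle product to ordinary symmetrization, and a direct induction yields
\begin{equation*}
x^{r_1}\star_0\cdots\star_0 x^{r_k}=C_k\cdot\Sym(x_1^{r_1}\cdots x_k^{r_k}),
\end{equation*}
a nonzero scalar multiple of the monomial symmetric polynomial $m_{(r_1,\ldots,r_k)}$. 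Ordering the bases of each bi-graded component by $\hbar$-degree, the matrix of $\Psi$ is block-upper-triangular in $\hbar$-degree with diagonal blocks being $C_k$ times the identity (the $\hbar/z$ correction in $\zeta_{1,1}$ contributes only strictly higher $\hbar$-terms). Hence $\Psi$ is invertible on every bi-graded piece and so is a global isomorphism.

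The only subtle step is the triangularity argument in the even case, but both ingredients --- bi-degree preservation of $\Psi$ and nonvanishing of the $\hbar=0$ diagonal --- follow immediately from the explicit form of $\zeta_{1,1}$ and the $\Sym$ computation above. No machinery beyond the PBW theorem and elementary (skew-)symmetric-polynomial manipulations is required.
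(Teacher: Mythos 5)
Your proof is essentially correct and follows the same two-case strategy as the paper: the odd case via pure skew-symmetrization (Vandermonde determinants), and the even case via a triangularity argument against the basis of monomial symmetric polynomials. Two imprecisions in your even case are worth correcting, though neither is fatal. First, the shuffle product does \emph{not} preserve the bi-grading by (degree in the $x$'s, power of $\hbar$): since $\zeta_{1,1}(z)=1+\hbar/z$ has terms of bi-degree $(0,0)$ and $(-1,1)$, the product preserves only the \emph{total} degree (where $\deg\hbar=\deg x_{1,r}=1$) and \emph{filters} by $\hbar$-degree; it is precisely this filtration, together with the finite-dimensionality of each (number of variables $k$, total degree $d$) component, that makes your mod-$\hbar$ triangularity argument go through. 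Second, the scalar in $x^{r_1}\star_0\cdots\star_0 x^{r_k}=\nu_{\unl r}\,m_{(r_1,\ldots,r_k)}$ is not a constant $C_k$ depending only on $k$; it equals $\prod_i k_i$ where $k_1,\ldots,k_l$ are the multiplicities of the distinct exponents (this is the $\nu_{\unl r}$ computed in the paper's Lemma~\ref{rank 1 even}). Since this is always nonzero, your diagonal blocks are invertible diagonal matrices rather than scalar multiples of the identity, and the conclusion still holds. Finally, the dimension count you mention is redundant once the block-upper-triangular structure with invertible diagonal blocks is in hand. In short: same key mechanism as the paper's Lemma~\ref{rank 1 even}, which expands $x^{r_1}\star\cdots\star x^{r_k}=\nu_{\unl r}m_{\unl r}+\sum\nu_{\unl r'}m_{\unl r'}$ directly in $\BC[\hbar]$ rather than first passing to $\hbar=0$, but the two presentations are interchangeable.
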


There are two cases to consider:
  $\bar{1}\ne \bar{2}$ (so that $|\sx^+_{1,r}|=\bar{1}$)
and
  $\bar{1}=\bar{2}$ (so that $|\sx^+_{1,r}|=\bar{0}$).
First, assume $\bar{1}\ne \bar{2}$. Due to Theorem~\ref{PBW for formal Yangian},
the following result implies Theorem~\ref{rank 1 iso}:

\begin{Lem}\label{rank 1 odd}
The ordered products
  $\{x^{r_1}\star x^{r_2}\star\cdots \star x^{r_k}\}_{k\in \BN}^{0\leq r_1<\cdots<r_k}$
form a $\BC[\hbar]$-basis of $\bar{W}^V$.
\end{Lem}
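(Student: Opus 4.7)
First I would simplify the shuffle algebra in this rank-one odd setup. Since $|\alpha_1|=\bar{1}$ means exactly one of $\sfv_1,\sfv_2$ is even and the other is odd, the Cartan entry is
\[
  c_{11}=(\alpha_1,\alpha_1)=(-1)^{\bar{1}}+(-1)^{\bar{2}}=0,
\]
so from \eqref{zeta-factor} one has $\zeta_{1,1}(z)\equiv 1$. Moreover, because $I=\{1\}$, the denominator product in the pole condition \eqref{pole conditions super yangian} is empty, and both wheel conditions \eqref{wheel condition 1 super yangian}, \eqref{wheel condition 2 super yangian} are vacuous (they require $i\pm 1\in I$). Writing the variables simply as $x_1,\dots,x_k$, I would conclude that $\bar{W}^V_k$ is nothing but the $\BC[\hbar]$-module of skew-symmetric polynomials in $x_1,\dots,x_k$.

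Next, I would recall the elementary linear-algebra fact that the antisymmetrizations
\[
  [r_1,\dots,r_k]:=\sum_{\sigma\in\Sigma_k}(-1)^{\ell(\sigma)}x_{\sigma(1)}^{r_1}\cdots x_{\sigma(k)}^{r_k},\qquad 0\le r_1<\cdots<r_k,
\]
form a $\BC[\hbar]$-basis of this space: a skew-symmetric polynomial is a linear combination of such antisymmetrizations of monomials, those with a repeated exponent vanish, and linear independence follows by comparing leading terms in the lexicographic order.

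The substantive step is then to identify, up to a nonzero scalar, the iterated shuffle products $x^{r_1}\star\cdots\star x^{r_k}$ with these antisymmetrizations. Using $\zeta_{1,1}=1$ and $|\alpha_1|=\bar{1}$, the shuffle formula \eqref{shuffle product} with $\unl{k}=(k)$, $\unl{l}=(1)$ reads
\[
  (F\star x^{s})(x_1,\dots,x_{k+1})=\frac{1}{k+1}\sum_{\sigma\in\Sigma_{k+1}}(-1)^{\ell(\sigma)}F(x_{\sigma(1)},\dots,x_{\sigma(k)})\,x_{\sigma(k+1)}^{s}.
\]
Assuming inductively $x^{r_1}\star\cdots\star x^{r_k}=c_k[r_1,\dots,r_k]$, only the $k!$ permutations $\sigma$ with $\sigma(k+1)=k+1$ contribute the monomial $x_1^{r_1}\cdots x_{k+1}^{r_{k+1}}$ (for strictly increasing $r_i$), and skew-symmetry of $[r_1,\dots,r_k]$ makes each of these contributions equal. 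Reading off coefficients yields the recursion $c_{k+1}=\tfrac{k!}{k+1}c_k$, hence
\[
  c_k=\frac{1!\cdot 2!\cdots(k-1)!}{k!}\ \neq 0,\qquad c_1=1.
\]

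Combining these three ingredients finishes the proof: the map sending the ordered tuple $(r_1<\cdots<r_k)$ to $x^{r_1}\star\cdots\star x^{r_k}=c_k[r_1,\dots,r_k]$ is, up to a nonzero scalar, the known basis of skew-symmetric polynomials, so the $\star$-products themselves form a $\BC[\hbar]$-basis of $\bar{W}^V$. There is no real obstacle here; the only place where one must be a little careful is tracking the numerical coefficient $c_k$, but a single leading-monomial computation suffices and the upshot is simply that $c_k\in\BQ^\times$, independent of $\hbar$.
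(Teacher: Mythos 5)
Your proof is correct and takes essentially the same approach as the paper: both identify $\bar{W}^V_k$ with the module $\Lambda_k$ of skew-symmetric $\BC[\hbar]$-polynomials (since $\zeta_{1,1}\equiv 1$, the pole conditions are empty, and the wheel conditions are vacuous for $\dim V=2$), and both recognize that $\star$ becomes the standard skew-symmetrization product $\Lambda_k\otimes\Lambda_l\to\Lambda_{k+l}$, so that the iterated $\star$-products are nonzero multiples of the usual antisymmetrized monomial basis. The paper states this as a one-line isomorphism $\bar W^V\simeq\bigoplus_k\Lambda_k$; you fill in the details and track the scalar $c_k=\tfrac{1!\cdots(k-1)!}{k!}$, which is harmless but unnecessary since only $c_k\neq 0$ matters. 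One small point worth tightening: your leading-coefficient computation yields the recursion only after you know $x^{r_1}\star\cdots\star x^{r_{k+1}}$ is a scalar multiple of $[r_1,\dots,r_{k+1}]$; this holds because with $\zeta\equiv 1$ the iterated shuffle product is exactly (a normalization of) the full antisymmetrization of the monomial $x_1^{r_1}\cdots x_{k+1}^{r_{k+1}}$, which a one-line index manipulation confirms.
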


\begin{proof}
This follows from the $\BC[\hbar]$-algebra isomorphism
$\bar{W}^V\simeq \bigoplus_k\Lambda_k$, where $\Lambda_k$ denotes the
$\BC[\hbar]$-module of skew-symmetric $\BC[\hbar]$-polynomials in $k$ variables,
while the algebra structure on the direct sum arises via the standard
skew-symmetrization maps $\Lambda_k\otimes \Lambda_l\to \Lambda_{k+l}$.
\end{proof}

Next, assume $\bar{1}=\bar{2}$. Due to Theorem~\ref{PBW for formal Yangian},
the following result implies Theorem~\ref{rank 1 iso}:

\begin{Lem}\label{rank 1 even}
The ordered products
  $\{x^{r_1}\star x^{r_2}\star\cdots \star x^{r_k}\}_{k\in \BN}^{0\leq r_1\leq \cdots\leq r_k}$
form a $\BC[\hbar]$-basis of $\bar{W}^V$.
\end{Lem}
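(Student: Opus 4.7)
In this setting $I=\{1\}$ and $|\alpha_1|=\bar{0}$; both the pole conditions~\refe{pole conditions super yangian} and the wheel conditions~\refe{wheel condition 1 super yangian},~\refe{wheel condition 2 super yangian} are vacuous (each requires at least two adjacent indices in $I$), so $\bar{W}^V_k$ is simply the free $\BC[\hbar]$-module $\BC[\hbar][x_1,\ldots,x_k]^{\Sigma_k}$ of symmetric polynomials. The plan is to equip $\bar{W}^V$ with the $\BN$-grading $\deg(\hbar)=\deg(x_{1,r})=1$; then $\zeta_{1,1}(z)=1+c_{11}\hbar/(2z)$ has degree $0$, the shuffle product is homogeneous, and each graded piece of $\bar{W}^V_k$ is finite-dimensional over $\BC$.

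Setting $\hbar=0$ in~\refe{shuffle product}, the shuffle product specializes to $(F\star G)\big|_{\hbar=0}=k!\,l!\,\mathrm{Sym}_{\Sigma_{k+l}}(FG)$, the classical scaled symmetrization. The key computation, carried out by a direct induction on $k$, is that for any $0\leq r_1\leq\cdots\leq r_k$,
\begin{equation*}
  (x^{r_1}\star\cdots\star x^{r_k})\big|_{\hbar=0}=C_\lambda\cdot m_\lambda(x_1,\ldots,x_k), \qquad 0<C_\lambda\in\BQ,
\end{equation*}
where $\lambda=(r_1,\ldots,r_k)$ and $m_\lambda$ is the monomial symmetric polynomial in $k$ variables. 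Positivity of $C_\lambda$ is the main technical point, but it is automatic: iterated symmetrization sums only nonnegative contributions from $\Sigma_k$-orbits, so no cancellations can occur — in sharp contrast with the skew-symmetric setup of \refl{rank 1 odd}, where the shuffle algebra is undeformed because $c_{11}=0$ forces $\zeta_{1,1}\equiv 1$.

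Since $\{m_\lambda\}$ (over partitions of length $\leq k$, allowing zero parts) is a $\BC$-basis of $\BC[x_1,\ldots,x_k]^{\Sigma_k}$, the above shows that the mod-$\hbar$ reductions of $\{x^{r_1}\star\cdots\star x^{r_k}\}_{0\leq r_1\leq\cdots\leq r_k}$ form a $\BC$-basis of $\bar{W}^V_k/\hbar\bar{W}^V_k$. A standard graded Nakayama argument (valid because $\bar{W}^V_k$ is a free $\BC[\hbar]$-module with finite-dimensional graded pieces under the total grading above) then upgrades this to a $\BC[\hbar]$-basis of $\bar{W}^V_k$ itself; summing over $k$ yields the lemma.
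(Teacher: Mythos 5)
Your argument is correct and rests on the same core computation as the paper's, but packages the conclusion differently. The paper expands $x^{r_1}\star\cdots\star x^{r_k}$ over $\BC[\hbar]$ directly, recording that the coefficient of $m_{(r_1,\ldots,r_k)}$ is the nonzero constant $\prod_i k_i$ while the remaining monomial-symmetric terms are supported on tuples $\unl{r}'$ ``nested'' inside $\unl{r}$, and concludes from this (implicit) triangular structure against the $\BC[\hbar]$-basis $\{m_\lambda\}$ of $\bar{W}^V_k\simeq\BC[\hbar][x_1,\ldots,x_k]^{\Sigma_k}$. You instead reduce mod $\hbar$ first, where every $\zeta$-factor collapses to $1$ and the shuffle product becomes plain scaled symmetrization, so that each $x^{r_1}\star\cdots\star x^{r_k}$ reduces \emph{exactly} to a positive multiple of $m_\lambda$ with no lower-order admixture at all; you then lift this mod-$\hbar$ basis to a $\BC[\hbar]$-basis by a graded Nakayama argument, which is legitimate because $\bar{W}^V_k$ is $\hbar$-torsion-free with finite-dimensional graded pieces under the total grading $\deg(\hbar)=\deg(x_{1,r})=1$ you introduce. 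This sidesteps having to spell out the partial order behind the paper's triangularity claim, which is a mild gain in cleanliness. Two minor remarks: ``positivity of $C_\lambda$ is the main technical point'' overstates things — only non-vanishing is needed, and one gets it for free from the closed form $C_\lambda=\prod_i k_i!$ (the number of permutations stabilizing the multiset $\{r_1,\ldots,r_k\}$), so no induction is really required; and the contrast you draw with Lemma~\ref{L:rank 1 odd} (where $c_{11}=0$ kills the deformation) is accurate, though tangential to the proof.
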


\begin{proof}
Recall from~\cite[Lemma 6.22]{t} that the $k$-th power of
$x^r\in \bar{W}^V_1$ ($k\geq 1, r\geq 0$) equals
$x^r\star\cdots \star x^r=k\cdot (x_1\cdots x_k)^r$.
Therefore, for any ordered collection
\begin{equation*}
  0\leq r_1=\cdots=r_{k_1}<r_{k_1+1}=\cdots=r_{k_1+k_2}<
  \cdots < r_{k_1+\ldots+k_{l-1}+1}=\cdots=r_{k=k_1+\ldots+k_l},
\end{equation*}
it is clear that $x^{r_1}\star\cdots \star x^{r_k}$ is
a symmetric polynomial of the form
  $\nu_{\unl{r}} m_{(r_{1},\ldots,r_{k})}(x_1,\ldots,x_k)+
   \sum \nu_{\unl{r}'}m_{\unl{r}'}(x_1,\ldots,x_k)$.
Here, $m_{\unl{r}}(x_1,\ldots,x_k)$ are the monomial symmetric polynomials,
the sum is over $\unl{r}'=(r'_1\leq\cdots\leq r'_k)$ satisfying
$r_1\leq r'_1\leq r'_k\leq r_k$, $\nu_{\unl{r}'}\in \BC[\hbar]$,
and $\nu_{\unl{r}}=\prod_{i=1}^l k_i$.

This completes the proof of Lemma~\ref{rank 1 even} as
  $\{m_{(s_1,\ldots,s_k)}(x_1,\ldots,x_k)\}_{0\leq s_1\leq\cdots\leq s_k}$
form a $\BC[\hbar]$-basis of
  $\BC[\hbar][\{x_r\}_{r=1}^k]^{\Sigma_k}\simeq \bar{W}^{V}_k$.
\end{proof}

Combining Lemmas~\ref{rank 1 odd} and~\ref{rank 1 even}, we obtain
the proof of Theorem~\ref{rank 1 iso}.


\subsection{The specialization maps and the injectivity of $\Psi$}\label{ssec usual shuffle algebra}
\

For an ordered PBW monomial $\sx^+_h\ (h\in H)$, define its \emph{degree}
$\deg(\sx^+_h)=\deg(h)\in \BN^{\frac{n(n-1)}{2}}$ as a collection of
  $d_{\beta}:=\sum_{r\in \BN} h(\beta,r)\ (\beta\in \Delta^+)$
ordered with respect to the total ordering~(\ref{order}) on $\Delta^+$.
We consider the \emph{lexicographical ordering} on the collections
$\unl{d}=\{d_\beta\}_{\beta\in \Delta^+}$ of $\BN^{\frac{n(n-1)}{2}}$:
\begin{equation*}
  \{d_\beta\}_{\beta\in \Delta^+}<\{d'_\beta\}_{\beta\in \Delta^+}
  \ \mathrm{iff\ there\ is}\ \gamma\in \Delta^+\
  \mathrm{such\ that}\ d_\gamma>d'_\gamma\ \mathrm{and}\
  d_\beta=d'_\beta\ \mathrm{for\ all}\ \beta<\gamma.
\end{equation*}

In what follows, we shall need an explicit formula for $\Psi(\sx^+_{\beta,r})$:

\begin{Lem}\label{shuffle root elt}
For $1\leq j<i<n$ and $r\in \BN$, we have
\begin{equation*}
  \Psi(\sx^+_{\alpha_j+\alpha_{j+1}+\ldots+\alpha_i,r})=
  \hbar^{i-j}\frac{p(x_{j,1},\ldots,x_{i,1})}{(x_{j,1}-x_{j+1,1})\cdots (x_{i-1,1}-x_{i,1})},
\end{equation*}
where $p(x_{j,1},\ldots,x_{i,1})$ is a degree $r$ monomial, up to a sign.
\end{Lem}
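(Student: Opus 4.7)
The plan is to proceed by induction on $\ell := i - j \geq 1$, choosing the linear decomposition $\alpha_{i_1}+\ldots+\alpha_{i_p} = \alpha_j + \alpha_{j+1} + \ldots + \alpha_i$ with $(i_1,\ldots,i_p) = (j,j+1,\ldots,i)$ to build $\sx^+_{\alpha_j+\ldots+\alpha_i, r}$ via the iterated commutator~(\ref{higher roots}). Since the lemma only asserts the form (with the monomial $p$ specified only up to a sign), the choice of $r = r_1+\ldots+r_p$ is immaterial.

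For the base case $\ell=1$, we have $\sx^+_{\alpha_j+\alpha_{j+1},r} = [\sx^+_{j,r_1},\sx^+_{j+1,r_2}]$ with $r=r_1+r_2$. Applying $\Psi$ and~(\ref{shuffle product}) directly, I would obtain
$$\Psi(\sx^+_{\alpha_j+\alpha_{j+1},r}) = x_{j,1}^{r_1}\, x_{j+1,1}^{r_2} \Bigl[\zeta_{j,j+1}(x_{j,1}-x_{j+1,1}) - (-1)^{|\alpha_j|\,|\alpha_{j+1}|} \zeta_{j+1,j}(x_{j+1,1}-x_{j,1})\Bigr].$$
The identity $\zeta_{j+1,j}(z) = (-1)^{|\alpha_j|\,|\alpha_{j+1}|}\,\zeta_{j,j+1}(z)$, read off from~(\ref{zeta-factor}), collapses the bracket to $c_{j,j+1}\hbar/(x_{j,1}-x_{j+1,1})$. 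Since $c_{j,j+1}=\pm 1$, this exhibits the stated form with $\hbar^{1}$, the single denominator factor $x_{j,1}-x_{j+1,1}$, and monomial numerator $\pm x_{j,1}^{r_1}x_{j+1,1}^{r_2}$ of total degree $r$.

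For the inductive step, write $\sx^+_{\alpha_j+\ldots+\alpha_i,r} = [\sx^+_{\alpha_j+\ldots+\alpha_{i-1}, r-r_p},\, \sx^+_{i,r_p}]$ and set $F_1 := \Psi(\sx^+_{\alpha_j+\ldots+\alpha_{i-1}, r-r_p})$, $F_2 := x_{i,1}^{r_p}$. Since $c_{k,i}=0$ for $k \leq i-2$, only $\zeta_{i-1,i}$ contributes a nontrivial $\hbar$-term in either of $F_1\star F_2$ or $F_2\star F_1$; the factors $\zeta_{i,k}$ for $k\leq i-2$ contribute the pure signs $(-1)^{|\alpha_i|\,|\alpha_k|}$ in $F_2\star F_1$. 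Their accumulated sign over $k=j,\ldots,i-1$ equals $(-1)^{|\alpha_i|(|\alpha_j|+\ldots+|\alpha_{i-1}|)} = (-1)^{|F_1|\,|F_2|}$, which exactly cancels the super-commutator sign. Thus
$$[F_1,F_2] \;=\; F_1\cdot x_{i,1}^{r_p}\cdot \Bigl[\zeta_{i-1,i}(x_{i-1,1}-x_{i,1}) - \zeta_{i-1,i}\bigl(-(x_{i-1,1}-x_{i,1})\bigr)\Bigr] \;=\; F_1\cdot x_{i,1}^{r_p}\cdot \frac{c_{i-1,i}\,\hbar}{x_{i-1,1}-x_{i,1}}.$$
Substituting the inductive form of $F_1$ (with one factor $\hbar^{i-1-j}$, consecutive-difference denominator, and degree $r-r_p$ monomial numerator) produces the claimed formula: one extra factor of $\hbar$, one new denominator factor $(x_{i-1,1}-x_{i,1})$, and the numerator $p'(x_{j,1},\ldots,x_{i-1,1})\cdot x_{i,1}^{r_p}$ remains monomial of total degree $r$ (up to sign, inherited from $c_{i-1,i}=\pm 1$ and the inductive hypothesis).

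The main obstacle is the sign bookkeeping in the inductive step: the super-commutator introduces $(-1)^{|F_1|\,|F_2|}$ while the shuffle product $F_2\star F_1$ accumulates a product of $(-1)^{|\alpha_i|\,|\alpha_k|}$ over all $k\in\{j,\ldots,i-1\}$. The crucial identity $(-1)^{|F_1|\,|F_2|} = \prod_{k=j}^{i-1}(-1)^{|\alpha_i|\,|\alpha_k|}$, which follows from $|F_1|=|\alpha_j|+\ldots+|\alpha_{i-1}|$ in $\BZ_2$, is what makes these signs annihilate so that only the clean pole $c_{i-1,i}\hbar/(x_{i-1,1}-x_{i,1})$ survives. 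Once this cancellation is established, the induction step reduces to a one-line substitution.
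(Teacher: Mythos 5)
Your proof is correct and carries out precisely the routine induction that the paper's one-line proof (``Straightforward computation'') alludes to. The base case and inductive step are handled properly, the identity $\zeta_{i,j}(z) = (-1)^{|\alpha_i|\cdot|\alpha_j|}\zeta_{j,i}(z)$ from~(\ref{zeta-factor}) is used correctly, and the sign bookkeeping — showing that the accumulated sign $\prod_{k=j}^{i-1}(-1)^{|\alpha_i|\cdot|\alpha_k|} = (-1)^{|F_1|\cdot|F_2|}$ cancels the super-commutator sign so that only $c_{i-1,i}\hbar/(x_{i-1,1}-x_{i,1})$ survives — is exactly the crux. The one small caveat worth noting is that you work with the specific decomposition $(i_1,\ldots,i_p)=(j,\ldots,i)$ in~(\ref{higher roots}); the lemma is stated for whatever choice was fixed there, but since the assertion is only up to sign and an unspecified monomial, and since the same inductive argument adapts to any valid ordering (at each step the new simple root extends the interval at one end and contributes one factor of $c_{k,k\pm1}\hbar/(x_{k,1}-x_{k\pm1,1})$), this is not a real gap.
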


\begin{proof}
Straightforward computation.
\end{proof}

For $\beta=\alpha_j+\alpha_{j+1}+\ldots+\alpha_i$, define $j(\beta):=j, i(\beta):=i$,
and let $[\beta]$ denote the integer interval $[j(\beta);i(\beta)]$. Consider
a collection of the intervals $\{[\beta]\}_{\beta\in \Delta^+}$ each taken with
a multiplicity $d_{\beta}\in \BN$ and ordered with respect to~(\ref{order})
(the order inside each group is irrelevant), denoted by
$\cup_{\beta\in \Delta^+} [\beta]^{d_\beta}$. Define $\unl{l}\in \BN^I$
via $\unl{l}:=\sum_{\beta\in \Delta^+} d_{\beta}[\beta]$.
Let us now define the \emph{specialization map}
\begin{equation}\label{specialization map}
  \phi_{\unl{d}}\colon \bar{W}^{V}_{\unl{l}}\longrightarrow
  \BC[\hbar][\{y_{\beta,s}\}_{\beta\in \Delta^+}^{1\leq s\leq d_\beta}].
\end{equation}
Split the variables $\{x_{i,r}\}_{i\in I}^{1\leq r\leq l_i}$ into
$\sum_{\beta\in \Delta^+} d_\beta$ groups corresponding to the above
intervals, and specialize the variable $x_{k,\ast}$ in the $s$-th copy of
$[\beta]$ to $y_{\beta,s}+\frac{c_{12}+\ldots+c_{k-1,k}}{2}\hbar$
(so that the $x_{\ast,\ast}$-variables in the $s$-th copy of the interval $[\beta]$
are specialized to various $\hbar$-shifts of the same new variable $y_{\beta,s}$).
For
  $F=\frac{f(x_{1,1},\ldots,x_{n-1,l_{n-1}})}
   {\prod_{i=1}^{n-2}\prod_{1\leq r\leq k_i}^{1\leq r'\leq k_{i+1}} (x_{i,r}-x_{i+1,r'})}
   \in \bar{W}^{V}_{\unl{l}}$,
we finally define $\phi_{\unl{d}}(F)$ as the corresponding specialization
of its numerator $f$.

\begin{Rem}
Note that $\phi_{\unl{d}}(F)$ is independent of our splitting of the variables
$\{x_{i,r}\}_{i\in I}^{1\leq r\leq l_i}$ into groups and is supersymmetric
in $\{y_{\beta,s}\}_{s=1}^{d_\beta}$ for each $\beta\in \Delta^+$
(recall $|\beta|=|\alpha_{j(\beta)}|+\ldots+|\alpha_{i(\beta)}|$).
\end{Rem}

The key properties of the \emph{specialization maps} $\phi_{\unl{d}}$
are summarized in the next two lemmas.

\begin{Lem}\label{lower degrees}
If $\deg(h)<\unl{d}$, then $\phi_{\unl{d}}(\Psi(\sx^+_h))=0$.
\end{Lem}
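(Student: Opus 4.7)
The plan is to expand $\Psi(\sx^+_h)$ as an iterated shuffle product and trace which terms of the ensuing supersymmetrization survive the specialization $\phi_{\unl{d}}$. Writing $\sx^+_h=\prod_{k=1}^m \sx^+_{\beta_k,r_k}$ as in~(\ref{PBWD monomials}), I get $\Psi(\sx^+_h)=\Psi_1\star\cdots\star\Psi_m$ where $\Psi_k:=\Psi(\sx^+_{\beta_k,r_k})$ uses one variable per index $i\in[\beta_k]$ by Lemma~\ref{shuffle root elt}. Iterating~(\ref{shuffle product}), $\Psi(\sx^+_h)$ becomes a $\Sigma_{\unl{l}}$-supersymmetrization of a product $\prod_k\Psi_k(\text{assigned variables})\cdot\prod_{k\neq k'}\prod\zeta_{i,i'}(x_{i,r}-x_{i',r'})$, where the $\zeta$-factors run over pairs of variables assigned to different factors. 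I would verify that at the specialization point each symmetrization term is finite, since every specialized denominator $(x_{i,r}-x_{i+1,r'})$ is nonzero---either the constant $-c_{i,i+1}\hbar/2$ when both variables lie in the same group $G_t$, or a generic $y$-difference otherwise---so each term can be evaluated separately.

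The crucial observation is that $\zeta_{i,i+1}(-c_{i,i+1}\hbar/2)=0$ by~(\ref{zeta-factor}). Therefore a term survives $\phi_{\unl{d}}$ only if within every group $G_t$ no two adjacent-index variables come from different factors. Since each $G_t$ contains exactly one variable per index in its interval $[\gamma_t]$, this forces \emph{all} variables of $G_t$ to come from a single factor $\Psi_{k(t)}$, necessarily with $[\gamma_t]\subseteq[\beta_{k(t)}]$. Conversely, for each fixed $k$ the subintervals $\{[\gamma_t]\}_{k(t)=k}$ partition $[\beta_k]$ into disjoint consecutive subintervals, as $\Psi_k$ supplies one variable per index in $[\beta_k]$. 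Thus a surviving term encodes a \emph{refinement} of the multiset $\{[\beta_k]\}_{k=1}^m$ by the multiset $\{[\gamma_t]\}$: each $[\beta_k]$ is partitioned into consecutive subintervals, and the multiset of all these subintervals equals $\{[\gamma_t]\}$.

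The combinatorial finish is to show no such refinement exists when $\deg(h)<\unl{d}$. Let $\gamma_0=[j_0,i_0]$ be the $\preceq$-smallest root with $d'_{\gamma_0}\neq d_{\gamma_0}$, so $d'_{\gamma_0}>d_{\gamma_0}$ and $d'_\beta=d_\beta$ for $\beta\prec\gamma_0$. Counting refinement pieces of the form $[j_0,i']$ with $i'\leq i_0$: on one hand, this count equals $d_{\gamma_0}+\sum_{\beta\prec\gamma_0,\,j(\beta)=j_0}d_\beta=d_{\gamma_0}+\sum_{\beta\prec\gamma_0,\,j(\beta)=j_0}d'_\beta$; on the other hand, every factor $[\beta_k]=[j_0,i_k]$ with $i_k\leq i_0$ contributes at least its first subinterval $[j_0,\cdot]$ (whose right endpoint is $\leq i_k\leq i_0$), yielding at least $d'_{\gamma_0}+\sum_{\beta\prec\gamma_0,\,j(\beta)=j_0}d'_\beta$ such pieces. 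Comparing gives $d_{\gamma_0}\geq d'_{\gamma_0}$, contradicting $d'_{\gamma_0}>d_{\gamma_0}$. Hence no refinement exists, every symmetrization term vanishes under $\phi_{\unl{d}}$, and $\phi_{\unl{d}}(\Psi(\sx^+_h))=0$. The main subtlety I anticipate is the bookkeeping to justify that the $\zeta$-vanishing is the \emph{only} mechanism that can kill terms and that the above counting truly captures all surviving contributions; the precise combinatorial argument above handles this directly, while the converse (that refinement terms \emph{do} contribute) will be needed separately in Lemma~\ref{same degrees}.
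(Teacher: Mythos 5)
Your overall strategy -- expand the iterated shuffle product, observe $\zeta_{i,i+1}(-\tfrac{c_{i,i+1}}{2}\hbar)=0$, and finish with a combinatorial argument showing no surviving term exists -- is exactly the route taken by the paper, and the refinement/counting argument at the end is a nice explicit elaboration of the paper's assertion.

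However, there is a genuine gap in the step ``Therefore a term survives $\phi_{\unl{d}}$ only if within every group $G_t$ no two adjacent-index variables come from different factors.'' In the iterated shuffle product~(\ref{shuffle product}), the $\zeta$-factor between a variable $x_{a,\ast}$ from factor $\Psi_k$ and a variable $x_{a+1,\ast}$ from factor $\Psi_{k'}$ is $\zeta_{a,a+1}(x_{a,\ast}-x_{a+1,\ast})$ \emph{only} when $k<k'$: the $\zeta$-factor always has the earlier factor's variable as the first argument. After specializing $x_{a,\ast}-x_{a+1,\ast}=-\tfrac{c_{a,a+1}}{2}\hbar$ (same group) this indeed yields $\zeta_{a,a+1}(-\tfrac{c_{a,a+1}}{2}\hbar)=0$. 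But if instead $k>k'$ (the later factor supplies the smaller index) the relevant factor is $\zeta_{a+1,a}(x_{a+1,\ast}-x_{a,\ast})$, which specializes to $\zeta_{a+1,a}(\tfrac{c_{a,a+1}}{2}\hbar)=\pm\bigl(1+\tfrac{c_{a+1,a}}{c_{a,a+1}}\bigr)=\pm 2\neq 0$. So a ``backward'' break (larger factor index at the smaller position) causes no vanishing, and such summands are not eliminated by the stated observation. Consequently the proposal only shows that no summand encodes a genuine refinement of $\{[\beta_k]\}$ by $\{[\gamma_t]\}$; it does not yet rule out summands in which some group's variables span several factors with all breaks going the wrong way. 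To close the gap one must show -- using the specific ordering~(\ref{order}) underlying the PBW monomials -- that any summand in which some group $G_t$ has a break must in fact contain a \emph{forward} break somewhere, and only then does the counting argument apply to every summand. The subtlety you flagged (whether $\zeta$-vanishing is the only killing mechanism) is benign; the one that needs addressing is the directional asymmetry of the $\zeta$-factors.
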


\begin{proof}
The above condition guarantees that $\phi_{\unl{d}}$-specialization of any
summand of the supersymmetrization appearing in $\Psi(\sx^+_h)$ contains among
all the $\zeta$-factors at least one factor of the form
$\zeta_{i,i+1}(-\frac{c_{i,i+1}}{2}\hbar)=0$, hence, it is zero.
The result follows.
\end{proof}

\begin{Lem}\label{same degrees}
The specializations
  $\{\phi_{\unl{d}}(\Psi(\sx^+_h))\}_{h\in H}^{\deg(h)=\unl{d}}$
are linearly independent over $\BC[\hbar]$.
\end{Lem}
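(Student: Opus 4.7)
The plan is to compute the specialization $\phi_{\unl{d}}(\Psi(\sx^+_h))$ explicitly for $\deg(h)=\unl{d}$ and extract a distinguishing super-symmetric ``leading'' factor that uniquely identifies $h$; linear independence will then be immediate from the linear independence of super-monomial symmetric polynomials.

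Unfolding $\Psi(\sx^+_h)$ as an iterated shuffle product of the root elements $\Psi(\sx^+_{\beta,r})$ described in \refl{shuffle root elt}, and using the associativity of $\star$, one writes it as a single supersymmetrization over $\Sigma_{\unl{l}}$ (with $\unl{l}=\sum_{\beta}d_\beta[\beta]$) of a rational function whose numerator is a product of degree-$r$ monomials in the leading variables of each $\sx^+_{\beta,r}$, and whose $\zeta$-factors link the different ``root blocks.'' Applying $\phi_{\unl{d}}$ specializes the variables in the $s$-th copy of the interval $[\beta]$ to explicit $\hbar$-shifts of a single new variable $y_{\beta,s}$. The key claim is that only the permutations $\sigma\in\Sigma_{\unl{l}}$ which permute the $d_\beta$ copies of each interval $[\beta]$ among themselves (equivalently, $\sigma\in\prod_\beta \Sigma_{d_\beta}$ acting on blocks, with no mixing across different $\beta$'s) yield nonzero contributions; for any other $\sigma$, the specialization forces at least one $\zeta$-factor to be evaluated at its zero $z=-c_{ij}\hbar/2$, killing that summand. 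This mirrors the ``wheel-type'' vanishing argument used in the analogous computation of the distinguished case, cf.~\cite[Lemma 7.14]{t}, with the ordering \refe{order} on $\Delta^+$ ensuring that any non-aligned $\sigma$ meets such a zero, in the spirit of the vanishing already exploited to prove \refl{lower degrees}.

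Summing over the surviving $\sigma\in\prod_\beta \Sigma_{d_\beta}$, the specialization factorizes, up to a nonzero scalar $C_{\unl{d}}(\hbar)\in\BC[\hbar]$ and a factor $R(\{y_{\beta,s}\})$ both independent of $h$, as
\[
  \phi_{\unl{d}}(\Psi(\sx^+_h)) \;=\; C_{\unl{d}}(\hbar)\cdot R(\{y_{\beta,s}\})\cdot
  \prod_{\beta\in\Delta^+} \SSym_{\Sigma_{d_\beta}}\!\left(\prod_{s=1}^{d_\beta} y_{\beta,s}^{r_{\beta,s}(h)}\right),
\]
where the multiset $\{r_{\beta,s}(h)\}_{s=1}^{d_\beta}$ records the spectral parameters assigned to the root $\beta$ by $h$. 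Since for each $\beta$ these super-monomial (skew-)symmetric polynomials form a basis of the super-symmetric polynomials in $d_\beta$ variables (using $h(\beta,r)\le 1$ when $|\beta|=\bar1$), their products over $\beta$ are linearly independent, and the common factors $C_{\unl{d}}(\hbar),R$ do not obstruct this. The main technical obstacle is the permutation-level bookkeeping in the second step: tracking the signs produced by odd simple roots under supersymmetrization and verifying in each case that a ``misaligned'' $\sigma$ hits a vanishing $\zeta$-factor. But this is a direct, if combinatorially delicate, generalization of the already-settled distinguished-Dynkin case in \cite{t}.
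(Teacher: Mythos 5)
Your high-level strategy is the same as the paper's: identify the surviving permutations as $\sigma\in\prod_{\beta}\Sigma_{d_\beta}$, and reduce the linear independence to a rank-one computation within each $\beta$-block. But the explicit factorization you write down is wrong in the even case, and this is not a bookkeeping issue you can defer — it changes what object you need to know is linearly independent.

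Concretely: for $|\beta|=\bar{0}$, the diagonal $\zeta$-factors inside a block are nontrivial (since $c_{ii}=\pm 2$ for even $\alpha_i$, so $\zeta_{i,i}(z)=1\pm\hbar/z$), and after specialization they produce, within each block $\beta$, $\sigma$-dependent ratios of the form
\[
\prod_{s<s'}\frac{y_{\beta,\sigma(s)}-y_{\beta,\sigma(s')}+(-1)^{\ol{j(\beta)}}\hbar}{y_{\beta,\sigma(s)}-y_{\beta,\sigma(s')}}\,,
\]
which cannot be pulled into an $h$-independent prefactor $R$. Consequently the block-wise sum over $\sigma_\beta$ is \emph{not} $\SSym_{\Sigma_{d_\beta}}\bigl(\prod_s y_{\beta,s}^{r_{\beta,s}(h)}\bigr)$; it is the rank-one \emph{shuffle product} $p_{\beta,r_1}(x)\star\cdots\star p_{\beta,r_{d_\beta}}(x)$ evaluated at $\{y_{\beta,s}\}$. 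For even $\beta$ this is a genuinely different function — a triangular, $\hbar$-dependent combination of monomial symmetric polynomials, not a single one. Your formula discards the $\hbar$-corrections that distinguish the two, so the identity you assert simply does not hold. (For odd $\beta$, $c_{ii}=0$ and the block factor really is the skew-symmetrized monomial, so your formula is correct there.)

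The fix is exactly what the paper does: write the specialization as a product of $\sigma$-independent factors $G_{\beta,\beta'}$, $G_\beta$ times $\prod_\beta\bigl(\sum_{\sigma_\beta}G_\beta^{(\sigma_\beta)}\bigr)$, recognize each block sum as the rank-one shuffle product in $\bar{W}^{V'_\beta}_{d_\beta}$ for a two-dimensional auxiliary superspace $V'_\beta$, and then invoke Lemmas~\ref{L:rank 1 odd} and~\ref{L:rank 1 even} (which assert precisely that these rank-one shuffle products are a $\BC[\hbar]$-basis, by a triangularity argument in the monomial symmetric polynomial basis). You cannot bypass Lemma~\ref{L:rank 1 even}; it is doing real work in the even case. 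Your proof as written proves linear independence of the wrong family of functions.
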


\begin{proof}
Consider the image of
  $\sx^+_h=
   \prod\limits_{(\beta,r)\in \Delta^+\times \BN}^{\rightarrow} {\sx^+_{\beta,r}}^{h(\beta,r)}$
under $\Psi$. It is a sum of $(\sum_{\beta\in \Delta^+} d_{\beta})!$ terms,
and as in the proof of Lemma~\ref{lower degrees} most of them specialize
to zero under $\phi_{\unl{d}}$, $\unl{d}:=\deg(h)$. The summands which
do not specialize to zero are parametrized by
  $\Sigma_{\unl{d}}:=\prod_{\beta\in \Delta^+} \Sigma_{d_{\beta}}$.
More precisely, given
  $(\sigma_\beta)_{\beta\in \Delta^+}\in \Sigma_{\unl{d}}$,
the associated summand corresponds to the case when for all
  $\beta\in \Delta^+$ and $1\leq s\leq d_\beta$,
the $(\sum_{\beta'<\beta}d_{\beta'}+s)$-th factor of the
corresponding term of $\Psi(\sx^+_h)$ is evaluated at
  $\{y_{\beta,\sigma_\beta(s)}+
   \frac{c_{12}+\ldots+c_{k-1,k}}{2}\hbar\}_{j(\beta)\leq k\leq i(\beta)}$.
Similar to~\cite[Lemma 3.15]{t}, the image of this summand under
$\phi_{\unl{d}}$ may be written in the form
  $\prod_{\beta,\beta'\in \Delta^+}^{\beta<\beta'} G_{\beta,\beta'}
   \cdot \prod_{\beta\in \Delta^+}G_\beta
   \cdot \prod_{\beta\in \Delta^+} G_\beta^{(\sigma_\beta)}$
(up to a sign) with the factors
  $G_{\beta,\beta'}, G_\beta, G_\beta^{(\sigma_\beta)}$
to be specified below.

The factor $G_{\beta,\beta'}\ (\beta<\beta')$ arises as a product
of the specializations of the $\zeta$-factors (note that we ignore the
denominator $z$ in $\zeta_{k,k\pm 1}(z)$, but not in $\zeta_{k,k}(z)$)
among two variables, which are getting specialized to $\hbar$-shifts of
$y_{\beta,\ast}$ and $y_{\beta',\ast}$. Explicitly, we have
\begin{equation}\label{Factor 1}
\begin{split}
  & G_{\beta,\beta'}=
    \prod_{1\leq s\leq d_\beta}^{1\leq s'\leq d_{\beta'}} \prod_{k=j(\beta)}^{i(\beta)}
    \left\{(y_{\beta,s}-y_{\beta',s'})^{\delta_{k,j(\beta')-1}-\delta_{k,i(\beta')}}\times\right.\\
  & \left. (y_{\beta,s}-y_{\beta',s'}-(-1)^{\ol{k}}\hbar)^{\delta_{k-1\in [\beta']}}
           (y_{\beta,s}-y_{\beta',s'}+\delta_{|\alpha_k|,\bar{0}}(-1)^{\ol{k}}\hbar)^{\delta_{k\in [\beta']}}\right\}.
\end{split}
\end{equation}
In particular, the total power of $(y_{\beta,s}-y_{\beta',s'})$ in
$G_{\beta,\beta'}$ is nonnegative and equals
\begin{equation}\label{vanishing along diagonal}
  \#\{k| [\beta]\ni k\in [\beta'], |\alpha_k|=\bar{1}\}+
  \delta_{j(\beta)<j(\beta')}\delta_{i(\beta)+1\in [\beta']}.
\end{equation}

Likewise, the total factor $G_\beta\cdot G_\beta^{(\sigma_\beta)}$
arises as a product of:

\noindent
1) the specializations of $\Psi(\sx^+_{\beta,\ast})$,

\noindent
2) the specializations of the $\zeta$-factors (note that we ignore the
denominator $z$ in $\zeta_{k,k\pm 1}(z)$, but not in $\zeta_{k,k}(z)$) among
two variables, which are getting specialized to $\hbar$-shifts of $y_{\beta,\ast}$.

Due to Lemma~\ref{shuffle root elt}, the total contribution
of the specializations in 1) equals
\begin{equation}\label{combined factor 1}
  \hbar^{d_\beta(i(\beta)-j(\beta))}\cdot
  \prod_{s=1}^{d_\beta} p_{\beta,r_\beta(h,s)}(y_{\beta,\sigma_\beta(s)}),
\end{equation}
where the collection $\{r_\beta(h,1),\ldots,r_\beta(h,d_\beta)\}$ is obtained
by listing every $r\in \BN$ with multiplicity $h(\beta,r)>0$ in the non-decreasing
order and $p_{\beta,r}(y)$ are degree $r$ monic polynomials (obtained by evaluating
the monomials $p$ of Lemma~\ref{shuffle root elt} at $\hbar$-shifts of $y$).

\noindent
On the other hand, the total contribution of the specializations in 2) equals
\begin{equation}\label{combined factor 2}
\begin{split}
  & \prod_{1\leq s <s' \leq d_\beta}\prod_{j(\beta)<j\leq i(\beta)}
    \left((y_{\beta,\sigma_\beta(s)}-y_{\beta,\sigma_\beta(s')}-\hbar)
          (y_{\beta,\sigma_\beta(s)}-y_{\beta,\sigma_\beta(s')}+\hbar)\right)^{\delta_{|\alpha_j|,\bar{0}}}\times\\
  & \prod_{1\leq s <s' \leq d_\beta}\prod_{j(\beta)<j\leq i(\beta)}
    \left((y_{\beta,\sigma_\beta(s)}-y_{\beta,\sigma_\beta(s')}-(-1)^{\bar{j}}\hbar)
          (y_{\beta,\sigma_\beta(s)}-y_{\beta,\sigma_\beta(s')})\right)^{\delta_{|\alpha_j|,\bar{1}}}\times\\
  & \prod_{1\leq s <s' \leq d_\beta}
    \left(\frac{y_{\beta,\sigma_\beta(s)}-y_{\beta,\sigma_\beta(s')}+(-1)^{\overline{j(\beta)}}\hbar}
               {y_{\beta,\sigma_\beta(s)}-y_{\beta,\sigma_\beta(s')}}\right)^{\delta_{|\alpha_{j(\beta)}|,\bar{0}}}.
\end{split}
\end{equation}

While the product of the factors $G_\beta$ and $G_\beta^{(\sigma_\beta)}$
equals the product of expressions~(\ref{combined factor 1},~\ref{combined factor 2}),
we define each of them separately as follows:
\begin{equation}\label{Factor 2}
    G_{\beta}=
    \hbar^{d_\beta(i(\beta)-j(\beta))}\cdot
    \prod_{1\leq s\ne s'\leq d_{\beta}}
      (y_{\beta,s}-y_{\beta,s'})^{\lfloor \frac{\odd(\beta)}{2}\rfloor}
      (y_{\beta,s}-y_{\beta,s'}+\hbar)^{\even(\beta)+\lfloor \frac{\odd(\beta)-1}{2}\rfloor},
\end{equation}
\begin{equation}\label{Factor 3}
    G_\beta^{(\sigma_\beta)}=
    \prod_{s=1}^{d_\beta} p_{\beta,r_\beta(h,s)}(y_{\beta,\sigma_\beta(s)})\cdot
     \begin{cases}
       \prod_{s<s'} \frac{y_{\beta,\sigma_\beta(s)}-y_{\beta,\sigma_\beta(s')}+(-1)^{\ol{j(\beta)}}\hbar}
                    {y_{\beta,\sigma_\beta(s)}-y_{\beta,\sigma_\beta(s')}}, & \text{if } |\beta|=\bar{0}\\
       (-1)^{\sigma_\beta}, & \text{if } |\beta|=\bar{1}
     \end{cases},
\end{equation}
where
\begin{equation*}
  \even(\beta):=\#\{k\in [\beta]| |\alpha_k|=\bar{0}\}
  \ \mathrm{and}\
  \odd(\beta):=\#\{k\in [\beta]| |\alpha_k|=\bar{1}\}.
\end{equation*}
It is straightforward to verify that the products of~(\ref{Factor 2},~\ref{Factor 3})
and~(\ref{combined factor 1},~\ref{combined factor 2}) indeed coincide.

\medskip

Note that the factors
  $\{G_{\beta,\beta'}\}_{\beta<\beta'}\cup\{G_\beta\}_{\beta}$
of~(\ref{Factor 1},~\ref{Factor 2}) are independent of
  $(\sigma_\beta)_{\beta\in \Delta^+}\in \Sigma_{\unl{d}}$.
Therefore, the specialization $\phi_{\unl{d}}(\Psi(\sx^+_h))$
has the following form:
\begin{equation}\label{explicit formula for same degrees}
  \phi_{\unl{d}}(\Psi(\sx^+_h))=\pm
  \prod_{\beta,\beta'\in \Delta^+}^{\beta<\beta'} G_{\beta,\beta'}\cdot
  \prod_{\beta\in \Delta^+}G_\beta\cdot
  \prod_{\beta\in \Delta^+}
    \left(\sum_{\sigma_\beta\in \Sigma_{d_{\beta}}} G_\beta^{(\sigma_\beta)}\right).
\end{equation}

For $\beta\in \Delta^+$, consider a two-dimensional superspace
$V'_\beta$ with basis vectors $\sfv'_1$ and $\sfv'_2$ having the parity
$\ol{j(\beta)}$ and $\ol{i(\beta)}$, respectively. Then, the sum
  $\sum_{\sigma_\beta\in \Sigma_{d_{\beta}}} G_\beta^{(\sigma_\beta)}$
coincides with the value of the shuffle element
  $p_{\beta,r_\beta(h,1)}(x)\star\cdots \star p_{\beta,r_\beta(h,d_\beta)}(x)
   \in \bar{W}^{V'_\beta}_{d_\beta}$
evaluated at $\{y_{\beta,s}\}_{s=1}^{d_\beta}$. The latter elements are
linearly independent (they form a basis of $\bar{W}^{V'_\beta}_{d_\beta}$),
due to Lemmas~\ref{rank 1 odd} and~\ref{rank 1 even}.

Thus,~(\ref{explicit formula for same degrees}) together with the above
observation completes our proof of Lemma~\ref{same degrees}.
\end{proof}

\begin{Cor}\label{Injectivity of Psi}
The homomorphism $\Psi\colon Y^+_\hbar(\ssl(V))\to \bar{W}^V$ is injective.
\end{Cor}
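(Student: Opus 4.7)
The plan is to deduce injectivity from the PBW basis $\{\sx^+_h\}_{h \in H}$ of $Y^+_\hbar(\ssl(V))$ provided by Theorem~\ref{PBW for formal Yangian} combined with the two properties of the specialization maps $\phi_{\unl{d}}$ recorded in Lemmas~\ref{lower degrees} and~\ref{same degrees}. This is a standard leading-term argument organized by the lexicographic order on $\Delta^+$-degrees introduced in Section~\ref{ssec usual shuffle algebra}.

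First, I would observe that $\Psi$ respects the $\BN^I$-grading (each $\sx^+_{i,r}$ is sent to a monomial in the variables $x_{i,\ast}$), so it suffices to prove injectivity on each $\unl{l}$-homogeneous piece. Fix $\unl{l} \in \BN^I$ and suppose $y = \sum_{h} c_h \sx^+_h$ lies in $\ker \Psi$, where the sum runs over those $h \in H$ with $\sum_{\beta \in \Delta^+} d_\beta(h)\,[\beta] = \unl{l}$ and $c_h \in \BC[\hbar]$. Assume for contradiction that the support $S := \{h : c_h \neq 0\}$ is nonempty.

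Next, choose $\unl{d}^{\max}$ to be the lex-maximal element of the finite set $\{\deg(h) : h \in S\} \subset \BN^{\Delta^+}$, and apply $\phi_{\unl{d}^{\max}}$ to the relation $\Psi(y)=0$. By Lemma~\ref{lower degrees}, every summand with $\deg(h) < \unl{d}^{\max}$ (i.e.\ lex-strictly smaller) contributes zero, and so we obtain
\begin{equation*}
  0 \;=\; \phi_{\unl{d}^{\max}}(\Psi(y))
  \;=\; \sum_{h\in S,\,\deg(h)=\unl{d}^{\max}} c_h\,\phi_{\unl{d}^{\max}}(\Psi(\sx^+_h)).
\end{equation*}
By Lemma~\ref{same degrees}, the specializations on the right-hand side are linearly independent over $\BC[\hbar]$, forcing $c_h=0$ for all $h\in S$ with $\deg(h)=\unl{d}^{\max}$, which contradicts $\unl{d}^{\max} \in \{\deg(h) : h\in S\}$.

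The substantive work has already been absorbed into Lemmas~\ref{lower degrees} and~\ref{same degrees}, so there is no real obstacle at this stage; the only point requiring care is the nonstandard lex convention (in which lex-larger tuples have a \emph{smaller} value at the earliest differing coordinate), which must be respected so that Lemma~\ref{lower degrees} annihilates exactly the non-maximal terms. Once that is done, the corollary follows as a brief formal consequence of the triangular structure.
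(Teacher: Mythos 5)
Your proposal is correct and matches the paper's own argument essentially verbatim: both fix the PBW expansion $x=\sum_h c_h\sx^+_h$, pass to the lex-maximal degree $\unl{d}$ among the nonzero coefficients, apply the specialization map $\phi_{\unl{d}}$ so that Lemma~\ref{lower degrees} annihilates all lower-degree terms and Lemma~\ref{same degrees} forces the top-degree coefficients to vanish. Your preliminary remark about restricting to a single $\BN^I$-graded piece is a mild clarification of something the paper leaves implicit (since $\phi_{\unl{d}}$ is only defined on $\bar W^V_{\unl l}$ with $\unl l=\sum_\beta d_\beta[\beta]$), but it does not change the substance of the argument.
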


\begin{proof}
Assume the contrary, that there is a nonzero $x\in Y^+_\hbar(\ssl(V))$
such that $\Psi(x)=0$. Due to Theorem~\ref{PBW for formal Yangian}, $x$
may be written in the form $x=\sum_{h\in H} c_h \sx^+_h$, where all but
finitely many of $c_h\in \BC[\hbar]$ are zero. Define
$\unl{d}:=\max\{\deg(h)|c_h\ne 0\}$. Applying the \emph{specialization map}
$\phi_{\unl{d}}$ to $\Psi(x)=0$, we get
  $\sum_{h\in H}^{\deg(h)=\unl{d}} c_h \phi_{\unl{d}}(\Psi(\sx^+_h))=0$
by Lemma~\ref{lower degrees}. Furthermore, we get $c_h=0$ for all $h\in H$
with $\deg(h)=\unl{d}$, due to Lemma~\ref{same degrees}.
This contradicts our choice of $\unl{d}$.

This completes our proof of the injectivity of $\Psi$.
\end{proof}


\subsection{Proofs of the main results}\label{ssec image description}
\

In this section, we describe the images of $Y^+_\hbar(\ssl(V))$ and
$\bY^+_\hbar(\ssl(V))$ under $\Psi$. To present the former, we make
the following definition:

\begin{Def}\label{good element yangian}
$F\in \bar{W}^{V}_{\unl{k}}$ is \textbf{good} if $\phi_{\unl{d}}(F)$
is divisible by $\hbar^{\sum_{\beta\in \Delta^+} d_\beta(i(\beta)-j(\beta))}$
for any degree vector $\unl{d}=\{d_\beta\}_{\beta\in \Delta^+}$ such that
$\unl{k}=\sum_{\beta\in \Delta^+} d_\beta[\beta]$.
\end{Def}

Set
  $W^{V}:=\underset{\unl{k}\in \BN^I}\bigoplus W^{V}_{\underline{k}}$,
where $W^{V}_{\unl{k}}\subset \bar{W}^{V}_{\unl{k}}$ denotes the
$\BC[\hbar]$-submodule of all \emph{good} elements.

\begin{Rem}\label{good for rk 1}
(a) For $\dim(V)=2$, we have $W^V=\bar{W}^V$.

\noindent
(b) $\fW^V\subseteq W^V$ as any \emph{integral} shuffle element
is obviously \emph{good}.
\end{Rem}

\begin{Lem}\label{necessity for Yangian}
$\Psi(Y^+_\hbar(\ssl(V)))\subseteq W^{V}$.
\end{Lem}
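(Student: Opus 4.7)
The plan is to reduce the lemma to two claims: (i) the image of each algebra generator $\sx^+_{i,r}$ lies in $W^V$, and (ii) the subspace $W^V\subset\bar{W}^V$ is closed under the shuffle product $\star$. Since $W^V$ is manifestly $\BC[\hbar]$-linear (divisibility by a fixed power of $\hbar$ is preserved under $\BC[\hbar]$-linear combinations) and $\Psi(Y^+_\hbar(\ssl(V)))$ is generated as a $\star$-subalgebra by the elements $\Psi(\sx^+_{i,r})$, together (i) and (ii) imply the desired inclusion. Claim (i) is immediate: $\Psi(\sx^+_{i,r})=x_{i,1}^r\in\bar{W}^V_{e_i}$, the unique decomposition of $e_i$ is $\unl{d}=e_{\alpha_i}$, and the required divisibility is by $\hbar^{i(\alpha_i)-j(\alpha_i)}=\hbar^0=1$.

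For claim (ii), I would evaluate $\phi_{\unl{d}}(F\star G)$ for good $F\in W^V_{\unl{k}}$, $G\in W^V_{\unl{l}}$ by first rewriting the full-denominator numerator of $F\star G$. Writing $F=f_F/D_F$ and $G=f_G/D_G$ in their canonical pole presentations, and using that the product of $(x_{i,r}-x_{i+1,r'})$ over \emph{all} pairs is symmetric in each color, one obtains
\[
D_{\unl{k}+\unl{l}}\cdot(F\star G)\,=\,\unl{k}!\,\unl{l}!\cdot\SSym\!\Bigl(f_F\,f_G\,D_{\mathrm{cross}}\,\textstyle\prod_{\mathrm{cross}}\zeta\Bigr),
\]
where $D_{\mathrm{cross}}$ is the product of $(x_{i,r}-x_{i+1,r'})$ over adjacent-color pairs with exactly one variable coming from the $F$-block. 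The key algebraic simplification is the identity $\zeta_{i,i+1}(z)\cdot z=(2z+c_{i,i+1}\hbar)/2$, which turns every adjacent-color cross term into an honest polynomial in $z$ and $\hbar$ (non-adjacent $\zeta$'s being mere signs). Hence the integrand is a genuine polynomial in the ambient variables and $\hbar$, and $\phi_{\unl{d}}$ can be applied term-by-term to the $\SSym$.

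The main step is an intra-cell case analysis. Each summand of $\SSym$ assigns every ambient variable to either $F$ or $G$; combining this with the cell grouping of $\phi_{\unl{d}}$, each cell $[\beta]=[j,i]$ of $\unl{d}$ inherits an $F/G$-assignment on its $i-j+1$ positions. The decisive observation is that only three intra-cell assignments survive the specialization: ``all $F$'', ``all $G$'', or a split at some $l\in[j,i-1]$ with positions $[j,l]$ assigned to $G$ and $[l+1,i]$ assigned to $F$ (a single $G$-to-$F$ transition). Any assignment containing an $F$-to-$G$ transition at some $k$ produces a combined factor $(2(x^F_k-x^G_{k+1})+c_{k,k+1}\hbar)/2$ that vanishes at the cell specialization $x^F_k-x^G_{k+1}=-c_{k,k+1}\hbar/2$. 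In each of the three surviving cases, the cell contributes at least $\hbar^{i(\beta)-j(\beta)}$ to the $\hbar$-valuation: the ``all $F$'' and ``all $G$'' cases use the goodness of $F$ and $G$ directly, while the split case uses goodness of $F$ on the sub-interval $[l+1,i]$ (giving $\hbar^{i-l-1}$), goodness of $G$ on $[j,l]$ (giving $\hbar^{l-j}$), and the single $G$-to-$F$ transition factor evaluating to $\mp c_{l,l+1}\hbar$ supplies the missing power of $\hbar$. Inter-cell combined factors are polynomials in $\hbar$ of nonnegative valuation. Multiplying across all cells yields divisibility of $\phi_{\unl{d}}(F\star G)$ by $\hbar^{\sum_\beta d_\beta(i(\beta)-j(\beta))}$, and hence $F\star G\in W^V$.

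The hard part will be this intra-cell case analysis, especially justifying the asymmetry between $F$-to-$G$ transitions (which annihilate the summand via the vanishing of the combined $\zeta\cdot D_{\mathrm{cross}}$ factor) and $G$-to-$F$ transitions (which survive and provide exactly the extra power of $\hbar$ needed to make the divisibility estimate tight in the split case).
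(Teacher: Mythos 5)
Your strategy---show each generator image lies in $W^V$ and that $W^V$ is $\star$-closed---is a genuinely different and strictly stronger route than the paper's. The paper sidesteps proving $\star$-closure: since $Y^+_\hbar(\ssl(V))$ is $\BC[\hbar]$-spanned by monomials $\sx^+_{i_1,r_1}\cdots\sx^+_{i_m,r_m}$ in the \emph{simple} generators, it suffices to bound the $\hbar$-divisibility of $\phi_{\unl{d}}\bigl(x_{i_1,1}^{r_1}\star\cdots\star x_{i_m,1}^{r_m}\bigr)$. In such a product each shuffle factor carries exactly one variable, so \emph{every} pair of variables is a cross pair; in particular, for each $\beta\in\Delta^+$, $1\leq s\leq d_\beta$, $j(\beta)\leq k<i(\beta)$, the adjacent-color pair $(x_{k,\ast},x_{k+1,\ast})$ in the $s$-th copy of $[\beta]$ carries a $\zeta$-times-pole factor which specializes to $0$ or a multiple of $\hbar$, and there are exactly $\sum_\beta d_\beta(i(\beta)-j(\beta))$ such pairs. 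No intra-cell case analysis is needed. Your closure argument, via the per-cell $F/G$-coloring and the observation that only $G^aF^b$ colorings survive (any $F$-to-$G$ transition producing a vanishing factor), is more intricate but proves the structural fact that $W^V$ is a subalgebra, and your bookkeeping in the split case---goodness of $F$ on $[l+1,i(\beta)]$, goodness of $G$ on $[j(\beta),l]$, plus the lone $G$-to-$F$ transition factor $\mp c_{l,l+1}\hbar$---does combine correctly to give $\hbar^{i(\beta)-j(\beta)}$ per cell.

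One genuine slip needs fixing. You assert that after absorbing $D_{\mathrm{cross}}$ into the adjacent-color $\zeta$'s the integrand becomes a polynomial, ``non-adjacent $\zeta$'s being mere signs.'' This is false for same-color cross pairs with $|\alpha_i|=\bar 0$: there $c_{ii}=\pm 2$, so $\zeta_{i,i}(z)=1+c_{ii}\hbar/(2z)$ has a pole at $z=0$ that $D_{\mathrm{cross}}$ (built only from adjacent-color factors $(x_{i,r}-x_{i+1,r'})$) does not cancel. The individual $\SSym$-summands are therefore rational functions with poles along $x_{i,r}=x_{i,r'}$; only their sum is a polynomial. The argument survives because under $\phi_{\unl{d}}$ two same-color variables necessarily lie in distinct cells, so the offending factors specialize to $1+c_{ii}\hbar/\bigl(2(y_{\beta,s}-y_{\beta',s'})\bigr)$, whose $\hbar$-adic valuation is $0$. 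But the correct formulation is: each surviving specialized summand lies in $\hbar^{N}\cdot\BC[\hbar,\{y_{\beta,s}\}]\bigl[\{(y_{\beta,s}-y_{\beta',s'})^{-1}\}\bigr]$ with $N=\sum_\beta d_\beta(i(\beta)-j(\beta))$, and since the total is a polynomial, it lies in $\hbar^N\BC[\hbar,\{y_{\beta,s}\}]$. The claim ``inter-cell combined factors are polynomials in $\hbar$'' fails for the same reason and should be replaced by ``have nonnegative $\hbar$-adic valuation after inverting the $y$-diagonals.''
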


\begin{proof}
The proof is completely analogous to that of~\cite[Lemma 6.19]{t}
for the particular case of~(\ref{distinguished Dynkin}).
For any $\beta\in \Delta^+,1\leq s\leq d_\beta$ and $j(\beta)\leq k<i(\beta)$,
$\zeta$-factors between the variables $x_{k,\ast}$ and $x_{k+1,\ast}$ that
are specialized to $\hbar$-shifts of $y_{\beta,s}$ always specialize
under $\phi_{\unl{d}}$ to a multiple of $\hbar$. It remains to note that
the total number of such pairs is exactly
$\sum_{\beta\in \Delta^+} d_\beta(i(\beta)-j(\beta))$.
\end{proof}

The following is the first key result of this section:

\begin{Thm}\label{hard shuffle yangian}
The $\BC[\hbar]$-algebra embedding
$\Psi\colon Y^+_\hbar(\ssl(V))\hookrightarrow \bar{W}^{V}$ gives rise to a
$\BC[\hbar]$-algebra isomorphism $\Psi\colon Y^+_\hbar(\ssl(V))\iso W^{V}$.
\end{Thm}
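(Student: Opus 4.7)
By Corollary~\ref{Injectivity of Psi} and Lemma~\ref{necessity for Yangian}, we already know $\Psi\colon Y^+_\hbar(\ssl(V))\hookrightarrow W^V$. Hence the task reduces to proving surjectivity, and the plan is to do this by induction on the lexicographic order on degree vectors $\unl{d}=\{d_\beta\}_{\beta\in \Delta^+}$ introduced in Section~\ref{ssec usual shuffle algebra}, exploiting the specialization maps $\phi_{\unl{d}}$ from~(\ref{specialization map}).

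Given a nonzero good element $F\in W^{V}_{\unl{k}}$, the first step is to identify the lex-maximal degree vector $\unl{d}_{\max}$ among those with $\sum_{\beta\in \Delta^+} d_\beta[\beta]=\unl{k}$ for which $\phi_{\unl{d}_{\max}}(F)\neq 0$; the set of such $\unl{d}$ is finite since $|\unl{d}|$ is bounded in terms of $|\unl{k}|$. The second step is the key factorization: I would show that the specialization $\phi_{\unl{d}_{\max}}(F)$ is divisible by the $\sigma$-independent factor $\prod_{\beta<\beta'} G_{\beta,\beta'}\cdot \prod_\beta G_\beta$ appearing in~(\ref{explicit formula for same degrees}). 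The $\hbar^{\sum_\beta d_\beta (i(\beta)-j(\beta))}$ power in $\prod_\beta G_\beta$ is provided directly by the goodness condition (Definition~\ref{good element yangian}). The remaining polynomial factors of the form $(y_{\beta,s}-y_{\beta',s'}\pm\hbar)$ and $(y_{\beta,s}-y_{\beta,s'}\pm\hbar)$ should come from the wheel conditions~(\ref{wheel condition 1 super yangian},~\ref{wheel condition 2 super yangian}), while vanishing on the diagonals $y_{\beta,s}=y_{\beta',s'}$ (cf.~(\ref{vanishing along diagonal})) and on the analogous diagonals for $\beta=\beta'$ should come from the pole conditions~(\ref{pole conditions super yangian}) together with the supersymmetry properties: whenever two copies of the interval $[\beta]\subset[\beta']$ (or two copies of the same $[\beta]$) share a specialized variable, the numerator must vanish in order for the specialization to be regular.

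Once this factorization is in place, the quotient $\phi_{\unl{d}_{\max}}(F)/\bigl(\prod_{\beta<\beta'}G_{\beta,\beta'}\cdot \prod_\beta G_\beta\bigr)$ is a polynomial in $\{y_{\beta,s}\}$ that is supersymmetric in each block $\{y_{\beta,s}\}_{s=1}^{d_\beta}$ (with parity $|\beta|$), and by construction splits as a product $\prod_{\beta}Q_\beta(y_{\beta,1},\ldots,y_{\beta,d_\beta})$ of rank-one shuffle elements in $\bar{W}^{V'_\beta}_{d_\beta}$. Invoking the rank-one basis results (Lemmas~\ref{rank 1 odd} and~\ref{rank 1 even}), each $Q_\beta$ is a $\BC[\hbar]$-linear combination of ordered $\star$-products of the generators $p_{\beta,r}(x)$, so the factorization~(\ref{explicit formula for same degrees}) lets me assemble an explicit element $y=\sum_{\deg(h)=\unl{d}_{\max}} c_h\,\sx^+_h$ with $\phi_{\unl{d}_{\max}}(\Psi(y))=\phi_{\unl{d}_{\max}}(F)$; the linear independence needed to do this matching uniquely is exactly Lemma~\ref{same degrees}.

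The third step is to replace $F$ by $F-\Psi(y)\in W^V_{\unl{k}}$. By Lemma~\ref{lower degrees} and the choice of $\unl{d}_{\max}$, all specializations $\phi_{\unl{d}}(F-\Psi(y))$ with $\unl{d}\geq \unl{d}_{\max}$ in lex order vanish, so the induction hypothesis finishes the job; the base case is when no $\unl{d}$ gives a nonzero specialization, in which case an argument analogous to Corollary~\ref{Injectivity of Psi} (iterated specialization) shows $F=0$. The main obstacle I anticipate is the divisibility claim in the second step: namely, extracting the correct $y$-polynomial factors of $G_{\beta,\beta'}$ and $G_\beta$ from the wheel and pole conditions is a bookkeeping exercise that mixes the two kinds of wheel conditions~(\ref{wheel condition 1 super yangian},~\ref{wheel condition 2 super yangian}) according to the parities $|\alpha_k|$ inside the intervals $[\beta]\cap [\beta']$. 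Modulo this, the argument is a direct adaptation of the strategy of~\cite[Theorem 7.15]{t} from the distinguished Dynkin diagram to the general one.
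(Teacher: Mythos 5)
Your overall strategy (descending induction via the specialization maps, Lemma~\ref{lower degrees} to kill higher degrees, Lemma~\ref{same degrees} plus the rank-one Lemmas~\ref{rank 1 odd},~\ref{rank 1 even} to match the top-degree specialization) is the paper's strategy, and the final bookkeeping step is indeed correct. However, there is a genuine gap in the justification of the divisibility of $\phi_{\unl{d}_{\max}}(F)$ by $\prod_{\beta<\beta'}G_{\beta,\beta'}\cdot\prod_\beta G_\beta$. You correctly assign the $\hbar$-power to goodness, the $(y_{\beta,s}-y_{\beta',s'}\pm\hbar)$ factors to the wheel conditions, and the order $\#\{k\mid[\beta]\ni k\in[\beta'],\,|\alpha_k|=\bar{1}\}$ of vanishing along diagonals to skew-symmetry in the odd colors. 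But the remaining factor of $(y_{\beta,s}-y_{\beta',s'})$, coming from the summand $\delta_{j(\beta)<j(\beta')}\delta_{i(\beta)+1\in[\beta']}$ in~(\ref{vanishing along diagonal}) (the ``crossing intervals'' case, where $j(\beta)<j(\beta')$ and $i(\beta)+1\in[\beta']$), does \emph{not} follow from the pole conditions~(\ref{pole conditions super yangian}). Recall that $\phi_{\unl{d}}$ is defined as the specialization of the \emph{numerator} $f$ alone, and under $y_{\beta,s}=y_{\beta',s'}$ no pair of $x_{k,\ast}$ and $x_{k+1,\ast}$ variables collide (they remain offset by $\tfrac{c_{k,k+1}}{2}\hbar\neq 0$); for even colors $k\in[\beta]\cap[\beta']$ the coinciding $x_{k,\ast}$ variables impose no vanishing either. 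Your phrase ``the numerator must vanish in order for the specialization to be regular'' is not applicable here: regularity of $\phi_{\unl{d}}(F)$ is automatic, and the case $[\beta]\subset[\beta']$ you describe explicitly is exactly the one where the extra factor is absent (as there $\delta_{j(\beta)<j(\beta')}=0$).

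The missing ingredient, which is the one nontrivial structural idea in the proof, is that this extra diagonal vanishing is forced by the vanishing of \emph{higher} specializations. Namely, for crossing $\beta,\beta'$ one forms $\gamma$ with $[\gamma]=[\beta]\cup[\beta']$ and $\gamma'$ with $[\gamma']=[\beta]\cap[\beta']$ (possibly empty), so that the degree vector $\unl{d}'=\unl{d}+\delta_\gamma+\delta_{\gamma'}-\delta_\beta-\delta_{\beta'}$ satisfies $\unl{d}'>\unl{d}$ and $\sum d'_\alpha[\alpha]=\unl{k}$. The assumption $\phi_{\unl{d}'}(F)=0$ for all $\unl{d}'>\unl{d}$ (available in your setup, since you chose $\unl{d}_{\max}$ as the maximal nonvanishing specialization) then forces the extra vanishing of $\phi_{\unl{d}}(F)$ along $y_{\beta,s}=y_{\beta',s'}$. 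So the fix is available to you, but as written the proof has a hole precisely at the step you yourself flag as the ``main obstacle.''
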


\begin{proof}
We need to show that any \emph{good} element $F\in W^{V}_{\unl{k}}$ belongs
to the submodule $M\cap W^{V}_{\unl{k}}$, where $M\subset W^{V}$ denotes
the $\BC[\hbar]$-submodule spanned by $\{\Psi(\sx^+_h)\}_{h\in H}$.
Let $T_{\unl{k}}$ denote a finite set consisting of all degree vectors
$\unl{d}=\{d_\beta\}_{\beta\in \Delta^+}\in \BN^{\frac{n(n-1)}{2}}$ such that
$\sum_{\beta\in \Delta^+} d_\beta [\beta]=\unl{k}$. We order $T_{\unl{k}}$ with
respect to the lexicographical ordering on $\BN^{\frac{n(n-1)}{2}}$. In particular,
the minimal element $\unl{d}_{\min}=\{d_\beta\}_{\beta\in \Delta^+}\in T_{\unl{k}}$
is characterized by $d_\beta=0$ for all non-simple roots $\beta\in \Delta^+$.

The proof is crucially based on the following result:

\begin{Lem}\label{spanning}
If $\phi_{\unl{d}'}(F)=0$ for all $\unl{d}'\in T_{\unl{k}}$ such that
$\unl{d}'>\unl{d}$, then there exists an element $F_{\unl{d}}\in M$
such that $\phi_{\unl{d}}(F)=\phi_{\unl{d}}(F_{\unl{d}})$ and
$\phi_{\unl{d}'}(F_{\unl{d}})=0$ for all $\unl{d}'>\unl{d}$.
\end{Lem}

\begin{proof}[Proof of Lemma~\ref{spanning}]
Consider the following total ordering on the set
$\{(\beta,s)\}_{\beta\in \Delta^+}^{1\leq s\leq d_\beta}$:
\begin{equation}\label{ordering of pairs}
  (\beta,s)\leq(\beta',s')\ \mathrm{iff}\
  \beta<\beta'\ \mathrm{or}\ \beta=\beta',s\leq s'.
\end{equation}

First, we note that the wheel
conditions~(\ref{wheel condition 1 super yangian},~\ref{wheel condition 2 super yangian})
for $F$ guarantee that $\phi_{\unl{d}}(F)$ (which is a polynomial in $\{y_{\beta,s}\}$)
vanishes up to appropriate orders under the following specializations:
\begin{enumerate}
\item[(i)] $y_{\beta,s}=y_{\beta',s'}+\hbar$ for $(\beta,s)<(\beta',s')$,

\item[(ii)]$y_{\beta,s}=y_{\beta',s'}-\hbar$ for $(\beta,s)<(\beta',s')$.
\end{enumerate}
\noindent
The orders of vanishing are computed similarly to~\cite[Remark 5.24]{t},
cf.~\cite{fhhsy,ne}. Let us view the specialization appearing in the definition
of $\phi_{\unl{d}}$ as a step-by-step specialization in each interval $[\beta]$.
As we specialize the variables in the new interval, we count only those wheel
conditions that arise from the non-specialized yet variables. Varying different
orderings of the intervals, we pick the maximal order of vanishing for each of
the linear terms $y_{\beta,s}-y_{\beta',s'}\pm\hbar$.
We claim that the resulting orders of vanishing under the specializations (i) and (ii)
exactly equal the powers of $y_{\beta,s}-y_{\beta',s'}\mp \hbar$ in $G_{\beta,\beta'}$
(if $\beta<\beta'$) or in $G_\beta$ (if $\beta=\beta'$). More precisely:

-- if $i(\beta)<i(\beta')$, then specializing first in the $s$-th copy of $[\beta]$
and then in the $s'$-th copy of $[\beta']$, the orders of vanishing under (i, ii)
equal the powers of $y_{\beta,s}-y_{\beta',s'}\mp \hbar$ in $G_{\beta,\beta'}$;

-- if $i(\beta)>i(\beta')$, then specializing first in the $s'$-th copy of $[\beta']$
and then in the $s$-th copy of $[\beta]$, the orders of vanishing under (i, ii)
equal the powers of $y_{\beta,s}-y_{\beta',s'}\mp \hbar$ in $G_{\beta,\beta'}$;

-- if $i(\beta)=i(\beta')$ (or $\beta'=\beta$), specializing first in the $s$-th copy
of $[\beta]$ and then in the $s'$-th copy of $[\beta']$, the orders of vanishing
under (i, ii) differ from the powers of $y_{\beta,s}-y_{\beta',s'}\mp \hbar$ in
$G_{\beta,\beta'}$ or $G_\beta$ (if $\beta'=\beta$) by an absence of a single factor
$y_{\beta,s}-y_{\beta',s'}+(-1)^{\ol{i(\beta)+1}}\hbar$. Reversing the order of specializations,
we end up missing only one factor $y_{\beta',s'}-y_{\beta,s}+(-1)^{\ol{i(\beta)+1}}\hbar$.
Hence, picking the maximal order of vanishing for each of $y_{\beta,s}-y_{\beta',s'}\mp \hbar$
achieves the result.


Second, we claim that $\phi_{\unl{d}}(F)$ vanishes under the following specializations:
\begin{enumerate}
\item[(iii)] $y_{\beta,s}=y_{\beta',s'}$ for $(\beta,s)<(\beta',s')$
             such that $j(\beta)<j(\beta')$ and $i(\beta)+1\in [\beta']$.
\end{enumerate}
Indeed, if $j(\beta)<j(\beta')$ and $i(\beta)+1\in [\beta']$, there are positive
roots $\gamma,\gamma'\in \Delta^+$ such that
  $j(\gamma)=j(\beta), i(\gamma)=i(\beta'),
   j(\gamma')=j(\beta'), i(\gamma')=i(\beta)$.
Consider the degree vector $\unl{d}'\in T_{\unl{k}}$ given by
  $d'_{\alpha}=d_{\alpha}+\delta_{\alpha,\gamma}+\delta_{\alpha,\gamma'}-
               \delta_{\alpha,\beta}-\delta_{\alpha,\beta'}$.
Then, $\unl{d}'>\unl{d}$ and thus $\phi_{\unl{d}'}(F)=0$. The result follows.

Finally, we also note that the skew-symmetry of the elements of $W^V$ with
respect to the variables $\{x_{k,\ast}\}$ with $|\alpha_k|=\bar{1}$ implies that
$\phi_{\unl{d}}(F)$ vanishes under the following specializations:

\begin{enumerate}
\item[(iv)] $y_{\beta,s}=y_{\beta',s'}$ for $(\beta,s)<(\beta',s')$ and vanishing
            order is $\#\{k| [\beta]\ni k\in [\beta'], |\alpha_k|=\bar{1}\}$.
\end{enumerate}
For $\beta<\beta'$ and any $s,s'$, combining (iii) and (iv), we see that the order
of vanishing of $\phi_{\unl{d}}(F)$ at $y_{\beta,s}=y_{\beta',s'}$ exactly equals the power of
$y_{\beta,s}-y_{\beta',s'}$ in $G_{\beta,\beta'}$ as computed in~(\ref{vanishing along diagonal}).
Similar, for $\beta'=\beta$ and $1\leq s<s'\leq d_\beta$, combining (iii) and (iv), we see that
the order of vanishing of $\phi_{\unl{d}}(F)$ at $y_{\beta,s}=y_{\beta,s'}$ equals the power of
$y_{\beta,s}-y_{\beta,s'}$ in $G_{\beta}$ of~(\ref{Factor 2}) plus one if $|\beta|=\bar{1}$.

Combining the above vanishing conditions for $\phi_{\unl{d}}(F)$ with $F$
being \emph{good}, we see that $\phi_{\unl{d}}(F)$ is divisible exactly by
the product $\prod_{\beta<\beta'} G_{\beta,\beta'}\cdot \prod_{\beta}G_\beta$
of~(\ref{Factor 1},~\ref{Factor 2}). Therefore, we have
\begin{equation}\label{woops 1}
  \phi_{\unl{d}}(F)=
  \prod_{\beta,\beta'\in \Delta^+}^{\beta<\beta'} G_{\beta,\beta'}\cdot
  \prod_{\beta\in\Delta^+}G_\beta\cdot G
\end{equation}
for some supersymmetric polynomial
\begin{equation}\label{woops 2}
  G\in \BC[\hbar][\{y_{\beta,s}\}_{\beta\in \Delta^+}^{1\leq s\leq d_\beta}]^{\Sigma_{\unl{d}}}
  \simeq \bigotimes_{\beta\in \Delta^+} \BC[\hbar][\{y_{\beta,s}\}_{s=1}^{d_\beta}]^{\Sigma_{d_\beta}},
\end{equation}
where $\BC[\hbar][\{y_{\beta,s}\}_{s=1}^{d_\beta}]^{\Sigma_{d_\beta}}$
denotes the submodule of symmetric (resp.\ skew-symmetric) polynomials in
$\{y_{\beta,s}\}_{s=1}^{d_\beta}$ if $|\beta|=\bar{0}$ (resp.\ $|\beta|=\bar{1}$).

Combining this observation with formula~(\ref{explicit formula for same degrees})
and the discussion after it, we see that there is a linear combination
$F_{\unl{d}}=\sum_{h\in H}^{\deg(h)=\unl{d}} c_h \sx^+_h$ such that
$\phi_{\unl{d}}(F)=\phi_{\unl{d}}(F_{\unl{d}})$, due to our proof of
Lemma~\ref{same degrees}. The equality $\phi_{\unl{d}'}(F_{\unl{d}})=0$
for $\unl{d}'>\unl{d}$ is due to Lemma~\ref{lower degrees}.

This completes our proof of Lemma~\ref{spanning}.
\end{proof}

Let $\unl{d}_{\max}$ and $\unl{d}_{\min}$ denote the maximal and the minimal
elements of $T_{\unl{k}}$, respectively. The condition of Lemma~\ref{spanning}
is vacuous for $\unl{d}=\unl{d}_{\max}$. Therefore, Lemma~\ref{spanning} applies.
Applying it iteratively, we eventually find an element $\wt{F}\in M$ such
that $\phi_{\unl{d}}(F)=\phi_{\unl{d}}(\wt{F})$ for all $\unl{d}\in T_{\unl{k}}$.
In the particular case of $\unl{d}=\unl{d}_{\min}$, this yields $F=\wt{F}$
(as the \emph{specialization map} $\phi_{\unl{d}_{\min}}$ essentially does not
change the function). Hence, $F\in M$.

This completes our proof of Theorem~\ref{hard shuffle yangian}.
\end{proof}

Using the same arguments, let us now prove Theorem~\ref{shuffle integral form yangian}.

\begin{proof}[Proof of Theorem~\ref{shuffle integral form yangian}]
The proof proceeds in two steps: first, we establish the inclusion
$\Psi(\bY^+_\hbar(\ssl(V)))\subseteq \fW^{V}$, and then we verify
the opposite inclusion $\Psi(\bY^+_\hbar(\ssl(V)))\supseteq \fW^{V}$.

\begin{Lem}\label{necessity for Gavarini Yangian}
$\Psi(\bY^+_\hbar(\ssl(V)))\subseteq \fW^{V}$.
\end{Lem}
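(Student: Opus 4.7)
The plan is to reduce the statement to a simple generator-level check by exploiting that $\fW^V$ is a graded $\BC[\hbar]$-subalgebra of $\bar W^V$ under the shuffle product.

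First I would verify that $\fW^V=\bigoplus_{\unl k}\fW^V_{\unl k}$ is closed under $\star$. Since $\star$ is $\BC[\hbar]$-bilinear and sends $\bar W^V_{\unl k}\otimes \bar W^V_{\unl l}$ to $\bar W^V_{\unl k+\unl l}$, it suffices to note that if $F$ is divisible by $\hbar^{|\unl k|}$ and $G$ by $\hbar^{|\unl l|}$ (where $|\unl k|:=k_1+\cdots+k_{n-1}$), then every summand of $F\star G$ is divisible by $\hbar^{|\unl k|+|\unl l|}=\hbar^{|\unl k+\unl l|}$. Hence $F\star G\in \fW^V_{\unl k+\unl l}$. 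So $\fW^V$ is a $\BC[\hbar]$-subalgebra of $\bar W^V$.

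Next, by the very definition of $\bY^+_\hbar(\ssl(V))$ as the $\BC[\hbar]$-subalgebra of $Y^+_\hbar(\ssl(V))$ generated by the normalized root vectors $\{\SX^+_{\beta,r}=\hbar\,\sx^+_{\beta,r}\}_{\beta\in\Delta^+}^{r\in\BN}$, it suffices to show that $\Psi(\SX^+_{\beta,r})\in \fW^V$ for every positive root $\beta$ and every $r\geq 0$. By Lemma~\ref{shuffle root elt}, for $\beta=\alpha_j+\alpha_{j+1}+\cdots+\alpha_i$ we have
\[
  \Psi(\SX^+_{\beta,r}) \;=\; \hbar\cdot\Psi(\sx^+_{\beta,r})
  \;=\; \hbar^{\,i-j+1}\cdot
  \frac{p(x_{j,1},\ldots,x_{i,1})}{(x_{j,1}-x_{j+1,1})\cdots(x_{i-1,1}-x_{i,1})}
\]
up to sign, with $p$ a monomial of degree $r$. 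The grading of this element is $\unl k=[\beta]$, so $|\unl k|=i-j+1$, which matches the $\hbar$-power in the displayed formula. Consequently $\Psi(\SX^+_{\beta,r})\in \fW^V_{[\beta]}$.

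Combining the two steps, $\Psi$ sends every generator of $\bY^+_\hbar(\ssl(V))$ into the subalgebra $\fW^V\subset \bar W^V$, and hence sends the entire subalgebra $\bY^+_\hbar(\ssl(V))$ into $\fW^V$. There is essentially no obstacle beyond bookkeeping: the only thing that could go wrong is if the $\hbar$-power of $\Psi(\sx^+_{\beta,r})$ were less than $i(\beta)-j(\beta)$, but this is precisely the content of Lemma~\ref{shuffle root elt} and accounts exactly for the grading $|[\beta]|=i(\beta)-j(\beta)+1$ after the extra factor of $\hbar$ in $\SX^+_{\beta,r}$.
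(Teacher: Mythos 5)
Your proposal is correct and is essentially the paper's proof: both rely on Lemma~\ref{shuffle root elt} to show that the generator $\Psi(\SX^+_{\beta,r})=\hbar\,\Psi(\sx^+_{\beta,r})$ is divisible by $\hbar^{i(\beta)-j(\beta)+1}$, and both exploit that $\hbar$-divisibility is additive under the shuffle product while the $\BN^I$-grading adds. The only cosmetic difference is that you isolate the closure of $\fW^V$ under $\star$ as a separate preliminary step, whereas the paper encodes the same fact via the counting identity $\sum_\beta d_\beta(i(\beta)-j(\beta)+1)=\sum_i l_i$.
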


\begin{proof}
According to Lemma~\ref{shuffle root elt}, $\Psi(\sX^+_{\beta,r})$
is divisible by $\hbar^{i(\beta)-j(\beta)+1}$. It remains to note that
\begin{equation}\label{shifted hbar as degree}
  \sum_{\beta\in \Delta^+} d_\beta(i(\beta)-j(\beta)+1)=\sum_{i\in I} l_i,
\end{equation}
where $\unl{l}=(l_1,\ldots,l_{n-1})\in \BN^I$ is defined via
$(l_1,\ldots,l_{n-1}):=\sum_{\beta\in \Delta^+} d_\beta[\beta]$.
\end{proof}

The proof of the opposite inclusion $\Psi(\bY^+_\hbar(\ssl(V)))\supseteq \fW^{V}$
is completely analogous to our proof of Theorem~\ref{hard shuffle yangian} and
Lemma~\ref{spanning}. Indeed, it suffices to note that the factor
$\hbar^{d_\beta(i(\beta)-j(\beta))}$ in the definition of $G_\beta$~(\ref{Factor 2})
shall be replaced by $\hbar^{d_\beta(i(\beta)-j(\beta)+1)}$, which
does not affect~(\ref{woops 1}), due to~(\ref{shifted hbar as degree})
(as we replaced ``$F$ being \emph{good}'' by ``$F$ being \emph{integral}'').
\end{proof}

We conclude this section with a new proof of Theorem~\ref{pbw for gavarini yangian}.

\begin{proof}[Proof of Theorem~\ref{pbw for gavarini yangian}]
(a) As $\Psi\colon Y^+_\hbar(\ssl(V))\to W^V$ is injective and the image
of $\bY^+_\hbar(\ssl(V))$, the submodule $\fW^V$, is independent of any choices
of $\sx^+_{\beta,r}$, Theorem~\ref{pbw for gavarini yangian}(a) follows.

\noindent
(b) Following the proofs of Theorems~\ref{shuffle integral form yangian} and~\ref{hard shuffle yangian},
we have already established that $\fW^V$ is $\BC[\hbar]$-spanned by the images of
the ordered PBW monomials $\{\Psi(\SX^+_h)\}_{h\in H}$. Combining this with
the injectivity of $\Psi$ and Theorem~\ref{PBW for formal Yangian}, completes the proof of
Theorem~\ref{pbw for gavarini yangian}(b).
\end{proof}


\section{Generalization to type $A$ quantum affine superalgebras}
\label{sec shuffle for superquantum}

In this section, we briefly discuss the trigonometric counterparts of the
previous results. The quantum affine superalgebras were first studied $20$
years ago by Yamane~\cite{y}. In the~\emph{loc.cit.}, both the Drinfeld-Jimbo
and the new Drinfeld realizations were proposed and the isomorphism between
those was obtained. Also, the isomorphisms between the Drinfeld-Jimbo quantum
(affine) superalgebras corresponding to different Dynkin diagrams were constructed.

\begin{Rem}
Such isomorphisms in the type $A$ toroidal setup, which does not admit the
Drinfeld-Jimbo realization, have been recently constructed in~\cite{bm}.
\end{Rem}

In this section, we obtain the shuffle algebra realizations of the
``positive halves'' of the quantum affine superalgebras of $\gl(V)$
corresponding to different Dynkin diagrams of $\gl(V)$.

Let $\vv$ be a formal variable. Define the ``positive half'' of the quantum
loop superalgebra of $\gl(V)$, denoted by $U^>_{\vv}(L\gl(V))$, to be the associative
$\BC(\vv)$-superalgebra generated by $\{e_{i,r}\}_{i\in I}^{r\in \BZ}$ with the
$\BZ_2$-grading $|e_{i,r}|=|\alpha_i|$, and subject to the following defining relations:
\begin{equation}\label{quantum 1}
  (z-\vv^{c_{ij}}w)e_i(z)e_j(w)=
  (-1)^{|\alpha_i|\cdot |\alpha_j|} (\vv^{c_{ij}}z-w)e_j(w)e_i(z),
\end{equation}
\begin{equation}\label{quantum 2}
  [e_i(z),e_j(w)]=0 \ \mathrm{if}\ c_{ij}=0,
\end{equation}
as well as cubic $\vv$-Serre relations
\begin{equation}\label{quantum 3}
  [e_i(z_1),[e_i(z_2),e_j(w)]_{\vv^{-1}}]_\vv+
  [e_i(z_2),[e_i(z_1),e_j(w)]_{\vv^{-1}}]_\vv=0
  \ \mathrm{if}\ j=i\pm 1\ \mathrm{and}\ |\alpha_i|=\bar{0},
\end{equation}
and quartic $\vv$-Serre relations
\begin{equation}\label{quantum 4}
\begin{split}
  & [[[e_{i-1}(w),e_i(z_1)]_{\vv^{-1}},e_{i+1}(u)]_\vv,e_i(z_2)] +
    [[[e_{i-1}(w),e_i(z_2)]_{\vv^{-1}},e_{i+1}(u)]_\vv,e_i(z_1)]=0\\
  & \ \ \ \ \ \ \ \ \ \ \ \ \ \ \ \ \ \ \ \ \ \ \ \ \ \ \ \ \ \ \ \ \ \ \
    \ \ \ \ \ \ \ \ \ \ \ \ \ \ \ \ \ \ \ \ \ \ \ \ \ \ \
    \ \mathrm{if}\ |\alpha_i|=\bar{1}
    \ \mathrm{and}\ |\alpha_{i-1}|=|\alpha_{i+1}|=\bar{0},
\end{split}
\end{equation}
where $e_i(z):=\sum_{r\in \BZ} e_{i,r}z^{-r}$ and
$[a,b]_x:=ab-(-1)^{|a|\cdot |b|}x\cdot ba$ for homogeneous $a$ and $b$.

\begin{Rem}\label{v-Serre hold always}
The superalgebra $U^>_{\vv}(L\gl(V))$ is $\BN^I$-graded via
$\deg(e_{i,r})=1_i=(0,\ldots,1,\ldots,0)$ with $1$ at the $i$-th spot.
Given elements $a,b\in U^>_{\vv}(L\gl(V))$ with $\deg(a)=\unl{k}$ and
$\deg(b)=\unl{l}$, we set $(a,b):=\sum_{i,j\in I} k_il_jc_{ij}$.
Following~\cite[\S6.7]{y}, we define
  $[\![a,b]\!]:=ab-(-1)^{|a|\cdot |b|}\vv^{(a,b)}\cdot ba$.

\noindent
(a) The cubic $\vv$-Serre relations~(\ref{quantum 3}) can be written in the form
\begin{equation}\label{quantum 3 general}
  [\![e_i(z_1),[\![e_i(z_2),e_j(w)]\!]]\!] +
  [\![e_i(z_2),[\![e_i(z_1),e_j(w)]\!]]\!]=0.
\end{equation}
The relation~(\ref{quantum 3 general}) is also valid for $|\alpha_i|=\bar{1}$,
but in that case, it already follows from~(\ref{quantum 2}).

\noindent
(b) The quartic $\vv$-Serre relations~(\ref{quantum 4}) can be written in the form
\begin{equation}\label{quantum 4 general}
  [\![[\![[\![e_{i-1}(w),e_i(z_1)]\!],e_{i+1}(u)]\!],e_i(z_2)]\!] +
  [\![[\![[\![e_{i-1}(w),e_i(z_2)]\!],e_{i+1}(u)]\!],e_i(z_1)]\!]=0.
\end{equation}
The relation~(\ref{quantum 4 general}) is also valid for any other parities
of $\alpha_{i-1},\alpha_i,\alpha_{i+1}$, but in those cases, it already follows
from the quadratic and cubic relations~(\ref{quantum 1}--\ref{quantum 3}).
%

\noindent
(c) We finally note that~(\ref{quantum 4 general}) may be replaced by
the following equivalent relations:
\begin{equation}\label{quantum 4 general flv}
  [\![[\![e_{i-1}(w),e_i(z_1)]\!],[\![e_{i+1}(u),e_i(z_2)]\!]]\!] +
  [\![[\![e_{i-1}(w),e_i(z_2)]\!],[\![e_{i+1}(u),e_i(z_1)]\!]]\!] = 0.
\end{equation}
%
%
This is an affinization of the quartic $\vv$-Serre relations
of~\cite{flv} for finite quantum superalgebras.
\end{Rem}

Let us now define the trigonometric shuffle (super)algebra $\left(S^V,\star\right)$
analogously to the rational shuffle (super)algebra $\left(\bar{W}^V,\star\right)$ of
Section~\ref{ssec super shuffle algebra} with the following modifications:

\noindent
(1) All rational functions $F\in S^V$ are defined over $\BC(\vv)$;

\noindent
(2) The analogue of~(\ref{zeta-factor}) is the matrix
  $(\zeta_{i,j}(z))_{i,j\in I} \in \mathrm{Mat}_{I\times I}(\BC(\vv)(z))$
defined via
\begin{equation}\label{zeta quantum}
  \zeta_{i,j}(z)=
  (-1)^{\delta_{i>j}\delta_{|\alpha_i|,\bar{1}}\delta_{|\alpha_j|,\bar{1}}}
  (z-\vv^{-c_{ij}})/(z-1);
\end{equation}

\noindent
(3) The \emph{shuffle product} is defined via~(\ref{shuffle product}) with
$\zeta_{i,i'}(x_{i,r}-x_{i',r'})$ replaced by $\zeta_{i,i'}(x_{i,r}/x_{i',r'})$;

\noindent
(4) The \emph{pole conditions}~(\ref{pole conditions super yangian}) for
$F\in S^V_{\unl{k}}$ are modified as follows:
\begin{equation}\label{pole conditions yangian}
  F=
  \frac{f(x_{1,1},\ldots,x_{n-1,k_{n-1}})}
       {\prod_{i=1}^{n-2}\prod_{r\leq k_i}^{r'\leq k_{i+1}}(x_{i,r}-x_{i+1,r'})},
\end{equation}
where
  $f\in (\BC(\vv)[\{x^{\pm 1}_{i,r}\}_{i\in I}^{1\leq r\leq k_i}])^{\Sigma_{\unl{k}}}$
is a supersymmetric Laurent polynomial;

\noindent
(5) The \emph{first kind wheel conditions}~(\ref{wheel condition 1 super yangian})
for $F\in S^V$ are modified as follows:
\begin{equation}\label{wheel conditions quantum 1}
  F(\{x_{i,r}\})=0\ \mathrm{once}\
  x_{i,r_1}=\vv x_{i+\epsilon,s}=\vv^2x_{i,r_2}\
  \mathrm{for\ some}\ \epsilon, i, r_1, r_2, s,
\end{equation}
with $|\alpha_i|=\bar{0}$;

\noindent
(6) The \emph{second kind wheel conditions}~(\ref{wheel condition 2 super yangian})
for $F\in S^V$ are modified as follows:
\begin{equation}\label{wheel conditions quantum 2}
  F(\{x_{i,r}\})=0\ \mathrm{once}\ x_{i-1,s}=\vv x_{i,r_1}=x_{i+1,s'}=\vv^{-1} x_{i,r_2}
  \ \mathrm{for\ some}\  i, r_1,r_2,s,s',
\end{equation}
with $|\alpha_i|=\bar{1}$.

\medskip

The following is the main result of this section (announced in~\cite[\S8.2]{t}),
generalizing~\cite[Theorem 5.17]{t} for the particular case of~(\ref{distinguished Dynkin}):

\begin{Thm}\label{quantum shuffle isom}
The assignment $e_{i,r}\mapsto x_{i,1}^r\ (i\in I,r\in \BZ)$ gives rise
to an algebra isomorphism
\begin{equation}\label{q-Psi}
  \Psi\colon U^>_\vv(L\gl(V))\iso S^V.
\end{equation}
\end{Thm}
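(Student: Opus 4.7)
The plan is to follow the same three-step strategy as in the rational case (Theorem~\ref{hard shuffle yangian}), suitably adapted to the trigonometric setting, but with the technical simplification that no ``good'' condition is needed since $U^>_\vv(L\gl(V))$ is generated by $\{e_{i,r}\}_{r\in \BZ}$ rather than $\{\sx^+_{i,r}\}_{r\geq 0}$.

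First, I would verify that $\Psi$ is well-defined by checking compatibility of the assignment $e_{i,r}\mapsto x_{i,1}^{r}$ with each of the defining relations~\eqref{quantum 1}--\eqref{quantum 4}. Relations~\eqref{quantum 1},~\eqref{quantum 2} reduce to immediate computations using the $\zeta$-factor~\eqref{zeta quantum}; the cubic $\vv$-Serre relation~\eqref{quantum 3} follows from the rank-one symmetrization identity already used in~\cite[\S5]{t}; the quartic $\vv$-Serre relation~\eqref{quantum 4} (equivalently~\eqref{quantum 4 general flv}) is verified by a direct shuffle computation, entirely parallel to~\eqref{elegant deduction of higher Serre} in the rational case. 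A trigonometric PBW theorem for $U^>_\vv(L\gl(V))$ (which can be derived along the lines of~\cite{y} together with an induction on the height of roots analogous to Lemma~\ref{shuffle root elt}) produces ordered PBW monomials $e_h$ indexed by $H' = \{h\colon \Delta^+\times \BZ\to \BN\mid \text{finite support},\ h(\beta,r)\leq 1\ \text{if}\ |\beta|=\bar{1}\}$ after choosing PBW root vectors $e_{\beta,r}$ via iterated $\vv$-commutators of the $e_{i,s}$.

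Second, for injectivity I would introduce the trigonometric specialization maps $\phi_{\unl{d}}\colon S^{V}_{\unl{l}} \to \BC(\vv)[\{y^{\pm 1}_{\beta,s}\}]$: decompose $\unl{l}=\sum_{\beta\in \Delta^+}d_\beta[\beta]$, split variables into groups indexed by the copies of the intervals $[\beta]$, and specialize $x_{k,\ast}$ in the $s$-th copy of $[\beta]$ to $\vv^{c_{12}+\ldots+c_{k-1,k}}y_{\beta,s}$. The trigonometric analogue of Lemma~\ref{lower degrees}, that $\phi_{\unl{d}}(\Psi(e_h))=0$ whenever $\deg(h)<\unl{d}$, follows because every such summand contains a vanishing $\zeta$-factor $\zeta_{i,i+1}(\vv^{-c_{i,i+1}})=0$. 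The analogue of Lemma~\ref{same degrees} is proved by computing $\phi_{\unl{d}}(\Psi(e_h))$ in the factored form \eqref{explicit formula for same degrees}, with $G_{\beta,\beta'}$, $G_\beta$, $G_\beta^{(\sigma_\beta)}$ replaced by their multiplicative trigonometric avatars, and then reducing linear independence to the rank-one case, which is handled by trigonometric analogues of Lemmas~\ref{rank 1 odd} and~\ref{rank 1 even} (the latter being precisely~\cite[Theorem~5.17]{t} for $\dim(V)=2$, established there in both parity cases). Injectivity of $\Psi$ then follows from the maximal-degree argument of Corollary~\ref{Injectivity of Psi}.

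Third, for surjectivity I would prove the trigonometric counterpart of Lemma~\ref{spanning}: given $F\in S^V_{\unl{k}}$ with $\phi_{\unl{d}'}(F)=0$ for all $\unl{d}'>\unl{d}$, produce $F_{\unl{d}}\in \Psi(U^>_\vv(L\gl(V)))$ with $\phi_{\unl{d}}(F)=\phi_{\unl{d}}(F_{\unl{d}})$. The order-of-vanishing analysis proceeds exactly as in the rational case: the two wheel conditions~\eqref{wheel conditions quantum 1},~\eqref{wheel conditions quantum 2} force $\phi_{\unl{d}}(F)$ to vanish to the prescribed order along the hypersurfaces $y_{\beta,s}=\vv^{\pm 2}y_{\beta',s'}$, the degree-vector maximality forces vanishing along the ``diagonal'' hypersurfaces $y_{\beta,s}=y_{\beta',s'}$ for mismatched intervals, and supersymmetry handles the odd-parity diagonals. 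Together with the pole condition~\eqref{pole conditions yangian}, this gives a factorization $\phi_{\unl{d}}(F)=\prod G_{\beta,\beta'}\cdot \prod G_\beta \cdot G$ with $G$ a supersymmetric Laurent polynomial in $\{y_{\beta,s}\}$; the rank-one isomorphism at each $\beta$ then realizes $G$ as a shuffle product, yielding $F_{\unl{d}}$. Iterating over $T_{\unl{k}}$ from $\unl{d}_{\max}$ down to $\unl{d}_{\min}$ as in the proof of Theorem~\ref{hard shuffle yangian} gives $F\in \Psi(U^>_\vv(L\gl(V)))$. The main obstacle I anticipate is the bookkeeping of the precise orders of vanishing in step three, particularly the interaction between the \emph{second kind} wheel condition~\eqref{wheel conditions quantum 2} and the odd roots $\beta$ with $|\beta|=\bar{1}$, which produces the extra diagonal vanishing that was the trickiest input to Lemma~\ref{spanning} in the rational case.
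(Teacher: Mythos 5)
Your proposal is correct and is precisely the ``completely analogous'' argument the paper has in mind: the three steps (well-definedness via the defining relations, injectivity via trigonometric specialization maps together with the lower-degree and same-degree lemmas, surjectivity via the spanning lemma iterated over $T_{\unl{k}}$) all carry over, and your multiplicative specialization $x_{k,*}\mapsto \vv^{c_{12}+\ldots+c_{k-1,k}}y_{\beta,s}$ is the right trigonometric analogue since the $\zeta$-factor~\eqref{zeta quantum} vanishes at $z=\vv^{-c_{ij}}$. One small correction to the framing: the absence of a ``good'' condition is not caused by $r$ ranging over $\BZ$ rather than $\BN$, but by the fact that $U^>_\vv(L\gl(V))$ is an algebra over the field $\BC(\vv)$, so the $\hbar$-divisibility constraint of Definition~\ref{good element yangian} has no analogue here (it would resurface over $\BC[\vv,\vv^{-1}]$, which is exactly the setting of the integral forms mentioned in the Remark following the theorem).
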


The proof of this theorem is completely analogous to our proof of
Theorem~\ref{hard shuffle yangian}.

\begin{Rem}
We note that~\cite[Theorems 3.34, 8.8]{t} providing the shuffle algebra realizations
of the RTT and Lusztig/Grojnowski/Chari-Pressley integral forms of $U^>_\vv(L\gl_n)$
can be straightforwardly generalized to the case of $U^>_\vv(L\gl(V))$. The former
has potential applications to the geometric representation theory
(cf.~\cite[Proposition 4.12, Remark 4.16]{ft2} for $n_-=0$).
\end{Rem}


\end{document}